\documentclass[12pt,a4paper,reqno]{amsart}

\usepackage[left=2.8cm,right=2.8cm,top=2.8cm,bottom=2.8cm]{geometry}

\usepackage{amsmath,amsfonts,amsthm,amsopn,amssymb,cite,mathrsfs}
\usepackage{cancel,marginnote}

\usepackage{color,enumitem,graphicx}
\usepackage[colorlinks=true,urlcolor=blue,
citecolor=red,linkcolor=blue,linktocpage,pdfpagelabels]{hyperref}
\usepackage[english]{babel}

\usepackage[hyperpageref]{backref}

\makeindex

\numberwithin{equation}{section}
\newtheorem{theorem}{Theorem}[section]
\newtheorem{lemma}{Lemma}[section]

\newtheorem{remark}{Remark}[section]

\newtheorem{definition}{Definition}[section]

\title[Schr\"{o}dinger equation with local and nonlocal operator]
{The Best constant in the Gagliardo-Nirenberg inequality for the mixed local and nonlocal Laplacian}

\subjclass[2010]{35J10; 35J20}
\keywords{Schr\"{o}dinger equation; Local and nonlocal operator; Lieb's translation theorem; Existence.}

\begin{document}
\maketitle

\centerline{\scshape Hichem Hajaiej*}
\medskip
{\footnotesize
\centerline{Department of Mathematics, California State University,}
\centerline{Los Angeles, California, USA}
\centerline{hhajaie@calstatela.edu}
}

\medskip

\centerline{\scshape Yu Su}
\medskip
{\footnotesize
\centerline{School of Mathematics and Big Data, Anhui University of Science and Technology,}
\centerline{Huainan, Anhui 232001, China}
\centerline{yusumath@aust.edu.cn}
}

\begin{abstract}
In this paper, we establish the best constant of the Gagliardo-Nirenberg inequality for the mixed local and nonlocal Laplacian.
In the breakthrough paper of Weinstein \cite{Weinstein1983CMP},
he proved the best constant by establishing the relationship between the optimizers of the best constant and the ground state solutions of the underlying equation.
To achieve this goal, the Poho\v{z}aev identity plays a key role.
In our problem, due to the appearance of a local and a nonlocal operator and the loss of the $L^{2}$ term,
it is hard to get the regularity result.
It doesn't seem reasonable to think that all the weak solutions of equation satisfy Poho\v{z}aev identity.
%Hence, it is hard to establish the relationship between the optimizer of the best constant and the ground state solution.
We overcome this difficulty by using the Nehari-Poho\v{z}aev manifold method.
We not only prove that the optimizer of the best constant is the ground state solution of the equation, but we also establish the best constant.
\end{abstract}

\section{Introduction}
In this paper, we establish the best constant of the Gagliardo-Nirenberg inequality involving the Laplacian and the fractional Laplacian, that is, for $N\geqslant3$, $s\in(0,1)$ and $p\in (2^{*}_{s},2^{*})$, we show that there is a positive constant $C_{N,p,s}>0$ such that for all $u\in E$,
\begin{equation}\label{GN}
\begin{aligned}
\int_{\mathbb{R}^{N}}
|u|^{p}
\mathrm{d}x
\leqslant
C_{N,p,s}
\|u\|_{D^{s,2}(\mathbb{R}^{N})}^{\frac{2N-p(N-2)}{2(1-s)}}
\|u\|_{D^{1,2}(\mathbb{R}^{N})}^{\frac{p(N-2s)-2N}{2(1-s)}},
\end{aligned}
\tag{$GN$}
\end{equation}
where $2^{*}=\frac{2N}{N-2}$ is the critical Sobolev exponent, and $2_{s}^{*}=\frac{2N}{N-2s}$ is the  fractional critical Sobolev exponent, and $E$ is a mixed Sobolev space  defined in Section 2.

To state the best constant $C_{N,p,s}$ precisely, we consider the equation
\begin{equation}\label{P}
\begin{aligned}
-\Delta u
+(-\Delta)^{s}u =|u|^{p-2}u, \ \ x\in\mathbb{R}^{N},
\end{aligned}
\tag{$P$}
\end{equation}
where $N\geqslant3$, $s\in(0,1)$, $p\in (2^{*}_{s},2^{*})$,
and $(-\Delta)^{s}$ is the so-called fractional
Laplacian, see \cite{Caffarelli-Silvestre2007PDE}.
For $p=2$ and on a bounded domain in $\mathbb{R}^{N}$,
equation \eqref{P} arises in the population dynamics model with both classical and nonlocal diffusion introduced by
Dipierro-Lippi-Valdinocci \cite{Dipierro-Lippi-Valdinocci2022AIHP}.
Moreover, Biagi-Dipierro-Valdinoci-Vecchi \cite{Biagi-Dipierro-Valdinoci-Vecchi2021CPDE} applied it to study the different types of ``regional" or ``global" restrictions that may reduce the spreading of a pandemic disease.
Dipierro-Valdinocci \cite{Dipierro-Valdinocci2021PA} introduced the description of an ecological niche for a mixed local and nonlocal dispersal.
Cluni-Gusella-Mugnai-Lippi-Pucci
\cite{Cluni-Gusella-Mugnai-ProiettiLippi-Pucci2023ME}
proposed a model describing the nonlocal behavior of an elastic
body using a peridynamical approach.
Hence, mixed operators have recently received a great attention due to their numerous and important applications. Many aspects were addressed including regularity theory
\cite{Biagi-Dipierro-Valdinoci-Vecchi2021CPDE,Su-Valdinoci-Wei-Zhang2022MZ,DeFilippis-Mingione2024MA,Byun-Kumar-Lee2024CVPDE};
existence and non-existence
results
\cite{Biagi-Mugnai-Vecchi2022CCM,Chergui-Gou-Hajaiej2023CVPDE,Salort-Vecchi2022DIE,LuoTJ-Hajaiej2022ANS,Maione-Mugnai-Vecchi2023FCAA};
eigenvalue problems \cite{Dipierro-Proietti Lippi-Valdinoci2022AA,Del Pezzo-Ferreira-Rossi2019FCAA,ChenHY-Bhakta-Hajaiej2022JDE};
shape optimization and calculus of
variations \cite{Biagi-Dipierro-Valdinoci-Vecchi2023JAM}. New features arise for the mixed local and nonlocal operators, see \cite{Su-Hajaiej-Shi2026, Su-Hajaiej2026} for more details.

In this paper, we study the best constant $C_{N,p,s}$ of inequality \eqref{GN} of the mixed operators type equation \eqref{P}.

\subsection{Gagliardo-Nirenberg inequalities: Historical background}

$~$\\
 In 1914, Landau \cite{Landau1914PLMS} showed the inequality
\begin{equation*}
\begin{aligned}
\|u'\|_{L^{\infty}(\mathbb{R})}
\leqslant
C
\|u''\|_{L^{\infty}(\mathbb{R})}^{\frac{1}{2}}
\|u\|_{L^{\infty}(\mathbb{R})}^{\frac{1}{2}}.
\end{aligned}
\end{equation*}
In 1949, Kolmogoroff
\cite{Kolmogoroff1949} extended it for higher-order derivatives.
In 1957, Stein
\cite{Stein1957Annals}
improved Landau's result  for $p\in[1,\infty)$ as
\begin{equation*}
\begin{aligned}
\|u'\|_{L^{p}(\mathbb{R})}
\leqslant
C
\|u''\|_{L^{p}(\mathbb{R})}^{\frac{1}{2}}
\|u\|_{L^{p}(\mathbb{R})}^{\frac{1}{2}}.
\end{aligned}
\end{equation*}
In 1958, Nash \cite{Nash1958AJM} and Ladyzenskaja \cite{Ladyzenskaja1958} independently proved, for $N=2$,
\begin{equation*}
\begin{aligned}
\|u\|_{L^{4}(\mathbb{R}^{N})}
\leqslant
C
\|u\|_{D^{1,2}(\mathbb{R}^{N})}^{\frac{1}{2}}
\|u\|_{L^{2}(\mathbb{R}^{N})}^{\frac{1}{2}}.
\end{aligned}
\end{equation*}
In 1959, Gagliardo \cite{Gagliardo1959RDM} and Nirenberg \cite{Nirenberg1959Pisa} independently showed the Gagliardo-Nirenberg inequality
\begin{equation*}
\begin{aligned}
\|u\|_{D^{j,p}(\mathbb{R}^{N})}
\leqslant
C
\|u\|_{D^{k,r}(\mathbb{R}^{N})}^{\theta}
\|u\|_{L^{q}(\mathbb{R}^{N})}^{1-\theta},
\end{aligned}
\end{equation*}
where
\begin{equation*}
\begin{aligned}
\begin{cases}
0\leqslant j<k,~~j,k\in \mathbb{N},\\
\theta=\frac{j}{k},\\
\frac{k}{p}=\frac{j}{r}+\frac{k-j}{q}.
\end{cases}
\end{aligned}
\end{equation*}
In 2018, Brézis-Mironescu \cite{Brezis-Mironescu2018Poincare} improved the Gagliardo-Nirenberg's result for $j,k\geqslant0$.
For more related works, we refer the reader to
\cite{Brezis-Mironescu2019JFA,Fila-Winkler2019AM,DelPino-Dolbeault2002JMPA,CorderoErausquin-Nazaret-Villani2004AM,Agueh2008NODEA,Hajaiej-Molinet-Ozawa-Wang2011}.

\subsection{Best constant of the Gagliardo-Nirenberg inequality for the Laplacian}

$~$\\
If we replace $\|u\|_{D^{s,2}(\mathbb{R}^{N})}$ by $\|u\|_{L^{2}(\mathbb{R}^{N})}$, then inequality \eqref{GN} reads, for all $u\in H^{1}(\mathbb{R}^{N})$ and $C_{1}>0$ is the best constant:
\begin{equation}\label{1.1}
\begin{aligned}
\|u\|_{L^{p}(\mathbb{R}^{N})}^{p}
\leqslant
C_{1}
\|u\|_{D^{1,2}(\mathbb{R}^{N})}^{\frac{N(p-2)}{2}}
\|u\|_{L^{2}(\mathbb{R}^{N})}^{p-\frac{N(p-2)}{2}}.
\end{aligned}
\end{equation}
For $N=1$, $p\in(2,2^{*})$ and $2^{*}=\infty$, Nagy \cite{Nagy1941} established inequality \eqref{1.1} and the best constant
\begin{equation*}
\begin{aligned}
C_{1}=
\left(\frac{2}{p}\|\psi\|_{L^{2}(\mathbb{R})}^{p-2}\right)^{-\frac{1}{p}},
\end{aligned}
\end{equation*}
where
$\psi(x)=\left(\frac{p}{2}\right)^{\frac{1}{p-2}}
\frac{1}{[\cosh(\frac{p-2}{2}x)]^{\frac{2}{p-2}}}$ is the unique positive even solution in $C^{2}(\mathbb{R})$ of the equation, see \cite[Proposition 5.13]{Frank-Lenzmann2013Acta},
\begin{equation}\label{S1}
\begin{aligned}
-\Delta u+u =|u|^{p-2}u, \ \ x\in\mathbb{R}.
\end{aligned}
\tag{$S_{1}$}
\end{equation}

For $N\geqslant2$, $p\in(2,2^{*})$ and $2^{*}=\frac{2N}{N-2}$,
Weinstein \cite[Theorem B]{Weinstein1983CMP} established the best constant $C_{1}$. His method consists of four steps:

\begin{enumerate}
\item [Step 1.]
By introducing Weinstein functional, using symmetric decreasing rearrangement and compactness Lemma, he showd the existence of optimal functions for inequality \eqref{1.1};

\item [Step 2.]
From the minimizing problem of Weinstein functional to a Euler-Lagrange equation, he proved that there is a optimal function $u\in H^{1}(\mathbb{R}^{N})$ which is also a weak solution of the following equation
\begin{equation}\label{SN}
\begin{aligned}
-\Delta u+u =|u|^{p-2}u, \ \ x\in\mathbb{R}^{N}.
\end{aligned}
\tag{$S_{N}$}
\end{equation}

\item [Step 3.]
By applying the uniqueness of decaying positive solutions of equation \eqref{SN} \cite{Serrin-M.X.Tang2000IUMJ} and Poho\v{z}aev identity of equation \eqref{SN} \cite{Berestycki-Lions1983ARMA}, he investigated that there is a radial positive optimal function which is also a radial positive ground state solution of equation \eqref{SN}.

\item [Step 4.]
Finally,
he concluded the best constant as
\begin{equation*}
\begin{aligned}
C_{1}=
\left(\frac{2}{p}\|\psi\|_{L^{2}(\mathbb{R}^{N})}^{p-2}\right)^{-\frac{1}{p}}
=c^{-\frac{1}{p}},
\end{aligned}
\end{equation*}
where $\psi$ is the unique radial positive ground state solution of equation \eqref{SN}, and $c=\frac{2}{p}\|\psi\|_{L^{2}(\mathbb{R}^{N})}^{p-2}$ is the ground state energy of equation \eqref{SN} on $H^{1}(\mathbb{R}^{N})$.
\end{enumerate}

\subsection{Best constant of the Gagliardo-Nirenberg inequality for the fractional Laplacian}

$~$\\
If one replaces $\|u\|_{D^{1,2}(\mathbb{R}^{N})}$ by $\|u\|_{L^{2}(\mathbb{R}^{N})}$, then inequality \eqref{GN} reads, for all $u\in H^{s}(\mathbb{R}^{N})$ and $C_{2}>0$ is the best constant:
\begin{equation}\label{1.2}
\begin{aligned}
\|u\|_{L^{p}(\mathbb{R}^{N})}^{p}
\leqslant
C_{2}
\|u\|_{D^{s,2}(\mathbb{R}^{N})}^{\frac{N(p-2)}{2s}}
\|u\|_{L^{2}(\mathbb{R}^{N})}^{p-\frac{N(p-2)}{2s}}.
\end{aligned}
\end{equation}
\eqref{1.2} was first established in \cite{Hajaiej-Molinet-Ozawa-Wang2011}. For $N=1$, $s=\frac{1}{2}$ and $p=4$, Amick-Toland \cite{Amick-Toland1991Acta} considered the following equation
\begin{equation}\label{FS1}
\begin{aligned}
(-\Delta)^{\frac{1}{2}} u+u =|u|^{2}u, \ \ x\in\mathbb{R}.
\end{aligned}
\tag{$FS_{1,4}$}
\end{equation}
They showed that the unique positive ground state solution of equation \eqref{FS1} is
\begin{equation*}
\begin{aligned}
\psi(x)=\frac{1}{1+x^{2}}\in H^{\frac{1}{2}}(\mathbb{R}).
\end{aligned}
\end{equation*}
Then mimicking the method of Weinstein, one can show the optimizers and determine the best constant for inequality \eqref{1.2} directly by the uniqueness of the ground states, the rearrangement inequalities and Poho\v{z}aev identity.

Frank-Lenzemann \cite[Theorem 2.4]{Frank-Lenzmann2013Acta} established the non-degeneracy and uniqueness of positive ground state solution of
\begin{equation}\label{FS2}
\begin{aligned}
(-\Delta)^{s} u+u =|u|^{p-2}u, \ \ x\in\mathbb{R},
\end{aligned}
\tag{$FS_{1,p}$}
\end{equation}
where
\begin{equation*}
\begin{aligned}
\begin{cases}
N=1,~~s\in(0,\frac{1}{2}),&p\in(2,2_{s}^{*}),~~\mathrm{where}~~2_{s}^{*}=\frac{4s}{1-2s},\\
N=1,~~s\in[\frac{1}{2},1),&p\in(2,2_{s}^{*}),~~\mathrm{where}~~2_{s}^{*}=+\infty.
\end{cases}
\end{aligned}
\end{equation*}
By using the uniqueness of the ground states, Poho\v{z}aev identity and the method of Weinstein, one can show that the optimizers and the best constant for inequality \eqref{1.2}.
Moreover, they \cite[Lemma A.5]{Frank-Lenzmann2013Acta} obtained the uniform bound for the best constant of inequality \eqref{1.2} under
\begin{equation*}
\begin{aligned}
\begin{cases}
N=1,~~s\in[\frac{p-2}{2p},\frac{1}{2}),&p\in(2,2_{s}^{*}),~~\mathrm{where}~~2_{s}^{*}=\frac{4s}{1-2s},\\
N=1,~~s\in[\frac{1}{2},1),&p\in(2,2_{s}^{*}),~~\mathrm{where}~~2_{s}^{*}=+\infty.
\end{cases}
\end{aligned}
\end{equation*}
For $N\geqslant 2$, $s\in(0,1)$, $p\in(2,2_{s}^{*})$ and $2_{s}^{*}=\frac{2N}{N-2}$,
Frank-Lenzmann-Silvestre
\cite{Frank-Lenzmann-Silvestre2016CPAM} considered the non-degeneracy and the uniqueness of positive ground state solutions of
\begin{equation}\label{FS3}
\begin{aligned}
(-\Delta)^{s} u+u =|u|^{p-2}u, \ \ x\in\mathbb{R}^{N}.
\end{aligned}
\tag{$FS_{N,p}$}
\end{equation}
Similar to the proof of Weinstein and using the uniqueness of the ground state and the Poho\v{z}aev identity, one deduces the optimizers and the best constant for inequality \eqref{1.2}.

\subsection{Best constant of the Gagliardo-Nirenberg inequality with $L^{p}$ norm}

$~$\\
If one replaces $\|u\|_{D^{s,2}(\mathbb{R}^{N})}$ by $\|u\|_{L^{\frac{p}{2}+1}(\mathbb{R}^{N})}$, then inequality \eqref{GN} is, for all $u\in D^{1,2}(\mathbb{R}^{N})\cap L^{\frac{p}{2}+1}(\mathbb{R}^{N})\cap L^{p}(\mathbb{R}^{N})$ and $C_{3}>0$ is the best constant,
\begin{equation}\label{1.3}
\begin{aligned}
\|u\|_{L^{p}(\mathbb{R}^{N})}
\leqslant
C_{3}
\|u\|_{D^{1,2}(\mathbb{R}^{N})}^{\theta}
\|u\|_{L^{\frac{p}{2}+1}(\mathbb{R}^{N})}^{1-\theta},
\end{aligned}
\end{equation}
where
\begin{equation*}
\begin{aligned}
\theta=\frac{N(\frac{p}{2}-1)}{\frac{p}{2}(N+2-(N-2)\frac{p}{2})},~~p\in(2,2^{*}].
\end{aligned}
\end{equation*}
The optimizer of inequality \eqref{1.3} is related to the ground state solution of the equation
\begin{equation*}
\begin{aligned}
-\Delta u+|u|^{\frac{p}{2}-1}u
=|u|^{p-2}u, \ \ x\in\mathbb{R}^{N}.
\end{aligned}
\end{equation*}
By using minimizing method, the uniqueness of the ground state and the Poho\v{z}aev identity,
Del Pino-Dolbeault
\cite[Theorem 1]{DelPino-Dolbeault2002JMPA} proved the best constant and the optimizer of inequality \eqref{1.3} as
\begin{equation*}
\begin{aligned}
C_{3}
=
\left(
\frac{Y(\frac{p}{2}-1)^{2}}{2\pi N}
\right)^{\frac{\theta}{2}}
\left(
\frac{2Y-N}{2Y}
\right)^{\frac{1}{p}}
\left(
\frac{\Gamma(Y)}{\Gamma(Y-\frac{N}{2})}
\right)^{\frac{\theta}{N}},
~~
Y=\frac{p+2}{p-2},
\end{aligned}
\end{equation*}
and
\begin{equation*}
\begin{aligned}
\psi(x)=\left(\frac{1}{\sigma^{2}+|x-\bar{x}|^{2}}\right)^{\frac{2}{p-2}},~~\sigma>0,~~\bar{x}\in \mathbb{R}^{N}.
\end{aligned}
\end{equation*}
Moreover, Del Pino-Dolbeault\cite[Theorem 2]{DelPino-Dolbeault2002JMPA}
considered the following Gagliardo-Nirenberg inequality
\begin{equation}\label{1.4}
\begin{aligned}
\|u\|_{L^{\frac{p}{2}+1}(\mathbb{R}^{N})}
\leqslant
C_{4}
\|u\|_{D^{1,2}(\mathbb{R}^{N})}^{\theta}
\|u\|_{L^{p}(\mathbb{R}^{N})}^{1-\theta},
\end{aligned}
\end{equation}
where $u\in D^{1,2}(\mathbb{R}^{N})\cap L^{\frac{p}{2}+1}(\mathbb{R}^{N})\cap L^{p}(\mathbb{R}^{N})$ and $C_{4}>0$ is the best constant, and
\begin{equation*}
\begin{aligned}
\theta=\frac{N(1-\frac{p}{2})}{(1+\frac{p}{2})(N-(N-2)\frac{p}{2})},~~p\in(1,2).
\end{aligned}
\end{equation*}
The best constant and optimizer of inequality \eqref{1.4} are
\begin{equation*}
\begin{aligned}
C_{4}
=
\left(
\frac{Y(\frac{p}{2}-1)}{2\pi N}
\right)^{\frac{\theta}{2}}
\left(
\frac{2Y}{2Y+N}
\right)^{\frac{1-\theta}{p}}
\left(
\frac{\Gamma(Y+1+\frac{N}{2})}{\Gamma(Y+1)}
\right)^{\frac{\theta}{N}},
~~
Y=\frac{p+2}{2-p},
\end{aligned}
\end{equation*}
and
\begin{equation*}
\begin{aligned}
\psi(x)=\left(\sigma^{2}-|x-\bar{x}|^{2}\right)_{+}^{\frac{2}{2-p}},~~\sigma>0,~~\bar{x}\in \mathbb{R}^{N}.
\end{aligned}
\end{equation*}
By using the optimal transport techniques, Cordero Erausquin-Nazaret-Villani
\cite{CorderoErausquin-Nazaret-Villani2004AM} extended the results of \cite{DelPino-Dolbeault2002JMPA} from the Laplacian to the $p$-Laplacian by using an innovative method.
Moreover, combining the minimizing method, uniqueness of the ground state , Poho\v{z}aev identity and the optimal transport techniques, Agueh \cite{Agueh2008NODEA}
extended the results of \cite{CorderoErausquin-Nazaret-Villani2004AM}. For more related works, we refer to \cite{Fila-Winkler2019AM}.

\subsection{Our main results}

$~$\\
Based on the results in subsections 1.2-1.4, we know that the uniqueness of the ground state and the Poho\v{z}aev identity play key roles in the proof of the best constant.
In our case, we are facing the following challenges:

\begin{enumerate}
\item [(i)]
In
\cite{Dipierro-SuXF-Valdinoci-ZhangJW2025DCDS,Su-Valdinoci-Wei-Zhang2022MZ,Su-Valdinoci-Wei-Zhang2025JDE},
the authors showed the regularity results for  mixed operators type equation.
However, due to the loss of $L^{2}$ term in equation \eqref{P},
it is hard to apply their method to get the regularity result.
Without that piece of information, we cannot tell whether the weak solutions of equation \eqref{P} satisfy the Poho\v{z}aev identy.

\item [(ii)]
For equation \eqref{P},
it should be remarked that the delicate issue of uniqueness of solutions (modulo symmetries) as well as the non-degeneracy of the associated linearized operator are two open challenging questions.

\item [(iii)]
It is hard to establish the relationship between the optimizer of the best constant
and the ground state solution of equation \eqref{P}.
\end{enumerate}

We will overcome these challenges thanks to an innovative approach that we will describe later. Let us first state our first result:
%From the above, there is a question:

%\centerline{\bf What about the best constant for inequality \eqref{GN}?}

%The motivation of this paper is to answer this question.
%Firstly, we study the existence of optimizer for inequality \eqref{GN}.
\begin{theorem}\label{Theorem1.1}
Let $N\geqslant3$, $s\in(0,1)$ and $p\in (2^{*}_{s},2^{*})$.
Then we have the following results.

\begin{enumerate}
\item [(1)]
For all $u\in E$, we have
\begin{equation*}
\begin{aligned}
\int_{\mathbb{R}^{N}}
|u|^{p}
\mathrm{d}x
\leqslant
C_{N,p,s}
\|u\|_{D^{s,2}(\mathbb{R}^{N})}^{\frac{2N-p(N-2)}{2(1-s)}}
\|u\|_{D^{1,2}(\mathbb{R}^{N})}^{\frac{p(N-2s)-2N}{2(1-s)}},
\end{aligned}
\end{equation*}
where $C_{N,p,s}>0$ (independent of $u$) is the best constant.

\item [(2)]
There exists a radial non-negative optimizer $u\in E$ for inequality \eqref{GN}.

\item [(3)]
Set $Q=\lambda_{1}u(\lambda_{2}x)$, where
\begin{equation*}
\begin{aligned}
\begin{cases}
\lambda_{1}=
\left[
\frac{p(N-2s)-2N}{2N-p(N-2)}
\right]^{\frac{1}{(1-s)(p-2)}}
\left[
\frac{2p(1-s)}{p(N-2s)-2N}
\frac{1}{\|u\|_{L^{p}(\mathbb{R}^{N})}^{p}}
\right]^{\frac{1}{p-2}},\\
\lambda_{2}=
\left[
\frac{p(N-2s)-2N}{2N-p(N-2)}
\right]^{\frac{1}{2-2s}}.
\end{cases}
\end{aligned}
\end{equation*}
Then $Q$ is a radial non-negative weak solution of equation \eqref{P}, and it is also an optimizer for inequality \eqref{GN}. Moreover, we have
\begin{equation}\label{1.5}
\begin{aligned}
\|Q\|_{D^{1,2}(\mathbb{R}^{N})}^{2}
=&
\frac{p(N-2s)-2N}{2N-p(N-2)}
\|Q\|_{D^{s,2}(\mathbb{R}^{N})}^{2}\\
=&
\frac{p(N-2s)-2N}{2p(1-s)}
\|Q\|_{L^{p}(\mathbb{R}^{N})}^{p},
\end{aligned}
\end{equation}

\begin{equation}\label{1.6}
\begin{aligned}
\frac{N-2}{2}
\|Q\|^{2}_{D^{1,2}(\mathbb{R}^{N})}
+
\frac{N-2s}{2}
\|Q\|^{2}_{D^{s,2}(\mathbb{R}^{N})}
=\frac{N}{p}\|Q\|_{L^{p}(\mathbb{R}^{N})}^{p},
\end{aligned}
\end{equation}
and
\begin{equation}\label{1.7}
\begin{aligned}
C_{N,p,s}^{-1}=
\left(
\frac{2N-p(N-2)}{2p(1-s)}
\right)
^{\frac{2N-p(N-2)}{4(1-s)}}
\left(
\frac{p(N-2s)-2N}{2p(1-s)}
\right)
^{\frac{p(N-2s)-2N}{4(1-s)}}
\|Q\|_{L^{p}(\mathbb{R}^{N})}^{\frac{p(p-2)}{2}}.
\end{aligned}
\end{equation}
\end{enumerate}
\end{theorem}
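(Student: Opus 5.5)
Following Weinstein, I will minimize the functional
\[
J(u)=\frac{\|u\|_{D^{s,2}}^{a}\,\|u\|_{D^{1,2}}^{b}}{\|u\|_{L^{p}}^{p}},\qquad a=\frac{2N-p(N-2)}{2(1-s)},\quad b=\frac{p(N-2s)-2N}{2(1-s)},
\]
over $u\in E\setminus\{0\}$, and then set $C_{N,p,s}^{-1}=\inf J$. Since $p\in(2_s^*,2^*)$, both $a,b>0$ and $a+b=p$. The exponents are exactly those making $J$ invariant under the two-parameter scaling $u\mapsto \mu u(\lambda x)$: the $\mu$-homogeneity is $a+b-p=0$, and the $\lambda$-homogeneity balances as $a(2s-N)/2+b(2-N)/2+N=0$. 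Part (1) reduces to showing $\inf J>0$. For this I interpolate $L^p$ between $L^{2_s^*}$ and $L^{2^*}$ by H\"older, then apply the fractional and classical Sobolev inequalities. This gives $\|u\|_p^p\le C\|u\|_{D^{s,2}}^{\theta p}\|u\|_{D^{1,2}}^{(1-\theta)p}$, and the H\"older exponent $\theta$ is forced by scaling to satisfy $\theta p=a$.

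For part (2), I take a minimizing sequence and replace it by its symmetric decreasing rearrangements $|u_n|^*$. By P\'olya--Szeg\H{o} for both $D^{1,2}$ and $D^{s,2}$, and equimeasurability for $L^p$, these remain minimizing. Using the two scalings, I normalize $\|u_n\|_{D^{1,2}}=\|u_n\|_{D^{s,2}}=1$. Then $u_n$ is bounded in $E$ and radial, so up to a subsequence $u_n\rightharpoonup u$ weakly in $E$. The key compactness input is a Strauss-type radial lemma: radial decreasing functions bounded in $D^{s,2}\cap D^{1,2}$ satisfy $|u(x)|\le C|x|^{-(N-2s)/2}$ near infinity and $|u(x)|\le C|x|^{-(N-2)/2}$ near the origin. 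Because $2_s^*<p<2^*$, these bounds give uniform integrability of $|u_n|^p$ at both ends, so $u_n\to u$ strongly in $L^p$. To rule out $u=0$, I use $\|u_n\|_p^p=1/J(u_n)\to C_{N,p,s}>0$, so $\|u\|_p^p=C_{N,p,s}$ and in particular $u\neq0$. Weak lower semicontinuity of the two seminorms gives $J(u)\le\liminf J(u_n)=\inf J$, so $u$ is a radial non-negative optimizer. I expect this compactness step, which controls both vanishing and concentration in the absence of an $L^2$ norm, to be the main technical obstacle.

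For part (3), I will not rely on regularity or on a Poho\v{z}aev identity proved from the PDE. Instead, the relations come from the minimality of $u$ itself. Write $A=\|u\|_{D^{1,2}}^2$, $B=\|u\|_{D^{s,2}}^2$, $P=\|u\|_p^p$. Differentiating $\log J(u+t\varphi)$ at $t=0$ gives the Euler--Lagrange equation
\[
\frac{b}{A}\int\nabla u\cdot\nabla\varphi+\frac{a}{B}\langle u,\varphi\rangle_{D^{s,2}}=\frac{p}{P}\int |u|^{p-2}u\varphi .
\]
Taking $\varphi=u$ recovers $a+b=p$. I then set $Q=\lambda_1u(\lambda_2x)$ and choose $\lambda_2,\lambda_1$ so that the coefficients in front of $-\Delta$ and $(-\Delta)^s$ coincide and are normalized, so that $Q$ weakly solves \eqref{P}. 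Matching $-\Delta$ with $(-\Delta)^s$ requires $\lambda_2^{2-2s}=\frac{b}{a}\cdot\frac{B}{A}$. At this point I would fix the remaining normalization freedom by choosing the optimizer with $A=B$; scaling allows this since $J$ is invariant. This recovers the stated formula $\lambda_2=(b/a)^{1/(2-2s)}$, and $\lambda_1$ then follows by matching the nonlinearity, giving the stated expression involving $P$.

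Because $J$ is scale invariant, $Q$ is still an optimizer. The identities \eqref{1.5} follow from the minimality of $Q$ along the one-parameter families $Q(\lambda x)$ and $\mu Q$. More directly, I test the weak equation for $Q$ against $Q$, which gives $\|Q\|_{D^{1,2}}^2+\|Q\|_{D^{s,2}}^2=\|Q\|_p^p$. I combine this with the stationarity of $J$ under dilation $\lambda\mapsto Q(\lambda x)$, which yields the ratio $b\,\|Q\|_{D^{s,2}}^2=a\,\|Q\|_{D^{1,2}}^2$ from the relation above. Together these give \eqref{1.5}. The identity \eqref{1.6} is then a linear combination of the identities in \eqref{1.5}, which is the Nehari--Poho\v{z}aev identity obtained without any regularity. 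Finally, \eqref{1.7} comes from substituting \eqref{1.5} into $C_{N,p,s}^{-1}=J(Q)$. This expresses both norms through $\|Q\|_p^p$, and the power of $\|Q\|_p$ is $(a+b)/2\cdot p-p=p(p-2)/2$.
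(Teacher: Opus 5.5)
Your proof is correct in substance. It departs from the paper mainly in the compactness step of part (2).

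\textbf{Part (2).} The paper uses the same rearrangement and normalization $\|u_n\|_{D^{1,2}}=\|u_n\|_{D^{s,2}}=1$, then proceeds in four moves:
\begin{enumerate}
\item It uses the vanishing lemma (Lemma 2.4) to find balls $B(z_n,1)$ carrying a fixed amount of $L^p$ mass.
\item It uses only the $D^{1,2}$ radial decay $|x|^{-(N-2)/2}$ to show $\{z_n\}$ is bounded. That decay alone is not $p$-integrable at infinity, since $p<2^*$.
\item It gets a nonzero weak limit from local compactness.
\item It upgrades to strong convergence in $E$ via the Br\'ezis--Lieb splitting, the inequality applied to $u$ and $u_n-u$, Young's inequality, and strict superadditivity of $t\mapsto t^{p/2}$.
\end{enumerate}
You instead use the monotonicity from the rearrangement together with the $L^{2_s^*}$ bound to get the faster decay $|x|^{-(N-2s)/2}$. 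The envelope $\min\{|x|^{-(N-2)p/2},|x|^{-(N-2s)p/2}\}$ is integrable precisely because $2_s^*<p<2^*$. So dominated convergence gives $u_n\to u$ in $L^p$ at once, and lower semicontinuity finishes.

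Your route is shorter and needs neither the Lions-type lemma nor the equality-case argument. However, it leans entirely on radial monotonicity. The paper's binding argument needs only a nonvanishing weak limit (after translation if necessary), and it also yields strong convergence in $E$.

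Your route gives $\|u\|_{D^{1,2}}=\|u\|_{D^{s,2}}=1$ for free. Both norms are at most $1$ by lower semicontinuity, while $\|u\|_p^p=C_{N,p,s}$ and $J(u)\geqslant C_{N,p,s}^{-1}$ force equality. You should state this explicitly. The $\lambda_1$ in (3) requires $A=B=1$, not merely $A=B$; with only $A=B$, your $\lambda_1$ differs from the stated one by a factor $A^{1/(p-2)}$.

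\textbf{Part (3).} Drop the claims that (1.5) follows from minimality of $Q$ along $\mu Q$ and $Q(\lambda x)$, or from stationarity of $J$ under dilations. You checked yourself that $J$ is invariant under $u\mapsto \mu u(\lambda x)$. Hence $J$ is constant along both families, and those first variations vanish identically, carrying no information.

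The ratio comes from your choice of $\lambda_2$:
$\|Q\|_{D^{1,2}}^2/\|Q\|_{D^{s,2}}^2=\lambda_2^{2-2s}A/B=b/a$.
Combined with the Nehari identity from testing the equation with $Q$, this gives (1.5).

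The paper instead reads off the second identity directly from $\lambda_1$:
$\|Q\|_{D^{1,2}}^2/\|Q\|_p^p=\lambda_1^{2-p}\lambda_2^{2}A/P=b/p$.
Either way works.
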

%\begin{remark}
%In \eqref{1.7}, we present the best constant,
%but $\|Q\|_{L^{p}(\mathbb{R}^{N})}$ is uncertain.
%Based on the results in subsections 1.2-1.4, we know that the ground state uniqueness and Poho\v{z}aev identity play key roles in the proof of best constant.
%Hence, we have the following problems:

%\begin{enumerate}
%\item [(i)]
%In Theorem \ref{Theorem1.1},
%we just show that $Q$ is a weak solution for equation \eqref{P}. But, we do not know that $Q$ is a ground state solution for equation \eqref{P} or not.

%\item [(ii)]
%For equation \eqref{P},
%it should be remarked that the delicate issue of uniqueness of $Q$ (modulo symmetries) as well as the non-degeneracy of the associated linearized operator are open questions.

%\item [(iii)]
%In
%\cite{Dipierro-SuXF-Valdinoci-ZhangJW2025DCDS,Su-Valdinoci-Wei-Zhang2022MZ,Su-Valdinoci-Wei-Zhang2025JDE},
%the authors showed the regularity results for  mixed operators type equation.
%Due to the loss of $L^{2}$ term in equation \eqref{P},
%it is hard to apply their method to get the regularity result.
%By using \eqref{1.5} without any regularity,
%we prove that $Q$ satisfies the Poho\v{z}aev identity \eqref{1.6}.
%However, we do not know that all weak solutions of equation \eqref{P} satisfy Poho\v{z}aev identy or not.
%\end{enumerate}
%\end{remark}

Secondly,
we prove the existence of ground state solution of equation \eqref{P}.
\begin{theorem}\label{Theorem1.2}
Let $N\geqslant3$, $s\in(0,1)$ and $p\in (2^{*}_{s},2^{*})$. Then equation \eqref{P} has a radial ground state solution $\Phi\in E$, which satisfies
\begin{equation*}
\begin{aligned}
I(\Phi)=\inf_{u\in \mathcal{N}}I(u)=c,
\end{aligned}
\end{equation*}
where $I$ is the energy functional of equation \eqref{P} defined as follows
\begin{equation*}
\begin{aligned}
I(u)=
\frac{1}{2}
\left(
\int_{\mathbb{R}^{N}}
|\nabla u|^{2}
\mathrm{d}x
+
\int_{\mathbb{R}^{N}}\int_{\mathbb{R}^{N}}\frac{|u(x)-u(y)|^{2}}{|x-y|^{N+2s}}
\mathrm{d}x\mathrm{d}y\right)
-
\frac{1}{p}
\int_{\mathbb{R}^{N}}
|u|^{p}
\mathrm{d}x,
\end{aligned}
\end{equation*}
and $\mathcal{N}:=\{u\in E\backslash\{0\}|\langle I'(u),u\rangle=0\}$ is the Nehari manifold.
\end{theorem}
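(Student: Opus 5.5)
We will obtain $\Phi$ by minimizing $I$ over the Nehari--Poho\v{z}aev set rather than over $\mathcal{N}$ directly. For $u\in E\setminus\{0\}$ write $A=\|u\|_{D^{1,2}}^2$, $B=\|u\|_{D^{s,2}}^2$ and $C=\|u\|_{L^p}^p$; here $B$ is normalized to match the Gagliardo term in $I$. We use the two-parameter family $u_{t,\lambda}(x)=t\,u(\lambda x)$, for which
\begin{equation*}
I(u_{t,\lambda})=\tfrac{t^2}{2}\lambda^{2-N}A+\tfrac{t^2}{2}\lambda^{2s-N}B-\tfrac{t^p}{p}\lambda^{-N}C .
\end{equation*}
Define $\mathcal{M}=\{u\in E\setminus\{0\}:\ A+B=C,\ \tfrac{N-2}{2}A+\tfrac{N-2s}{2}B=\tfrac Np C\}$, the Nehari identity together with the Poho\v{z}aev identity \eqref{1.6}. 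Solving this linear system in terms of $C$ gives exactly the ratios in \eqref{1.5}, namely $A=\frac{p(N-2s)-2N}{2p(1-s)}C$ and $B=\frac{2N-p(N-2)}{2p(1-s)}C$; both coefficients are positive because $2_s^*<p<2^*$.

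The key steps, in order, are as follows.

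\textbf{Step 1 (projection onto $\mathcal{M}$).} For each $u\neq0$ there is a unique $(t,\lambda)$ with $u_{t,\lambda}\in\mathcal{M}$. Indeed, the ratio $A\lambda^{2-N}/(B\lambda^{2s-N})=\lambda^{2-2s}A/B$ fixes $\lambda$, and then $t$ is fixed by the Nehari equation. This is the computation behind $\lambda_1,\lambda_2$ in Theorem \ref{Theorem1.1}(3).

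\textbf{Step 2 (reformulating the energy).} On $\mathcal{M}$ we have $I=(\frac12-\frac1p)C$. Moreover, $I(u_{t,\lambda})$ expressed through the invariant quotient
\begin{equation*}
W(u)=\frac{B^{\frac{2N-p(N-2)}{4(1-s)}}A^{\frac{p(N-2s)-2N}{4(1-s)}}}{C}
\end{equation*}
shows that $\inf_{\mathcal{M}}I$ is an explicit increasing function of $\inf W=C_{N,p,s}^{-1}$. Hence the optimizer $Q$ of Theorem \ref{Theorem1.1}, which lies in $\mathcal{M}$ by \eqref{1.5} and \eqref{1.6}, attains $m:=\inf_{\mathcal{M}}I$.

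\textbf{Step 3 (comparing $m$ with $c=\inf_{\mathcal{N}}I$).} Since $\mathcal{M}\subset\mathcal{N}$, we get $c\le m$. For the reverse inequality, take $u\in\mathcal{N}$. For fixed $\lambda$, the map $t\mapsto I(u_{t,\lambda})$ is maximized at the Nehari scaling, so
\begin{equation*}
I(u)=\max_t I(u_{t,1})\ \ge\ \max_t I(u_{t,\lambda^*})\ \ge\ I(\text{projection onto }\mathcal{M})\ \ge\ m,
\end{equation*}
provided the middle inequality holds, i.e.\ that the Nehari maximum is largest at $\lambda=1$ after a suitable normalization. To avoid that issue we argue directly instead: for $u\in\mathcal{N}$, $I(u)=(\frac12-\frac1p)C=(\frac12-\frac1p)(A+B)$. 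The Gagliardo--Nirenberg inequality \eqref{GN} gives $A+B=C\le C_{N,p,s}B^{\alpha}A^{\beta}$, where $\alpha+\beta=p/2$ (check: $\frac{2N-p(N-2)+p(N-2s)-2N}{4(1-s)}=\frac p2$). Minimizing $A+B$ subject to this constraint via weighted AM--GM, $A+B\ge(\alpha+\beta)\big(\tfrac{B}{\alpha}\big)^{\alpha/(\alpha+\beta)}\big(\tfrac{A}{\beta}\big)^{\beta/(\alpha+\beta)}$, yields a lower bound on $A+B$ which is attained precisely under the ratios \eqref{1.5}. Therefore $c\ge m$, so $c=m=I(Q)$ and $Q\in\mathcal{N}$ is a ground state; we set $\Phi=Q$, which is radial by Theorem \ref{Theorem1.1}.

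The main obstacle is the bookkeeping of exponents in Step 3: we must check that the AM--GM lower bound for $I$ on $\mathcal{N}$ coincides exactly with $I(Q)$ computed from \eqref{1.5} and \eqref{1.7}. We plan to verify this by substituting \eqref{1.5} into \eqref{1.7}, which expresses $C_{N,p,s}^{-1}$ through $\|Q\|_p^{p}$, and comparing the result with the AM--GM bound.
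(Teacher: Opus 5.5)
Your argument is correct, and it takes a genuinely different route from the paper. The paper proves Theorem \ref{Theorem1.2} without using Theorem \ref{Theorem1.1}. It applies Ekeland's principle on $\mathcal{N}$ to get a bounded $(PS)_c$ sequence, uses the translation Lemma \ref{Lemma2.5} to obtain a nonzero weak limit that is a critical point, and gets $I(u)=c$ from weak lower semicontinuity of $(\frac12-\frac1p)\|u\|_E^2$.

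You instead show that the GN optimizer $Q$ is already a Nehari minimizer. All the work is done by your ``direct'' step. Set $\alpha=\frac{2N-p(N-2)}{4(1-s)}$ and $\beta=\frac{p(N-2s)-2N}{4(1-s)}$. Then GN and weighted AM--GM give, for every $u\in\mathcal{N}$,
\begin{equation*}
\Big(\tfrac{p}{2}\Big)^{\frac{p}{2}}\alpha^{-\alpha}\beta^{-\beta}C_{N,p,s}^{-1}(A+B)\leqslant\Big(\tfrac{p}{2}\Big)^{\frac{p}{2}}\alpha^{-\alpha}\beta^{-\beta}B^{\alpha}A^{\beta}\leqslant(A+B)^{\frac{p}{2}}.
\end{equation*}
So $I(u)=\frac{p-2}{2p}(A+B)$ is bounded below by an explicit constant. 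Both inequalities are equalities for $Q$: it is an optimizer, it lies in $\mathcal{N}$ because it is a weak solution, and it satisfies the ratio in \eqref{1.5}.

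Some consequences for your write-up:
\begin{itemize}
\item The ``bookkeeping obstacle'' you anticipate does not arise.
\item Your Step 2 and the abandoned chain in Step 3 can be dropped.
\item The $c$ in the statement is the mountain-pass level, so you should also cite $c=\inf_{\mathcal{N}}I$ from Lemma \ref{Lemma4.1}(4).
\end{itemize}

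\textbf{What your route buys.} It is shorter and needs neither Ekeland nor concentration-compactness. It actually delivers the radial symmetry and non-negativity claimed in the statement, which the paper's argument does not, since a weak limit of translates carries no symmetry. It also gives Theorem \ref{Theorem1.4} at once, because $c=I(Q)=\frac{p-2}{2p}\|Q\|_{L^{p}(\mathbb{R}^{N})}^{p}$, with no need for Theorem \ref{Theorem1.3}.

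It is also more robust. The paper's route rests on Lemma \ref{Lemma2.4}, which fails as stated. For $u(\lambda x)$ with $\lambda\to0$, the left side grows like $\lambda^{-N}$ while the right side is $O(\lambda^{2s-N})$. The culprit is \eqref{2.2}, a local Sobolev inequality with no lower-order term, which is false for functions constant on the ball. The only compactness you need is inside Theorem \ref{Theorem1.1}, and only for radial sequences. There, the decay bound $|u_n(x)|\leqslant C|x|^{-\frac{N-2}{2}}$ together with local Rellich can replace the appeal to Lemma \ref{Lemma2.4} in Step 1 of Lemma \ref{Lemma3.3}.

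\textbf{What the paper's route buys.} Your argument depends entirely on the existence of a GN optimizer and on the exact homogeneity of the problem. The paper's Nehari scheme, once supplied with a correct Lions-type lemma, would survive non-homogeneous nonlinearities, where GN optimizers and ground states are unrelated.
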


Thirdly,
we prove the existence of special solution for equation \eqref{P}, which satisfies Poho\v{z}aev identity.
\begin{theorem}\label{Theorem1.3}
Let $N\geqslant3$, $s\in(0,1)$ and $p\in (2^{*}_{s},2^{*})$.
Then equation \eqref{P} has a solution $\Psi\in E$, which satisfies
\begin{equation*}
\begin{aligned}
P(\Psi)=\frac{N-2}{2}
\|\Psi\|^{2}_{D^{1,2}(\mathbb{R}^{N})}
+
\frac{N-2s}{2}
\|\Psi\|^{2}_{D^{s,2}(\mathbb{R}^{N})}
-\frac{N}{p}\|\Phi\|_{L^{p}(\mathbb{R}^{N})}^{p}=0,
\end{aligned}
\end{equation*}
and
\begin{equation*}
\begin{aligned}
I(\Psi)=\inf_{u\in \mathcal{M}}I(u)=c
\end{aligned}
\end{equation*}
where $\mathcal{M}:=\{u\in E\backslash\{0\}|P(u)=0\}$ is the Nehari-Poho\v{z}aev manifold.
\end{theorem}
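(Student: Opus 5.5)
We will prove Theorem \ref{Theorem1.3} by minimizing $I$ over the Nehari-Poho\v{z}aev manifold $\mathcal{M}$ and then showing that the minimizer is a free critical point. Write $a(u)=\|u\|_{D^{1,2}}^{2}$, $b(u)=\|u\|_{D^{s,2}}^{2}$, $d(u)=\|u\|_{L^{p}}^{p}$ (with the paper's normalization of the Gagliardo seminorm in $I$). We will use the two-parameter family $u_{t,\lambda}=t\,u(\lambda x)$. Then
\[
I(u_{t,\lambda})=\tfrac{t^{2}}{2}\left(\lambda^{2-N}a+\lambda^{2s-N}b\right)-\tfrac{t^{p}}{p}\lambda^{-N}d .
\]
The relation $P(u)=\frac{d}{d\lambda}\big|_{\lambda=1}$ (up to sign) of $I(u(\lambda\cdot))$ is the natural constraint we will work with.

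\textbf{Step 1: $\mathcal{M}$ is a natural constraint of fibering type.} Take the fiber $\lambda\mapsto u^{\lambda}:=u(\lambda\,\cdot)$, where for $p\in(2_s^*,2^*)$ the exponents satisfy $N-2>N-2s>\dots$ in the correct order. Combining with the Nehari direction, we use the fiber $\tau\mapsto \tau^{\alpha}u(\tau x)$, with $\alpha$ chosen so that the $L^{p}$ term scales between the two kinetic terms. For this fiber we check that $\phi_u(\tau)=I(\tau^{\alpha}u(\tau\cdot))$ has a unique critical point $\tau_u$, which is a global maximum, and that $\phi_u'(\tau)=0$ iff $\tau^{\alpha}u(\tau\cdot)\in\mathcal{M}$.

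We then show $c_{\mathcal{M}}:=\inf_{\mathcal{M}}I>0$. This follows because on $\mathcal{M}$, (GN) from Theorem \ref{Theorem1.1}(1) together with $P=0$ bounds $a,b$ from below. We also get $I|_{\mathcal{M}}=\frac{p(N-2)-2N}{2p}\cdot(\dots)$-type positive combinations of $a$ and $b$, so minimizing sequences are bounded in $E$.

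\textbf{Step 2: existence of a minimizer.} Using the symmetric decreasing rearrangement (P\'olya--Szeg\H{o} for both seminorms, $L^{p}$ preserved), and projecting back to $\mathcal{M}$ along the fiber (the projection does not increase $I$ because the fiber maximum decreases when $a,b$ decrease), we may take a radial minimizing sequence. Radial compactness (Strauss-type embedding of radial $E$ into $L^{p}$ for $2_s^*<p<2^*$) gives $u_n\to\Psi$ in $L^{p}$. Weak lower semicontinuity gives $P(\Psi)\le0$ and $\Psi\neq0$, since $d$ is bounded below on $\mathcal{M}$. If $P(\Psi)<0$, projecting gives $\tau_\Psi$ with $I(\Psi_{\tau_\Psi})<\liminf I(u_n)$, which is a contradiction. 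Hence $\Psi\in\mathcal{M}$ attains $c_{\mathcal{M}}$.

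\textbf{Step 3: $\Psi$ solves \eqref{P}, and $c_{\mathcal{M}}=c$.} This is the main obstacle, because $\mathcal{M}$ is not $C^{1}$-natural without regularity. We would argue by a Lagrange multiplier: $I'(\Psi)=\mu P'(\Psi)$. Testing with the generator of the fiber, $\frac{d}{d\tau}\big|_{\tau=1}\tau^{\alpha}\Psi(\tau\cdot)$, the left side vanishes because $\Psi\in\mathcal{M}$ is a fiber critical point. The right side equals $\mu\,\phi_\Psi''(1)$-type quantity, which is strictly negative by Step 1. Hence $\mu=0$. Note that this testing only uses the identities for $a,b,d$ under dilations, not regularity.

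Alternatively, and more robustly, we can avoid regularity entirely using Theorem \ref{Theorem1.1}. By \eqref{1.5}--\eqref{1.6}, the optimizer $Q$ is a weak solution lying in $\mathcal{M}$. On the other hand, every $u\in\mathcal{M}$ satisfies $I(u)\ge$ a quantity expressed through (GN) with $C_{N,p,s}$, with equality exactly at optimizers. This gives $c_{\mathcal{M}}=I(Q)$, so $\Psi$ can be taken to be $Q$, which is a solution.

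For $I(\Psi)=c$: every element of $\mathcal{M}$ projects onto $\mathcal{N}$ via $t\mapsto tu$ without increasing $I$ beyond the fiber max, giving $c\le c_{\mathcal{M}}$. Conversely, the ground state $\Phi$ of Theorem \ref{Theorem1.2} satisfies $\max_{t,\lambda}I(t\Phi(\lambda\cdot))=I(\Phi)$; this holds because $\Phi$ maximizes its Nehari fiber, and the dilation fiber through $\mathcal{M}$ meets the Nehari set. Hence $c_{\mathcal{M}}\le I(\Phi)=c$, and therefore $c_{\mathcal{M}}=c$.
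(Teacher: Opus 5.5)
Your Step 2 is sound: a radial minimizer of $I$ on $\mathcal{M}$, obtained via rearrangement, projection along the pure dilation fiber of Lemma \ref{Lemma5.2}, and radial compactness in $L^{p}$. Step 3 and the identification with $c$, however, contain genuine gaps.

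The Lagrange-multiplier argument does not work. Along $\tau\mapsto\tau^{\alpha}u(\tau\cdot)$, the derivative of $I$ at $\tau=1$ equals $\alpha\langle I'(u),u\rangle-P(u)$, so for $\alpha\neq0$ the critical set of this fiber is not $\mathcal{M}$ (note also that $N-2<N-2s$). For $\alpha=0$ the generator is $x\cdot\nabla\Psi$, which is not known to lie in $E$; equivalently, $\tau\mapsto\Psi(\tau\cdot)$ is not known to be differentiable into $E$. So the identity $I'(\Psi)=\mu P'(\Psi)$ in $E^{-1}$ cannot be paired with it. Testing with $\Psi$ itself only gives $a+b-d=-\mu\frac{N(p-2)}{p}d$, which does not force $\mu=0$. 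This is exactly the regularity obstruction the paper stresses.

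Your alternative via $Q$ is the viable route, but it rests on a computation you only assert, and "equality exactly at optimizers" is not enough. Put $\theta_{1}=\frac{p(N-2s)-2N}{4(1-s)}$, $\theta_{2}=\frac{2N-p(N-2)}{4(1-s)}$ and $a=rb$. For $u\in\mathcal{M}$, $P(u)=0$ and \eqref{GN} give $b^{\frac{p-2}{2}}\geqslant\frac{p((N-2)r+N-2s)}{2NC_{N,p,s}r^{\theta_{1}}}$. Hence $I(u)=I(u)-\frac{1}{N}P(u)=\frac{(r+s)b}{N}\geqslant F(r)$ for an explicit $F$. One checks that $F\to\infty$ at $0$ and at $\infty$, with a unique critical point $r_{Q}=\frac{\theta_{1}}{\theta_{2}}=\frac{p(N-2s)-2N}{2N-p(N-2)}$. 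Moreover $I(Q)=F(r_{Q})$, because $Q$ is an optimizer satisfying \eqref{1.5}--\eqref{1.6}. Optimizers lying in $\mathcal{M}$ occur with every ratio $r$, so this optimization in $r$ is essential. Only then do you get $\inf_{\mathcal{M}}I=I(Q)$ with $Q$ a weak solution in $\mathcal{M}$.

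Your last paragraph, which should give $\inf_{\mathcal{M}}I=c$, fails in both directions.
\begin{itemize}
\item Projecting $u\in\mathcal{M}$ onto $\mathcal{N}$ gives $I(t_{u}u)=\max_{t>0}I(tu)\geqslant I(u)$. That is the wrong inequality for $c\leqslant\inf_{\mathcal{M}}I$.
\item The claim $\max_{t,\lambda}I(t\Phi(\lambda\cdot))=I(\Phi)$ is false. For fixed $\lambda$, $\max_{t>0}I(t\Phi(\lambda\cdot))=\frac{p-2}{2p}(\lambda^{2-N}a+\lambda^{2s-N}b)^{\frac{p}{p-2}}(\lambda^{-N}d)^{-\frac{2}{p-2}}$. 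This tends to $+\infty$ both as $\lambda\to\infty$ and as $\lambda\to0$, because $2N-p(N-2)>0>2N-p(N-2s)$.
\item The weaker statement that $\Phi$ maximizes its dilation fiber is equivalent to $P(\Phi)=0$, i.e.\ to the Poho\v{z}aev identity that is not available.
\end{itemize}

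The missing idea for $\inf_{\mathcal{M}}I\leqslant c$ is the mountain-pass characterization of $c$ (Lemma \ref{Lemma4.1}(4)). Combine it with the fact that every $\gamma\in\Gamma$ crosses $\mathcal{M}$: $P>0$ on a small sphere in $E$, and $P(\gamma(1))\leqslant NI(\gamma(1))<0$. This is Lemma \ref{Lemma5.3}.

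Alternatively, stay with \eqref{GN}. On $\mathcal{N}$ one has $I=\frac{p-2}{2p}(a+b)$ with $a+b=d\leqslant C_{N,p,s}a^{\theta_{1}}b^{\theta_{2}}$. The same one-variable optimization in $r$ is again minimized exactly at $r_{Q}$, giving $c\geqslant I(Q)$. Conversely $c\leqslant I(Q)$ because $Q\in\mathcal{N}$.

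With these repairs, $\Psi=Q$ proves the theorem and your Step 2 becomes superfluous. That is a genuinely different route from the paper's. The paper builds a Cerami sequence with $P(u_{n})\to0$ from the augmented functional $J(z,v)=I(v(e^{-z}\cdot))$ and passes to a translated weak limit (Lemmas \ref{Lemma5.7} and \ref{Lemma5.8}).
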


\begin{remark}
Due to the appearance of the mixed local and nonlocal operator and the loss of $L^{2}$ term in equation \eqref{P}, it is hard to get the regularity of weak solutions.
Hence, it is not easy to show that all weak solutions of equation \eqref{P} satisfy Poho\v{z}aev identy.
Nevertheless,
combining Theorems \ref{Theorem1.2} and \ref{Theorem1.3}, we know that the ground sate solutions of equation \eqref{P} satisfy Poho\v{z}aev identy.
%This partial answer the question of (iii) in Remark 1.1.
\end{remark}

Finally, by using Theorem \ref{Theorem1.3}, we establish the best constant $C_{N,p,s}$.
\begin{theorem}\label{Theorem1.4}
Let $N\geqslant3$, $s\in(0,1)$ and $p\in (2^{*}_{s},2^{*})$. Then we have the following results.

\begin{enumerate}
\item [(1)]
The best constant of \eqref{1.1} is
\begin{equation*}
\begin{aligned}
C_{N,p,s}^{-1}
=\left(
\frac{2N-p(N-2)}{2p(1-s)}
\right)
^{\frac{2N-p(N-2)}{4(1-s)}}
\left(
\frac{p(N-2s)-2N}{2p(1-s)}
\right)
^{\frac{p(N-2s)-2N}{4(1-s)}}
\left[
\frac{2p}{p-2}
c
\right]^{\frac{p-2}{2}}
\end{aligned}
\end{equation*}
where $c=\inf\limits_{u\in \mathcal{N}}I(u)$.

\item [(2)]
The optimizer $Q$ is not only a weak solution of equation \eqref{P}, but it is also a ground state solution and satisfies
\begin{equation*}
\begin{aligned}
I(Q)=c~~\mathrm{and}~~P(Q)=0.
\end{aligned}
\end{equation*}
\end{enumerate}
\end{theorem}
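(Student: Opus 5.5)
The plan is to show that the quantity $\|Q\|_{L^{p}}^{p}$ appearing in \eqref{1.7} is determined by the ground state level $c$, namely $\|Q\|_{L^{p}(\mathbb{R}^{N})}^{p}=\frac{2p}{p-2}c$. Substituting this into \eqref{1.7} gives part (1). On $\mathcal{M}$ (and likewise on $\mathcal{N}$ for the same functional), write $a=\|u\|_{D^{1,2}}^{2}$, $b=\|u\|_{D^{s,2}}^{2}$, $d=\|u\|_{L^{p}}^{p}$. For a weak solution of \eqref{P} we have $a+b=d$, so $I(u)=(\frac12-\frac1p)d=\frac{p-2}{2p}d$. By Theorem \ref{Theorem1.1}, $Q$ is a weak solution, so $Q\in\mathcal{N}$ and $I(Q)=\frac{p-2}{2p}\|Q\|_{L^{p}}^{p}$. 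By \eqref{1.6}, $P(Q)=0$, so $Q\in\mathcal{M}$. Hence $I(Q)\geqslant \inf_{\mathcal{M}}I=c$ by Theorem \ref{Theorem1.3} (equivalently $\geqslant\inf_{\mathcal N}I=c$ by Theorem \ref{Theorem1.2}). It therefore suffices to prove the reverse inequality $I(Q)\leqslant c$, which also yields part (2).

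For the reverse inequality I compare $Q$ with the ground state $\Psi$ of Theorem \ref{Theorem1.3}, which satisfies both $\langle I'(\Psi),\Psi\rangle=0$ (being a solution) and $P(\Psi)=0$. These two linear relations in $(a,b,d)$ give, exactly as for \eqref{1.5},
\begin{equation*}
\|\Psi\|_{D^{1,2}}^{2}=\tfrac{p(N-2s)-2N}{2p(1-s)}\|\Psi\|_{L^{p}}^{p},\qquad
\|\Psi\|_{D^{s,2}}^{2}=\tfrac{2N-p(N-2)}{2p(1-s)}\|\Psi\|_{L^{p}}^{p}.
\end{equation*}
To check this, solve $a+b=d$ and $\frac{N-2}{2}a+\frac{N-2s}{2}b=\frac Np d$; the two coefficients sum to $1$. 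We then insert $\Psi$ into \eqref{GN} with the best constant:
\begin{equation*}
\|\Psi\|_{L^{p}}^{p}\leqslant C_{N,p,s}\Big(\alpha\|\Psi\|_{L^{p}}^{p}\Big)^{\frac{2N-p(N-2)}{4(1-s)}}\Big(\beta\|\Psi\|_{L^{p}}^{p}\Big)^{\frac{p(N-2s)-2N}{4(1-s)}},
\end{equation*}
where $\alpha=\frac{2N-p(N-2)}{2p(1-s)}$ and $\beta=\frac{p(N-2s)-2N}{2p(1-s)}$. The exponents sum to $\frac{p(2-2s)}{4(1-s)}=\frac p2$, so this reads $C_{N,p,s}^{-1}\leqslant \alpha^{\cdot}\beta^{\cdot}\|\Psi\|_{L^{p}}^{p(p-2)/2}$. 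Comparing with \eqref{1.7} gives $\|Q\|_{L^{p}}^{p}\leqslant\|\Psi\|_{L^{p}}^{p}$, hence $I(Q)=\frac{p-2}{2p}\|Q\|_{L^p}^p\leqslant \frac{p-2}{2p}\|\Psi\|_{L^p}^p=I(\Psi)=c$.

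Combining the two inequalities gives $I(Q)=c$, $P(Q)=0$, and $\|Q\|_{L^{p}}^{p}=\frac{2p}{p-2}c$. The last identity, inserted in \eqref{1.7}, is precisely the formula in (1). Since $Q\in\mathcal{N}$ attains $\inf_{\mathcal N}I=c$, it is a ground state, which is (2).

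The main obstacle is the step identifying the ground state as lying on $\mathcal{M}$, i.e.\ having a solution that satisfies both the Nehari and the Pohožaev relations at level $c$. This is exactly what Theorem \ref{Theorem1.3} supplies, together with the fact that its level coincides with the Nehari level $c$ of Theorem \ref{Theorem1.2}. Given those, the rest is algebra; the only care needed is to confirm that the exponent bookkeeping in \eqref{1.7} matches $\|Q\|_{L^p}^{p(p-2)/2}$ versus $[\frac{2p}{p-2}c]^{\frac{p-2}{2}}$, which it does since $\|Q\|_{L^p}^{p(p-2)/2}=(\|Q\|_{L^p}^{p})^{(p-2)/2}$.
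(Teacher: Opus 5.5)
Your proposal is correct and follows the paper's own proof. The paper also gets $I(Q)\geqslant c$ from $Q$ being a weak solution with $P(Q)=0$. It then plays the identity \eqref{1.7} for $Q$ against $C_{N,p,s}^{-1}\leqslant W(\phi)$ for the Poho\v{z}aev ground state of Theorem \ref{Theorem1.3} (Lemmas \ref{Lemma6.1} and \ref{Lemma6.2}), which yields $\|Q\|_{L^{p}(\mathbb{R}^{N})}^{p}=\|\phi\|_{L^{p}(\mathbb{R}^{N})}^{p}=\frac{2p}{p-2}c$ and hence both (1) and (2). Your exponent bookkeeping is right: the two GN exponents sum to $\frac{p}{2}$, giving the power $\frac{p-2}{2}$ of the $L^{p}$-norm raised to $p$. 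This is cleaner than the paper's Lemma \ref{Lemma6.2}, which misprints that exponent as $\frac{p(p-2)}{p}$.
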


This paper is organized as follows. In Section 2, we collect basic properties of the function
space $E$ and establish an extanded Lieb translation result.
In Section 3-6, we prove Theorems \ref{Theorem1.1}-\ref{Theorem1.4},
respectively.

\section{Sobolev Space and Lieb's translation theorem}
The homogeneous Sobolev space
\begin{equation*}
\begin{aligned}
D^{1,2}(\mathbb{R}^{N})
=
\{u\in L^{2^{*}}(\mathbb{R}^{N})|
|\nabla u|\in L^{2}(\mathbb{R}^{N})\},
\end{aligned}
\end{equation*}
its semi-norm is taken as
\begin{equation*}
\begin{aligned}
\|u\|_{D^{1,2}
(\mathbb{R}^{N})}^{2}
=\int_{\mathbb{R}^{N}}
|\nabla u|^{2}
\mathrm{d}x.
\end{aligned}
\end{equation*}
The homogeneous fractional Sobolev space $D^{s,2}(\mathbb{R}^{N})$ is the completion of $C_{0}^{\infty}(\mathbb{R}^{N})$ under the semi-norm
\begin{equation*}
\begin{aligned}
\|u\|_{D^{s,2}(\mathbb{R}^{N})}^{2}:=
\int_{\mathbb{R}^{N}}\int_{\mathbb{R}^{N}}\frac{|u(x)-u(y)|^{2}}{|x-y|^{N+2s}}\mathrm{d}x\mathrm{d}y.
\end{aligned}
\end{equation*}
The mixed Sobolev space $E$ is defined by the completion of $C^{\infty}_{0}(\mathbb{R}^{N})$ under the semi-norm
\begin{equation*}
\begin{aligned}
\|u\|_{E}^{2}:=
\int_{\mathbb{R}^{N}}
|\nabla u|^{2}
\mathrm{d}x
+
\int_{\mathbb{R}^{N}}
\int_{\mathbb{R}^{N}}
\frac{|u(x)-u(y)|^{2}}{|x-y|^{N+2s}}
\mathrm{d}x
\mathrm{d}y.
\end{aligned}
\end{equation*}

\begin{lemma}\label{Lemma2.1}
$E\hookrightarrow D^{1,2}(\mathbb{R}^{N})$ and
$E\hookrightarrow D^{s,2}(\mathbb{R}^{N})$.
\end{lemma}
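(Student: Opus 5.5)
The plan is to establish the two continuous embeddings on the dense subspace $C_{0}^{\infty}(\mathbb{R}^{N})$ and then pass to the completion. For $\varphi\in C_{0}^{\infty}(\mathbb{R}^{N})$ the definition of the norm gives directly
\begin{equation*}
\|\varphi\|_{D^{1,2}(\mathbb{R}^{N})}^{2}\leqslant\|\varphi\|_{E}^{2}
\quad\text{and}\quad
\|\varphi\|_{D^{s,2}(\mathbb{R}^{N})}^{2}\leqslant\|\varphi\|_{E}^{2},
\end{equation*}
since each term of $\|\varphi\|_{E}^{2}$ is nonnegative. So the identity map on $C_{0}^{\infty}(\mathbb{R}^{N})$ is Lipschitz, with constant $1$, from the $E$-norm to each of the two target norms.

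The second step is to realize the elements of $E$ as functions. Take $u\in E$ and a sequence $\varphi_{n}\in C_{0}^{\infty}(\mathbb{R}^{N})$ that is Cauchy in $\|\cdot\|_{E}$. By the inequalities above, $(\varphi_{n})$ is Cauchy in $D^{1,2}(\mathbb{R}^{N})$. The Sobolev inequality $\|\varphi\|_{L^{2^{*}}}\leqslant S^{-1/2}\|\nabla\varphi\|_{L^{2}}$ (valid since $N\geqslant3$) then makes it Cauchy in $L^{2^{*}}(\mathbb{R}^{N})$. Hence $\varphi_{n}\to v$ in $L^{2^{*}}$ and $\nabla\varphi_{n}\to \nabla v$ in $L^{2}$, so $v\in D^{1,2}(\mathbb{R}^{N})$ in the sense of the paper's definition. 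We define $\iota(u)=v$. This limit is independent of the chosen sequence: interlacing two approximating sequences produces a single Cauchy sequence.

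In the same way, $(\varphi_{n})$ is Cauchy in $D^{s,2}(\mathbb{R}^{N})$. By the fractional Sobolev inequality $\|\varphi\|_{L^{2_{s}^{*}}}\leqslant C\|\varphi\|_{D^{s,2}}$, it converges in $L^{2_{s}^{*}}$ to some $w$. Passing to a subsequence, $\varphi_{n}\to v$ a.e. and $\varphi_{n}\to w$ a.e., so $v=w$ a.e. Fatou's lemma applied to the Gagliardo double integral of $\varphi_{n}-\varphi_{m}$ then shows that $\varphi_{n}\to v$ in $D^{s,2}(\mathbb{R}^{N})$. Therefore one and the same function $v$ represents $u$ in both spaces, and
\begin{equation*}
\|v\|_{D^{1,2}}\leqslant\|u\|_{E}
\quad\text{and}\quad
\|v\|_{D^{s,2}}\leqslant\|u\|_{E}.
\end{equation*}

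The main obstacle is \textbf{injectivity} of $u\mapsto v$, so that these maps are genuine embeddings. Suppose $v=0$. Then $\nabla\varphi_{n}\to0$ in $L^{2}$, and the Gagliardo seminorm of $\varphi_{n}$ tends to $0$ because it equals $\|\varphi_{n}-v\|_{D^{s,2}}$. Consequently $\|\varphi_{n}\|_{E}\to0$, that is, $u=0$ in $E$. This works precisely because both seminorms are controlled by a single $L^{q}$ limit. The point that needs care is that the two Sobolev inequalities, with their different exponents $2^{*}$ and $2_{s}^{*}$, produce the same limit function; the a.e. subsequence argument above handles this. Linearity and continuity of the two maps then follow from the inequalities.
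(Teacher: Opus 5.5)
Your proposal is correct and rests on the same observation as the paper: the trivial bounds $\|u\|_{D^{1,2}(\mathbb{R}^{N})}^{2}\leqslant\|u\|_{E}^{2}$ and $\|u\|_{D^{s,2}(\mathbb{R}^{N})}^{2}\leqslant\|u\|_{E}^{2}$, after which the paper stops. Your further steps fill in details the paper leaves implicit: passing from $C_{0}^{\infty}(\mathbb{R}^{N})$ to the abstract completion, identifying the $L^{2^{*}}$ and $L^{2_{s}^{*}}$ limits almost everywhere, the Fatou argument for the Gagliardo seminorm, and injectivity.
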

\begin{proof}
It is easy to see that
\begin{equation*}
\begin{aligned}
\|u\|_{D^{1,2}(\mathbb{R}^{N})}^{2}
\leqslant \|u\|_{E}^{2},
\end{aligned}
\end{equation*}
and
\begin{equation*}
\begin{aligned}
\|u\|_{D^{s,2}(\mathbb{R}^{N})}^{2}
\leqslant \|u\|_{E}^{2}.
\end{aligned}
\end{equation*}
Therefore $E\hookrightarrow D^{1,2}(\mathbb{R}^{N})$ and
$E\hookrightarrow D^{s,2}(\mathbb{R}^{N})$.
\end{proof}

\begin{lemma}\label{Lemma2.2}
$E\hookrightarrow L^{t}(\mathbb{R}^{N})$, $t\in [\frac{2N}{N-2s},\frac{2N}{N-2}]$.
\end{lemma}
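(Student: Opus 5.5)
The plan is to interpolate between the two endpoint Sobolev embeddings, using Lemma~\ref{Lemma2.1} to control each endpoint norm by $\|u\|_{E}$. It suffices to work with $u\in C_{0}^{\infty}(\mathbb{R}^{N})$ and then pass to the completion $E$ by density.

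\textbf{Step 1: the two endpoints.} First, the classical Sobolev inequality $\|u\|_{L^{2^{*}}(\mathbb{R}^{N})}\leqslant S_{1}\|u\|_{D^{1,2}(\mathbb{R}^{N})}$ together with Lemma~\ref{Lemma2.1} gives
\begin{equation*}
\|u\|_{L^{2^{*}}(\mathbb{R}^{N})}\leqslant S_{1}\|u\|_{E}.
\end{equation*}
Second, the fractional Sobolev inequality $\|u\|_{L^{2^{*}_{s}}(\mathbb{R}^{N})}\leqslant S_{s}\|u\|_{D^{s,2}(\mathbb{R}^{N})}$, valid for $N>2s$ and in particular for $N\geqslant3$, combined with Lemma~\ref{Lemma2.1} gives
\begin{equation*}
\|u\|_{L^{2^{*}_{s}}(\mathbb{R}^{N})}\leqslant S_{s}\|u\|_{E}.
\end{equation*}

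\textbf{Step 2: interior exponents.} Fix $t\in(2^{*}_{s},2^{*})$ and choose $\theta\in(0,1)$ with
\begin{equation*}
\frac{1}{t}=\frac{\theta}{2^{*}_{s}}+\frac{1-\theta}{2^{*}}.
\end{equation*}
Hölder's inequality (interpolation of Lebesgue norms) then yields
\begin{equation*}
\|u\|_{L^{t}}\leqslant\|u\|_{L^{2^{*}_{s}}}^{\theta}\|u\|_{L^{2^{*}}}^{1-\theta}\leqslant S_{s}^{\theta}S_{1}^{1-\theta}\|u\|_{E},
\end{equation*}
so the embedding holds on $C_{0}^{\infty}(\mathbb{R}^{N})$ with a uniform constant.

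\textbf{Step 3: extension to $E$.} Take $u\in E$ and a sequence $u_{n}\in C_{0}^{\infty}(\mathbb{R}^{N})$ that is Cauchy in $\|\cdot\|_{E}$. By Step 2, $(u_{n})$ is Cauchy in $L^{t}$ for every $t$ in the range, and in particular in $L^{2^{*}}$. The $L^{t}$ limits all agree a.e., since a subsequence converges a.e. in each space. Hence each element of $E$ is identified with a single measurable function, and the inequality passes to the limit.

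\textbf{Main obstacle.} The only subtle point is this identification of $E$, an abstract completion, with a function space. Because $\|\cdot\|_{E}$ is a genuine norm on $C_{0}^{\infty}(\mathbb{R}^{N})$ controlling the $L^{2^{*}}$ norm, the completion embeds injectively into $L^{2^{*}}$, which settles it. The remaining steps use only standard inequalities.
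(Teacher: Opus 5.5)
Your proof is correct and follows the paper's route: Hölder interpolation between $L^{2^{*}_{s}}$ and $L^{2^{*}}$, with each endpoint bounded by $\|u\|_{E}$ through the Sobolev inequalities and Lemma~\ref{Lemma2.1}, and unlike the paper's last display you keep track of the Sobolev constants.

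Your Step 3 goes beyond the paper, but injectivity of $E\to L^{2^{*}}$ does not follow from norm domination alone. It holds here because $\nabla$ and the difference quotient $(u(x)-u(y))/|x-y|^{\frac{N}{2}+s}$ are closable: an $E$-Cauchy sequence in $C_{0}^{\infty}(\mathbb{R}^{N})$ that tends to $0$ in $L^{2^{*}}$ has $E$-limit $0$.
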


\begin{proof}
Using H\"{o}lder's inequality, we have
\begin{equation}\label{2.1}
\begin{aligned}
\int_{\mathbb{R}^{N}}
|u|^{t}
\mathrm{d}x
\leqslant&
\left(
\int_{\mathbb{R}^{N}}
|u|^{\frac{2N}{N-2s}}
\mathrm{d}x
\right)
^{\frac{(N-2s)[2N-t(N-2)]}{4N(1-s)}}
\left(
\int_{\mathbb{R}^{N}}
|u|^{\frac{2N}{N-2}}
\mathrm{d}x
\right)
^{\frac{(N-2)[t(N-2s)-2N]}{4N(1-s)}}.
\end{aligned}
\end{equation}
From Lemma \ref{Lemma2.1}, we know that:
\begin{equation*}
\begin{aligned}
\left(
\int_{\mathbb{R}^{N}}
|u|^{\frac{2N}{N-2s}}
\mathrm{d}x
\right)^{\frac{2}{2_{s}^{*}}}
\leqslant
\|u\|_{D^{s,2}(\mathbb{R}^{N})}^{2}
\leqslant \|u\|_{E}^{2},
\end{aligned}
\end{equation*}
and
\begin{equation*}
\begin{aligned}
\left(
\int_{\mathbb{R}^{N}}
|u|^{\frac{2N}{N-2}}
\mathrm{d}x
\right)^{\frac{2}{2^{*}}}
\leqslant
\|u\|_{D^{1,2}(\mathbb{R}^{N})}^{2}
\leqslant \|u\|_{E}^{2}.
\end{aligned}
\end{equation*}
Then we get
\begin{equation*}
\begin{aligned}
\int_{\mathbb{R}^{N}}
|u|^{t}
\mathrm{d}x
\leqslant&
\left(
\int_{\mathbb{R}^{N}}
|u|^{\frac{2N}{N-2s}}
\mathrm{d}x
\right)
^{\frac{(tN-2N-2t)(N-2s)}{4N(s-1)}}
\left(
\int_{\mathbb{R}^{N}}
|u|^{\frac{2N}{N-2}}
\mathrm{d}x
\right)
^{\frac{(tN-2N-2ts)(N-2)}{4N(1-s)}}\\
\leqslant&\|u\|_{E}^{t}<\infty.
\end{aligned}
\end{equation*}
The proof is complete.
\end{proof}

\begin{lemma}\label{Lemma2.3}
\cite[Theorem 1.1]{Cotsiolis-Tavoularis2004JMAA}
Let
$s\in(0,1]$
and $N>2s$.
Then there exists a constant
$S_{s}>0$
such that for
any
$u\in D^{s,2}(\mathbb{R}^{N})$,
\begin{equation*}
\begin{aligned}
\|u\|_{L^{2_{s}^{*}}(\mathbb{R}^{N})}^{2}
\leqslant
S_{s}^{-1}
\|u\|^{2}_{D^{s,2}}(\mathbb{R}^{N}).
\end{aligned}
\end{equation*}
\end{lemma}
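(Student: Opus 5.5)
The statement to prove is Lemma~\ref{Lemma2.3}, the fractional Sobolev inequality on $D^{s,2}(\mathbb{R}^{N})$ with $N>2s$. The plan is to reduce it to a Hardy--Littlewood--Sobolev (HLS) bound via the Fourier representation of the Gagliardo seminorm, prove it on $C_{0}^{\infty}(\mathbb{R}^{N})$, and then extend it by density. The case $s=1$ is the classical Sobolev inequality. For $s\in(0,1)$, the first step is the identity
\begin{equation*}
\int_{\mathbb{R}^{N}}\int_{\mathbb{R}^{N}}\frac{|u(x)-u(y)|^{2}}{|x-y|^{N+2s}}\,\mathrm{d}x\,\mathrm{d}y
=C(N,s)\int_{\mathbb{R}^{N}}|\xi|^{2s}|\hat u(\xi)|^{2}\,\mathrm{d}\xi,\qquad u\in C_{0}^{\infty}(\mathbb{R}^{N}).
\end{equation*}
This follows from Plancherel in $x$ for fixed $h=x-y$, since $\widehat{u(\cdot+h)-u}=(e^{ih\cdot\xi}-1)\hat u$. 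One then notes that $\int_{\mathbb{R}^{N}}|e^{ih\cdot\xi}-1|^{2}|h|^{-N-2s}\,\mathrm{d}h$ is finite, by $s<1$ near $0$ and $s>0$ near infinity. It is also rotation invariant and homogeneous of degree $2s$ in $\xi$, so it equals $C(N,s)|\xi|^{2s}$.

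The second step writes $f=(-\Delta)^{s/2}u$, defined by $\hat f=|\xi|^{s}\hat u$, so that $\|f\|_{L^{2}}^{2}$ is a constant multiple of $\|u\|_{D^{s,2}}^{2}$. Since $N>2s$, we can invert and get $u=c_{N,s}\,|x|^{-(N-s)}*f$, the Riesz potential $I_{s}f$. This is justified for Schwartz data because $|\xi|^{-s}$ is locally integrable when $s<N$. HLS in its Riesz-potential form (Stein's fractional integration theorem) gives $\|I_{s}f\|_{L^{q}}\le C\|f\|_{L^{2}}$ for $\frac1q=\frac12-\frac sN$, that is, $q=\frac{2N}{N-2s}=2_{s}^{*}$. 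Combining the two steps yields $\|u\|_{L^{2_{s}^{*}}}^{2}\le S_{s}^{-1}\|u\|_{D^{s,2}}^{2}$ on $C_{0}^{\infty}(\mathbb{R}^{N})$. If the sharp constant is wanted, as in Cotsiolis--Tavoularis, one can instead invoke Lieb's sharp HLS inequality and use duality: $\|I_{s}\|_{L^{2}\to L^{2_{s}^{*}}}^{2}$ equals the sharp HLS constant for the kernel $|x-y|^{-(N-2s)}$ with exponent $2N/(N+2s)$. Only existence of some $S_{s}>0$ is needed here, so the non-sharp HLS suffices.

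The last step is density, and I expect it to be the only point requiring care. Given $u\in D^{s,2}(\mathbb{R}^{N})$, take $u_{n}\in C_{0}^{\infty}$ Cauchy in the seminorm. The inequality makes $(u_{n})$ Cauchy in $L^{2_{s}^{*}}$, hence convergent there. This identifies elements of the completion with $L^{2_{s}^{*}}$ functions, which is also what makes the seminorm a genuine norm. Passing to an a.e.-convergent subsequence and applying Fatou to the double integral shows the $L^{2_{s}^{*}}$ limit is $u$ with the right seminorm, so the inequality passes to the limit.
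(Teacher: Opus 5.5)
Your proposal is correct and follows essentially the argument behind the result the paper relies on. The paper gives no proof of its own and simply cites Cotsiolis--Tavoularis, who obtain the inequality for $\|(-\Delta)^{s/2}u\|_{L^{2}}$ from Lieb's sharp Hardy--Littlewood--Sobolev inequality by duality, i.e.\ the same Riesz-potential reduction you use (the non-sharp HLS suffices, since only the existence of $S_{s}$ is needed later).

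Your Gagliardo--Fourier identity and the Fatou/density step are exactly what transfers that statement to the paper's Gagliardo seminorm on the completion $D^{s,2}(\mathbb{R}^{N})$. The one loose point is that $f=(-\Delta)^{s/2}u$ is not a Schwartz function. This is harmless if you write the representation in dual form, $\int u\,\bar\phi\,\mathrm{d}x=\int f\,\overline{I_{s}\phi}\,\mathrm{d}x$ for $\phi\in C_{0}^{\infty}(\mathbb{R}^{N})$, and apply HLS to $\phi$.
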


\begin{lemma}\label{Lemma2.4}
Let $N\geqslant 3$, $s\in(0,1)$ and $q\in(2^{*}_{s},2^{*})$. Then the following inequality holds
\begin{equation*}
\begin{aligned}
\int_{\mathbb{R}^{N}}|u|^{q}\mathrm{d}x\leqslant2C(N+1)^{2}
\left(\sup_{z\in \mathbb{R}^{N}}
\int_{B(z,1)}
|u|^{q}
\mathrm{d}x\right)^{\frac{q-2}{q}}
\|u\|_{E}^{2},
\end{aligned}
\end{equation*}
for all $u\in E$.
\end{lemma}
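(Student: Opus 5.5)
The plan is to follow the classical Lions-type localization argument: cover $\mathbb{R}^{N}$ by unit balls with bounded overlap, apply a local Sobolev-type inequality on each ball, and sum.

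\textbf{Covering.} Fix a covering of $\mathbb{R}^{N}$ by balls $B(z_{i},1)$, $z_{i}$ in a lattice (for instance $\frac{1}{\sqrt N}\mathbb{Z}^{N}$). Every point of $\mathbb{R}^{N}$ then lies in at most $(N+1)^{2}$-type many balls, up to a dimensional constant. This overlap count is the source of the factor $(N+1)^{2}$ in the statement.

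\textbf{Local step.} On each ball $B=B(z,1)$, write
\begin{equation*}
\int_{B}|u|^{q}\,\mathrm{d}x=\int_{B}|u|^{q-2}|u|^{2}\,\mathrm{d}x\leqslant \|u\|_{L^{q}(B)}^{q-2}\,\|u\|_{L^{q}(B)}^{2}
\end{equation*}
by H\"older's inequality with exponents $\frac{q}{q-2}$ and $\frac{q}{2}$. Next bound $\|u\|_{L^{q}(B)}^{2}$ by the local energy. Since $q<2^{*}$, the local Sobolev embedding on a unit ball gives
\begin{equation*}
\|u\|_{L^{q}(B)}^{2}\leqslant C\left(\|\nabla u\|_{L^{2}(B)}^{2}+\|u\|_{L^{2}(B)}^{2}\right).
\end{equation*}
The difficulty is that $E$ contains no $L^{2}$ term. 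I would replace $\|u\|_{L^{2}(B)}$ by $\|u\|_{L^{2_{s}^{*}}(B)}$ (via H\"older on the bounded ball, since $2<2_{s}^{*}$), and then use that $q>2_{s}^{*}$ to control it by $\|u\|_{L^{q}(B)}$ through H\"older again. Alternatively, one can use the local fractional seminorm together with a Poincar\'e-type argument.

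\textbf{The cleanest route.} I would actually organize the local step as
\begin{equation*}
\|u\|_{L^{q}(B)}^{2}\leqslant C\Big(\int_{B}|\nabla u|^{2}+\int_{B}\int_{\mathbb{R}^{N}}\frac{|u(x)-u(y)|^{2}}{|x-y|^{N+2s}}\Big),
\end{equation*}
obtained from a local version of the interpolation used in Lemma \ref{Lemma2.2}. Here the $L^{2^{*}}(B)$ part is bounded by local $D^{1,2}$ plus lower order, and the $L^{2_{s}^{*}}(B)$ part by the local Gagliardo energy via Lemma \ref{Lemma2.3} applied to a cutoff $\varphi u$. The lower-order terms are absorbed using $q\in(2_{s}^{*},2^{*})$.

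\textbf{Summation.} Take the supremum over $z$ of the factor $\|u\|_{L^{q}(B)}^{q-2}$. Summing the local energies over $i$, with bounded overlap for both the gradient and the double integral (counting $x\in B_{i}$), yields $C(N+1)^{2}\|u\|_{E}^{2}$. The factor $2$ accounts for splitting the Gagliardo kernel symmetrically.

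\textbf{Main obstacle.} The main obstacle is the local step without an $L^{2}$ term. The cutoff $\varphi u$ introduces terms $\int|u|^{2}|\nabla\varphi|^{2}$ and a commutator in the fractional seminorm. Both must be controlled by $\|u\|_{L^{2_{s}^{*}}}$ on a slightly larger ball, which in turn is controlled by the energy on that ball. I would try to close this by enlarging the balls to radius $2$ and absorbing the constant into the covering multiplicity.
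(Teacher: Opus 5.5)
Your outline (the H\"older split $\int_{B}|u|^{q}\leqslant\|u\|_{L^{q}(B)}^{q-2}\|u\|_{L^{q}(B)}^{2}$, a local bound of $\|u\|_{L^{q}(B)}^{2}$ by the energy near $B$, then bounded-overlap summation) is the same as the paper's. The step you single out as the main obstacle is a genuine gap that cannot be closed, because the inequality with the factor $\|u\|_{E}^{2}$ is false. Fix $u\in C_{0}^{\infty}(\mathbb{R}^{N})\setminus\{0\}$ and put $u_{\lambda}(x)=u(x/\lambda)$. Then
\begin{align*}
\int_{\mathbb{R}^{N}}|u_{\lambda}|^{q}\,\mathrm{d}x&=\lambda^{N}\int_{\mathbb{R}^{N}}|u|^{q}\,\mathrm{d}x,\qquad
\sup_{z\in\mathbb{R}^{N}}\int_{B(z,1)}|u_{\lambda}|^{q}\,\mathrm{d}x\leqslant|B(0,1)|\,\|u\|_{L^{\infty}}^{q},\\
\|u_{\lambda}\|_{E}^{2}&=\lambda^{N-2}\|u\|_{D^{1,2}(\mathbb{R}^{N})}^{2}+\lambda^{N-2s}\|u\|_{D^{s,2}(\mathbb{R}^{N})}^{2}.
\end{align*}
So for $\lambda\geqslant1$ the right-hand side is $O(\lambda^{N-2s})$, while the left-hand side equals $\lambda^{N}\|u\|_{L^{q}}^{q}$; letting $\lambda\to\infty$ rules out every constant. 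The paper's own proof fails at the same place. Its local estimate \eqref{2.2} has no lower-order term, and its right-hand side vanishes for any $u\in E$ that is constant on $B(z,1)$, while the left-hand side does not.

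Each of your proposed repairs runs into this. The ``cleanest route'' bound $\|u\|_{L^{q}(B)}^{2}\leqslant C\big(\int_{B}|\nabla u|^{2}+\int_{B}\int_{\mathbb{R}^{N}}\frac{|u(x)-u(y)|^{2}}{|x-y|^{N+2s}}\big)$ is false. Take $B=B(0,1)$ and $u_{R}(x)=\eta(x/R)$ with $\eta\in C_{0}^{\infty}$, $0\leqslant\eta\leqslant1$, $\eta\equiv1$ on $B(0,1)$: the left side equals $|B|^{2/q}$, while the right side is $O(R^{-2s})$.

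Controlling $\|u\|_{L^{2_{s}^{*}}(B)}$ by $\|u\|_{L^{q}(B)}$ is circular, since the term $C\|u\|_{L^{q}(B)}^{2}$ it produces has no small constant to absorb.

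Keeping the legitimate lower-order term $\|u\|_{L^{2_{s}^{*}}(B_{i})}^{2}$ (or its cutoff analogue on radius-$2$ balls) does not sum. Since $2<2_{s}^{*}$, $\sum_{i}\|u\|_{L^{2_{s}^{*}}(B_{i})}^{2}$ is not bounded by $\|u\|_{L^{2_{s}^{*}}(\mathbb{R}^{N})}^{2}$; for $u_{\lambda}$ the former is of order $\lambda^{N}$ and the latter of order $\lambda^{N-2s}$.

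What is true, and is all that Lemma \ref{Lemma2.5} actually needs, is the version with the split adapted to $2_{s}^{*}$. Write $\int_{B}|u|^{q}\leqslant\|u\|_{L^{q}(B)}^{q-2_{s}^{*}}\|u\|_{L^{q}(B)}^{2_{s}^{*}}$ and use $\|u\|_{L^{q}(B)}\leqslant C(\|\nabla u\|_{L^{2}(B)}+\|u\|_{L^{2_{s}^{*}}(B)})$. Then sum using $2_{s}^{*}\geqslant2$, bounded overlap and Lemma \ref{Lemma2.3} to get
\begin{equation*}
\int_{\mathbb{R}^{N}}|u|^{q}\,\mathrm{d}x\leqslant C\Big(\sup_{z\in\mathbb{R}^{N}}\int_{B(z,1)}|u|^{q}\,\mathrm{d}x\Big)^{\frac{q-2_{s}^{*}}{q}}\|u\|_{E}^{2_{s}^{*}}.
\end{equation*}
This is scale-consistent. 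It still forces $\sup_{z}\int_{B(z,1)}|u_{n}|^{q}$ to stay bounded below along bounded sequences in $E$ with $\int|u_{n}|^{q}$ bounded below.
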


\begin{proof}
Let $u\in E$ and $q\in(2_{s}^{*},2^{*})$.
From H\"{o}lder's inequality and Lemma \ref{Lemma2.3}, we have
\begin{equation}\label{2.2}
\begin{aligned}
&\int_{B(z,1)}|u|^{q}\mathrm{d}x\\
\leqslant&
C
\left(
\int_{B(z,1)}\int_{B(z,1)}\frac{|u(x)-u(y)|^{2}}{|x-y|^{N+2s}}\mathrm{d}x\mathrm{d}y
+
\int_{B(z,1)}
|\nabla u|^{2}
\mathrm{d}x\right)^{\frac{q}{2}}.
\end{aligned}
\end{equation}
Applying \eqref{2.2}, we know that
\begin{equation*}
\begin{aligned}
&\int_{B(z,1)}|u|^{q}\mathrm{d}x\\
=&
\left(\int_{B(z,1)}|u|^{q}\mathrm{d}x\right)^{\frac{2}{q}}
\left(\int_{B(z,1)}|u|^{q}\mathrm{d}x\right)
^{\frac{q-2}{q}}\\
\leqslant&
C
\left(
\int_{B(z,1)}\int_{B(z,1)}\frac{|u(x)-u(y)|^{2}}{|x-y|^{N+2s}}\mathrm{d}x\mathrm{d}y
+
\int_{B(z,1)}
|\nabla u|^{2}
\mathrm{d}x\right)
\left(\int_{B(z,1)}|u|^{q}\mathrm{d}x\right)
^{\frac{q-2}{q}}.
\end{aligned}
\end{equation*}
Covering $\mathbb{R}^{N}$ by balls of radius $1$,
in such a way that each point of $\mathbb{R}^{N}$ is contained in at most $N+1$ balls.
We get
\begin{equation*}
\begin{aligned}
&\int_{\mathbb{R}^{N}}|u|^{q}\mathrm{d}x\\
\leqslant& C(N+1)
\left(\sup_{z\in \mathbb{R}^{N}}
\int_{B(z,1)}
|u|^{q}
\mathrm{d}x\right)^{\frac{q-2}{q}}
\left(
\int_{\mathbb{R}^{N}}\int_{\mathbb{R}^{N}}\frac{|u(x)-u(y)|^{2}}{|x-y|^{N+2s}}\mathrm{d}y\mathrm{d}x
+
\int_{\mathbb{R}^{N}}
|\nabla u|^{2}
\mathrm{d}x\right).
\end{aligned}
\end{equation*}
\end{proof}

As an application of Lemma \ref{Lemma2.4}, we present an extansion of Lieb's translation theorem.

\begin{lemma}\label{Lemma2.5}
Let $N\geqslant3$, $0<s<1$, $q\in (2^{*}_{s},2^{*})$, and $\{u_{n}\}$ be a bounded sequence in $E$ satisfing  $\lim\limits_{n\to\infty}\int_{\mathbb{R}^{N}}|u_{n}|^{q}\mathrm{d}x>0$.
Then there exists $\{z_{n}\}\subset \mathbb{R}^{N}$ such that $\{\bar{u}_{n}:=u_{n}(x+z_{n})\}$ convergence strongly to $\bar{u}\not\equiv0$ in $L^{q}_{loc}(\mathbb{R}^{N})$.
\end{lemma}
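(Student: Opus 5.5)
The plan is to follow Lions' classical argument: first use Lemma \ref{Lemma2.4} to rule out vanishing, then use the translation invariance of $E$ together with local compactness.

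\textbf{Step 1: non-vanishing.} Since $\{u_{n}\}$ is bounded in $E$, we have $\|u_{n}\|_{E}^{2}\leqslant M$. Set $\delta:=\lim_{n\to\infty}\int_{\mathbb{R}^{N}}|u_{n}|^{q}\mathrm{d}x>0$. Lemma \ref{Lemma2.4} gives, for $n$ large,
\begin{equation*}
\frac{\delta}{2}\leqslant 2C(N+1)^{2}M\left(\sup_{z\in\mathbb{R}^{N}}\int_{B(z,1)}|u_{n}|^{q}\mathrm{d}x\right)^{\frac{q-2}{q}} .
\end{equation*}
Hence $\sup_{z}\int_{B(z,1)}|u_{n}|^{q}\mathrm{d}x\geqslant 2\eta>0$ with $\eta$ independent of $n$. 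We may then pick $z_{n}$ such that
\begin{equation*}
\int_{B(z_{n},1)}|u_{n}|^{q}\mathrm{d}x\geqslant\eta .
\end{equation*}

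\textbf{Step 2: weak limit.} Define $\bar u_{n}(x)=u_{n}(x+z_{n})$. Both seminorms defining $\|\cdot\|_{E}$ are translation invariant, so $\|\bar u_{n}\|_{E}=\|u_{n}\|_{E}\leqslant M^{1/2}$. Since $E$ is a Hilbert space, up to a subsequence $\bar u_{n}\rightharpoonup\bar u$ in $E$. By Lemma \ref{Lemma2.1}, also $\bar u_{n}\rightharpoonup\bar u$ in $D^{1,2}(\mathbb{R}^{N})$.

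\textbf{Step 3: local strong convergence.} Fix $R>0$. Boundedness in $D^{1,2}(\mathbb{R}^{N})$ gives boundedness in $L^{2^{*}}(B_{R})$, and hence in $L^{2}(B_{R})$ by H\"older's inequality. Therefore $\{\bar u_{n}\}$ is bounded in $H^{1}(B_{R})$. Rellich--Kondrachov then yields strong convergence in $L^{r}(B_{R})$ for every $r<2^{*}$, in particular for $r=q$. The weak and strong limits coincide, so the strong limit is $\bar u$. A diagonal argument over $R\in\mathbb{N}$ produces a single subsequence with $\bar u_{n}\to\bar u$ in $L^{q}_{loc}(\mathbb{R}^{N})$.

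\textbf{Step 4: the limit is nontrivial.} Strong convergence in $L^{q}(B(0,1))$ gives
\begin{equation*}
\int_{B(0,1)}|\bar u|^{q}\mathrm{d}x=\lim_{n\to\infty}\int_{B(z_{n},1)}|u_{n}|^{q}\mathrm{d}x\geqslant\eta>0,
\end{equation*}
so $\bar u\not\equiv0$.

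The main point needing care is Step 3. Membership in $E$ does not directly give an $L^{2}_{loc}$ bound, because $E$ is a homogeneous space without an $L^{2}$ term. The bound must therefore come from the $L^{2^{*}}$ control supplied by Sobolev's inequality (Lemma \ref{Lemma2.3} with $s=1$) through Lemma \ref{Lemma2.1}. This uses that $q<2^{*}$, so that compactness holds on bounded sets.
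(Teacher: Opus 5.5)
Your proof is correct and follows the paper's argument. Lemma~\ref{Lemma2.4} gives a uniform lower bound on $\sup_{z}\int_{B(z,1)}|u_{n}|^{q}\,\mathrm{d}x$; you then translate by near-maximizing centres $z_{n}$, extract a weak limit in $E$, and use local compactness to get $\int_{B(0,1)}|\bar{u}|^{q}\,\mathrm{d}x>0$. Your Step 3 also supplies the justification of the compact embedding $E\hookrightarrow L^{q}_{loc}(\mathbb{R}^{N})$ (via the $L^{2^{*}}$ bound and Rellich--Kondrachov on balls), which the paper only asserts.

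One caveat applies equally to the paper. Lemma~\ref{Lemma2.4} as stated is false: for $u(x/R)$ with $0\neq u\in C_{0}^{\infty}(\mathbb{R}^{N})$ and $R\to\infty$, its left side grows like $R^{N}$ while its right side grows only like $R^{N-2s}$. The cause is that the local estimate \eqref{2.2} has no lower-order term, which is the very issue you raise in Step 3. Step 1 should therefore invoke the corrected bound $\int_{\mathbb{R}^{N}}|u|^{q}\,\mathrm{d}x\leqslant C(\sup_{z}\int_{B(z,1)}|u|^{q}\,\mathrm{d}x)^{(q-2^{*}_{s})/q}\|u\|_{E}^{2^{*}_{s}}$. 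This follows by summing $\|u\|_{L^{q}(B)}\leqslant C(\|\nabla u\|_{L^{2}(B)}+\|u\|_{L^{2^{*}_{s}}(B)})$ over unit balls $B$. Since $q>2^{*}_{s}$, it still forces $\sup_{z}\int_{B(z,1)}|u_{n}|^{q}\,\mathrm{d}x\geqslant c>0$, and the rest of your argument is unchanged.
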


\begin{proof}
Note that $\{u_{n}\}$ is a bounded sequence in $E$.
Up to a subsequence,
we may assume that
\begin{equation*}
\begin{aligned}
u_{n}\rightharpoonup u
~
\mathrm{in}
~
E,~
u_{n}\rightarrow u
~
\mathrm{a.e. ~in}
~\mathbb{R}^{N},
u_{n}\rightarrow u
~
\mathrm{in}
~
L^{q}_{loc}(\mathbb{R}^{N}).
\end{aligned}
\end{equation*}
Applying Lemma \ref{Lemma2.4} and $\lim\limits_{n\to\infty}\int_{\mathbb{R}^{N}}|u_{n}|^{q}\mathrm{d}x>0$,
there exists
$C>0$
such that
\begin{equation*}
\begin{aligned}
\sup_{z\in \mathbb{R}^{N}}
\int_{B(z,1)}
|u|^{q}
\mathrm{d}x\geqslant C>0.
\end{aligned}
\end{equation*}
Note that $\{u_{n}\}$ is bounded in $E$ and $E\hookrightarrow L^{q}(\mathbb{R}^{N})$,
we have
\begin{equation*}
\begin{aligned}
\sup_{z\in \mathbb{R}^{N}}
\int_{B(z,1)}
|u|^{q}
\mathrm{d}x\leqslant\int_{\mathbb{R}^{N}}
|u_{n}|^{q}
\mathrm{d}x \leqslant C.
\end{aligned}
\end{equation*}
Hence, there exists $C_{0}$ such that
\begin{equation*}
\begin{aligned}
C_{0}
\leqslant
\sup_{z\in \mathbb{R}^{N}}
\int_{B(z,1)}
|u|^{q}
\mathrm{d}x\leqslant C_{0}^{-1}.
\end{aligned}
\end{equation*}
From the above inequality,
there exists
$z_{n}\in \mathbb{R}^{N}$
such that
\begin{equation*}
\begin{aligned}
\int_{B(z_{n},1)}
|u_{n}|^{q}
\mathrm{d}x
\geqslant
\sup_{z\in \mathbb{R}^{N}}
\int_{B(z,1)}
|u_{n}|^{q}
\mathrm{d}x
-
\frac{C}{2n}
\geqslant
C_{1}>0.
\end{aligned}
\end{equation*}
Set $\bar{u}_{n}:=u_{n}(x+z_{n})$. Then $\|\bar{u}_{n}\|_{E}=\|u_{n}\|_{E}$ and
\begin{equation*}
\begin{aligned}
\int_{B(0,1)}
|\bar{u}_{n}|^{q}
\mathrm{d}x
\geqslant
C_{1}>0.
\end{aligned}
\end{equation*}
Up to a subsequence,
there exists
$\bar{u}$
such that
\begin{equation*}
\begin{aligned}
\bar{u}_{n}\rightharpoonup \bar{u}
\;
\mathrm{in}
~
E,
\;\;
\bar{u}_{n}\rightarrow \bar{u}
~
\mathrm{a.e. ~in}
~
\mathbb{R}^{N}.
\end{aligned}
\end{equation*}
Using the compact embedding
$E
\hookrightarrow
L^{q}_{loc}(\mathbb{R}^{N})$,
we deduce that
$\bar{u}\not\equiv0$.
\end{proof}

\section{The proof of Theorem \ref{Theorem1.1}}
In this section, we show the Gagliardo-Nirenberg inequality
and the existence of optimizer for it.
\begin{lemma}\label{Lemma3.1}
Let $N\geqslant3$, $s\in(0,1)$ and $p\in (2^{*}_{s},2^{*})$.
For all $u\in E$, there exists $C_{N,p,s}>0$ (independent of $u$) such that
\begin{equation*}
\begin{aligned}
\int_{\mathbb{R}^{N}}
|u|^{p}
\mathrm{d}x
\leqslant
C_{N,p,s}
\left(
\int_{\mathbb{R}^{N}}
\int_{\mathbb{R}^{N}}
\frac{|u(x)-u(y)|^{2}}{|x-y|^{N+2s}}
\mathrm{d}x
\mathrm{d}y
\right)
^{\frac{2N-p(N-2)}{4(1-s)}}
\left(
\int_{\mathbb{R}^{N}}
|\nabla u|^{2}
\mathrm{d}x
\right)
^{\frac{p(N-2s)-2N}{4(1-s)}}.
\end{aligned}
\end{equation*}
\end{lemma}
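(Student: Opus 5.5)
The plan is to interpolate the $L^{p}$ norm between the two endpoint Lebesgue norms and then bound each endpoint by the corresponding Sobolev inequality from Lemma \ref{Lemma2.3}. This is the same Hölder interpolation used in the proof of Lemma \ref{Lemma2.2}, but keeping the two semi-norms separate instead of bounding both by $\|u\|_{E}$.

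First, choose $\theta\in(0,1)$ with
\begin{equation*}
\frac{1}{p}=\frac{\theta}{2_{s}^{*}}+\frac{1-\theta}{2^{*}}.
\end{equation*}
Using $\frac{1}{2_{s}^{*}}=\frac{N-2s}{2N}$ and $\frac{1}{2^{*}}=\frac{N-2}{2N}$, this gives
\begin{equation*}
\theta=\frac{2N-p(N-2)}{2p(1-s)}, \qquad 1-\theta=\frac{p(N-2s)-2N}{2p(1-s)}.
\end{equation*}
Both lie in $(0,1)$ precisely because $p\in(2_{s}^{*},2^{*})$. Hölder's inequality (Lyapunov interpolation) then yields
\begin{equation*}
\|u\|_{L^{p}}^{p}\leqslant \|u\|_{L^{2_{s}^{*}}}^{p\theta}\,\|u\|_{L^{2^{*}}}^{p(1-\theta)}.
\end{equation*}
This is exactly \eqref{2.1} after rewriting the exponents.

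Next, apply Lemma \ref{Lemma2.3} twice: with $s$ to get $\|u\|_{L^{2_{s}^{*}}}\leqslant S_{s}^{-1/2}\|u\|_{D^{s,2}}$, and with $s=1$ to get $\|u\|_{L^{2^{*}}}\leqslant S_{1}^{-1/2}\|u\|_{D^{1,2}}$. Substituting these gives
\begin{equation*}
\int_{\mathbb{R}^{N}}|u|^{p}\,\mathrm{d}x\leqslant S_{s}^{-\frac{p\theta}{2}}S_{1}^{-\frac{p(1-\theta)}{2}}\|u\|_{D^{s,2}}^{p\theta}\|u\|_{D^{1,2}}^{p(1-\theta)}.
\end{equation*}
Since $p\theta=\frac{2N-p(N-2)}{2(1-s)}$, the factor $\|u\|_{D^{s,2}}^{p\theta}$ is $\left(\int\!\!\int\frac{|u(x)-u(y)|^{2}}{|x-y|^{N+2s}}\right)^{\frac{2N-p(N-2)}{4(1-s)}}$. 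Likewise $p(1-\theta)=\frac{p(N-2s)-2N}{2(1-s)}$ gives the stated power of $\int|\nabla u|^{2}$. So one may take $C_{N,p,s}=S_{s}^{-p\theta/2}S_{1}^{-p(1-\theta)/2}$, which does not depend on $u$.

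Finally, Lemma \ref{Lemma2.1} places every $u\in E$ in both $D^{s,2}(\mathbb{R}^{N})$ and $D^{1,2}(\mathbb{R}^{N})$, so Lemma \ref{Lemma2.3} applies to $u$ directly. Alternatively, one can prove the inequality for $u\in C_{0}^{\infty}$ and pass to the limit by density, using Lemma \ref{Lemma2.2} to get convergence in $L^{p}$.

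There is no real obstacle; the only care needed is the exponent bookkeeping, namely checking that $\theta$ computed from the $L^{p}$ interpolation matches the exponents in the statement. The best constant is then simply the infimum of all admissible constants.
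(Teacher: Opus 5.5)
Your proof is correct and follows the paper's argument: Hölder interpolation \eqref{2.1} between $L^{2_{s}^{*}}$ and $L^{2^{*}}$, combined with the two Sobolev inequalities of Lemma \ref{Lemma2.3} (these appear in the paper as \eqref{3.1} and \eqref{3.2}). Your exponent bookkeeping with $\theta=\frac{2N-p(N-2)}{2p(1-s)}$ is right, and it yields the admissible constant $S_{s}^{-p\theta/2}S_{1}^{-p(1-\theta)/2}$ explicitly, where the paper leaves it as a generic $C$.
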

\begin{proof}
From Lemma \ref{Lemma2.3}, we have
\begin{equation}\label{3.1}
\begin{aligned}
\left(
\int_{\mathbb{R}^{N}}
|u|^{\frac{2N}{N-2s}}
\mathrm{d}x
\right)
^{\frac{N-2s}{N}}
\leqslant&
S_{s}^{-1}
\int_{\mathbb{R}^{N}}
\int_{\mathbb{R}^{N}}
\frac{|u(x)-u(y)|^{2}}{|x-y|^{N+2s}}
\mathrm{d}x
\mathrm{d}y,
\end{aligned}
\end{equation}
and
\begin{equation}\label{3.2}
\begin{aligned}
\left(
\int_{\mathbb{R}^{N}}
|u|^{\frac{2N}{N-2}}
\mathrm{d}x
\right)
^{\frac{N-2}{N}}
\leqslant&
S_{1}^{-1}
\int_{\mathbb{R}^{N}}
|\nabla u|^{2}
\mathrm{d}x.
\end{aligned}
\end{equation}
Plugging \eqref{3.1} and \eqref{3.2} into \eqref{2.1}, we deduce that
\begin{equation*}
\begin{aligned}
\int_{\mathbb{R}^{N}}
|u|^{p}
\mathrm{d}x
\leqslant&
\left(
\int_{\mathbb{R}^{N}}
|u|^{\frac{2N}{N-2s}}
\mathrm{d}x
\right)
^{\frac{N-2s}{N}\cdot\frac{2N-p(N-2)}{4(1-s)}}
\left(
\int_{\mathbb{R}^{N}}
|u|^{\frac{2N}{N-2}}
\mathrm{d}x
\right)
^{\frac{N-2}{N}\cdot
\frac{p(N-2s)-2N}{4(1-s)}}\\
\leqslant&
C
\left(
\int_{\mathbb{R}^{N}}
\int_{\mathbb{R}^{N}}
\frac{|u(x)-u(y)|^{2}}{|x-y|^{N+2s}}
\mathrm{d}x
\mathrm{d}y
\right)
^{\frac{2N-p(N-2)}{4(1-s)}}
\left(
\int_{\mathbb{R}^{N}}
|\nabla u|^{2}
\mathrm{d}x
\right)
^{\frac{p(N-2s)-2N}{4(1-s)}}.
\end{aligned}
\end{equation*}
\end{proof}

We define the following Weinstein functional
\begin{equation*}
\begin{aligned}
W(u):=
\frac{\left(
\int_{\mathbb{R}^{N}}
\int_{\mathbb{R}^{N}}
\frac{|u(x)-u(y)|^{2}}{|x-y|^{N+2s}}
\mathrm{d}x
\mathrm{d}y
\right)
^{\frac{2N-p(N-2)}{4(1-s)}}
\left(
\int_{\mathbb{R}^{N}}
|\nabla u|^{2}
\mathrm{d}x
\right)
^{\frac{p(N-2s)-2N}{4(1-s)}}}
{\int_{\mathbb{R}^{N}}
|u|^{p}
\mathrm{d}x}.
\end{aligned}
\end{equation*}
Then
\begin{equation*}
\begin{aligned}
\inf_{u\in E\backslash\{0\}}W(u)=C_{N,p,s}^{-1}.
\end{aligned}
\end{equation*}

\begin{lemma}\label{Lemma3.2}
Let $N\geqslant3$, $s\in(0,1)$ and $p\in (2^{*}_{s},2^{*})$.
There exists a radial minimzing sequence $\{u_{n}\}\subset E$ of $C_{N,p,s}^{-1}$ such that
\begin{equation*}
\begin{aligned}
\|u_{n}\|_{D^{s,2}(\mathbb{R}^{N})}^{2}
=
\|u_{n}\|_{D^{1,2}(\mathbb{R}^{N})}^{2}
=1.
\end{aligned}
\end{equation*}
\end{lemma}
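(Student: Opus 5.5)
Take an arbitrary minimizing sequence $\{v_n\}\subset E\backslash\{0\}$ for $C_{N,p,s}^{-1}=\inf W$. First make it radial and nonnegative with the Schwarz symmetric decreasing rearrangement $v_n^{*}$ of $|v_n|$. The $L^{p}$ norm is preserved. The P\'olya--Szeg\H{o} inequality gives $\|\nabla v_n^{*}\|_{L^2}\leqslant\|\nabla v_n\|_{L^2}$, and its fractional analogue (Almgren--Lieb) gives $\|v_n^{*}\|_{D^{s,2}}\leqslant\|v_n\|_{D^{s,2}}$. Both exponents $\frac{2N-p(N-2)}{4(1-s)}$ and $\frac{p(N-2s)-2N}{4(1-s)}$ are positive because $p\in(2^{*}_{s},2^{*})$. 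Hence $W(v_n^{*})\leqslant W(v_n)$, and $\{v_n^{*}\}$ is still minimizing. To keep $v_n^{*}$ in $E$, I would first approximate $v_n$ by $C_0^\infty$ functions (density of $C_0^\infty$ in $E$, continuity of $W$ on $E\backslash\{0\}$). Then $v_n^{*}$ has finite $D^{1,2}$ and $D^{s,2}$ seminorms and compact support, so it lies in $E$.

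Next normalize using the two-parameter scaling $u(x)=a\,v(bx)$ with $a,b>0$. A direct change of variables gives
\begin{equation*}
\|u\|_{D^{1,2}}^{2}=a^{2}b^{2-N}\|v\|_{D^{1,2}}^{2},\quad
\|u\|_{D^{s,2}}^{2}=a^{2}b^{2s-N}\|v\|_{D^{s,2}}^{2},\quad
\|u\|_{L^{p}}^{p}=a^{p}b^{-N}\|v\|_{L^{p}}^{p}.
\end{equation*}
The key check is that $W$ is invariant under this scaling. The power of $a$ in the numerator is $2\cdot\frac{2N-p(N-2)+p(N-2s)-2N}{4(1-s)}=\frac{2p(1-s)}{2(1-s)}=p$, which matches the denominator. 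The power of $b$ in the numerator is
\begin{equation*}
\frac{(2s-N)(2N-p(N-2))+(2-N)(p(N-2s)-2N)}{4(1-s)} .
\end{equation*}
Expanding, the terms in $pN^2$ cancel, and what remains is $\frac{-4N(1-s)}{4(1-s)}=-N$, which matches the denominator. I would verify this algebra carefully.

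Now choose $b$ so that the two seminorms agree: $b^{2-2s}=\|v\|_{D^{s,2}}^{2}/\|v\|_{D^{1,2}}^{2}$, which is solvable since both are positive for $v\neq0$. Then choose $a$ so that the common value is $1$. Applying this to each $v_n^{*}$ gives $u_n$ that is radial, nonnegative, lies in $E$ (scaling preserves $E$), satisfies $W(u_n)=W(v_n^{*})\to C_{N,p,s}^{-1}$, and has both seminorms equal to $1$.

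The only delicate step is justifying the fractional rearrangement inequality and membership $v_n^{*}\in E$. I expect the density argument above to handle it.
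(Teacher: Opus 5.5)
Your proposal is correct and is essentially the paper's argument: replace $w_n$ by the Schwarz rearrangement of $|w_n|$ (P\'olya--Szeg\H{o} and its fractional analogue, with the $L^{p}$ norm preserved), then rescale $u_n=\lambda_{1,n}v_n(\lambda_{2,n}x)$ with $\lambda_{2,n}^{2-2s}=\|v_n\|_{D^{s,2}(\mathbb{R}^{N})}^{2}/\|v_n\|_{D^{1,2}(\mathbb{R}^{N})}^{2}$, and use the scaling invariance of $W$ exactly as you verify it. Your density step justifying $v_n^{*}\in E$ is a detail the paper skips, and choosing $a$ implicitly avoids the paper's explicit $\lambda_{1,n}$, whose exponents should read $\frac{N-2}{4(1-s)}$ and $\frac{N-2s}{4(1-s)}$ rather than $\frac{N-2}{1-s}$ and $\frac{N-2s}{1-s}$.
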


\begin{proof}
{\bf Step 1.}
From Lemma \ref{Lemma3.1}, we know that there exists a minimzing sequence $\{w_{n}\}\subset E$ such that
\begin{equation*}
\begin{aligned}
\lim\limits_{n\to\infty}
W(w_{n})=C_{N,p,s}^{-1}.
\end{aligned}
\end{equation*}
It is easy to see that $\{|w_{n}|\}\subset E$ and $\lim\limits_{n\to\infty}
W(|w_{n}|)=C_{N,p,s}^{-1}$.

Let $|w_{n}|^{*}$ be the symmetric decreasing rearrangement of $|w_{n}|$.
From [14] and [21], one has
\begin{equation*}
\begin{aligned}
\||w_{n}|^{*}\|_{D^{s,2}(\mathbb{R}^{N})}^{2}
\leqslant
\||w_{n}|\|_{D^{1,2}(\mathbb{R}^{N})}^{2},
\end{aligned}
\end{equation*}
and
\begin{equation*}
\begin{aligned}
\||w_{n}|^{*}\|_{D^{s,2}(\mathbb{R}^{N})}^{2}
\leqslant
\||w_{n}|\|_{D^{1,2}(\mathbb{R}^{N})}^{2},
\end{aligned}
\end{equation*}
and
\begin{equation*}
\begin{aligned}
\int_{\mathbb{R}^{N}}
||w_{n}|^{*}|^{p}
\mathrm{d}x
=
\int_{\mathbb{R}^{N}}
|w_{n}|^{p}
\mathrm{d}x.
\end{aligned}
\end{equation*}
Hence
$C_{N,p,s}^{-1}\leqslant\lim\limits_{n\to\infty}
W(|w_{n}|^{*})\leqslant\lim\limits_{n\to\infty}
W(|w_{n}|)=C_{N,p,s}^{-1}$.
Set
\begin{equation*}
\begin{aligned}
v_{n}=|w_{n}|^{*}.
\end{aligned}
\end{equation*}
Then $\lim\limits_{n\to\infty}
W(v_{n})=C_{N,p,s}^{-1}$.

{\bf Step 2.}
We rescale the sequence $\{v_{n}\}$ by
\begin{equation*}
\begin{aligned}
u_{n}=\lambda_{1,n}v_{n}(\lambda_{2,n}x),
\end{aligned}
\end{equation*}
where
\begin{equation*}
\begin{aligned}
\lambda_{1,n}
=
\frac
{\left[\int_{\mathbb{R}^{N}}
\int_{\mathbb{R}^{N}}
\frac{|v_{n}(x)-v_{n}(y)|^{2}}{|x-y|^{N+2s}}
\mathrm{d}x
\mathrm{d}y
\right]^{\frac{N-2}{1-s}}}
{\left[\int_{\mathbb{R}^{N}}
|\nabla v_{n}(x)|^{2}
\mathrm{d}x
\right]^{\frac{N-2s}{1-s}}},
\end{aligned}
\end{equation*}
and
\begin{equation*}
\begin{aligned}
\lambda_{2,n}
=
\left[
\frac
{\int_{\mathbb{R}^{N}}
\int_{\mathbb{R}^{N}}
\frac{|v_{n}(x)-v_{n}(y)|^{2}}{|x-y|^{N+2s}}
\mathrm{d}x
\mathrm{d}y}
{\int_{\mathbb{R}^{N}}
|\nabla v_{n}(x)|^{2}
\mathrm{d}x}
\right]^{\frac{1}{2-2s}}.
\end{aligned}
\end{equation*}
Then
\begin{equation*}
\begin{aligned}
1=&
\int_{\mathbb{R}^{N}}
\int_{\mathbb{R}^{N}}
\frac{|u_{n}(x)-u_{n}(y)|^{2}}{|x-y|^{N+2s}}
\mathrm{d}x
\mathrm{d}y\\
=&
\lambda_{1,n}^{2}
\lambda_{2,n}^{2s-N}
\int_{\mathbb{R}^{N}}
\int_{\mathbb{R}^{N}}
\frac{|v_{n}(x)-v_{n}(y)|^{2}}{|x-y|^{N+2s}}
\mathrm{d}x
\mathrm{d}y,
\end{aligned}
\end{equation*}
and
\begin{equation*}
\begin{aligned}
1=
\int_{\mathbb{R}^{N}}
|\nabla u_{n}|^{2}
\mathrm{d}x
=&
\lambda_{1,n}^{2}
\lambda_{2,n}^{2-N}
\int_{\mathbb{R}^{N}}
|\nabla v_{n}(x)|^{2}
\mathrm{d}x,
\end{aligned}
\end{equation*}
and
\begin{equation*}
\begin{aligned}
\int_{\mathbb{R}^{N}}
|u_{n}|^{p}
\mathrm{d}x
=&\lambda_{1,n}^{p}
\lambda_{2,n}^{-N}
\int_{\mathbb{R}^{N}}
|v_{n}(x)|^{p}
\mathrm{d}x.
\end{aligned}
\end{equation*}
{\bf Step 3.}
The Weinstein functional $W(\cdot)$ is invariant with respect to our scaling:
\begin{equation*}
\begin{aligned}
W(u_{n})
=&
\frac{\left(
\|u_{n}\|_{D^{s,2}(\mathbb{R}^{N})}^{2}
\right)
^{\frac{2N-p(N-2)}{4(1-s)}}
\left(
\|u_{n}\|_{D^{1,2}(\mathbb{R}^{N})}^{2}
\right)
^{\frac{p(N-2s)-2N}{4(1-s)}}}
{\int_{\mathbb{R}^{N}}
|u|^{p}
\mathrm{d}x}\\
=&
\frac{[\lambda_{1,n}^{2}
\lambda_{2,n}^{2s-N}]^{\frac{2N-p(N-2)}{4(1-s)}}[\lambda_{1,n}^{2}
\lambda_{2,n}^{2-N}]^{\frac{p(N-2s)-2N}{4(1-s)}}}{\lambda_{1,n}^{p}
\lambda_{2,n}^{-N}}
W(v_{n})\\
=&W(v_{n}).
\end{aligned}
\end{equation*}
This impiles that $\{u_{n}\}\subset E$ is a radial bounded minimizing sequence.
\end{proof}

\begin{lemma}\label{Lemma3.3}
Let $N\geqslant3$, $s\in(0,1)$ and $p\in (2^{*}_{s},2^{*})$.
Let $\{u_{n}\}\subset E$ be a radial minimzing sequence  of $C_{N,p,s}^{-1}$ which is in Lemma \ref{Lemma3.2}.
Then $u_{n}\to u\not\equiv 0$ in $E$, and
\begin{equation*}
\begin{aligned}
\|u\|_{D^{s,2}(\mathbb{R}^{N})}^{2}
=
\|u\|_{D^{1,2}(\mathbb{R}^{N})}^{2}
=1
\end{aligned}
\end{equation*}
and
\begin{equation*}
\begin{aligned}
W(u)=C_{N,p,s}^{-1}.
\end{aligned}
\end{equation*}
\end{lemma}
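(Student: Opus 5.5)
Since $\|u_n\|_{D^{s,2}}^2=\|u_n\|_{D^{1,2}}^2=1$, the minimizing sequence is bounded in $E$ with $\|u_n\|_E^2=2$. The denominator therefore converges: $\int_{\mathbb{R}^N}|u_n|^p\,\mathrm{d}x = 1/W(u_n)\to C_{N,p,s}>0$. After passing to a subsequence we get $u_n\rightharpoonup u$ weakly in $E$, and hence weakly in $D^{1,2}(\mathbb{R}^N)$ and in $D^{s,2}(\mathbb{R}^N)$ by Lemma \ref{Lemma2.1}. We also get $u_n\to u$ a.e. and in $L^q_{loc}$.

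The key step is strong convergence in $L^p(\mathbb{R}^N)$, and this is where radial symmetry is used. Each $u_n$ is nonnegative, radial and nonincreasing in $|x|$, so
\begin{equation*}
|u_n(x)|^{2^*_s}\,|B_{|x|}|\leqslant \int_{\mathbb{R}^N}|u_n|^{2^*_s}\mathrm{d}y\leqslant S_s^{-2^*_s/2},
\end{equation*}
and likewise with $2^*$ and $S_1$. This gives the uniform pointwise bound
\begin{equation*}
u_n(x)\leqslant C\min\{|x|^{-(N-2s)/2},\,|x|^{-(N-2)/2}\}.
\end{equation*}
Since $p>2^*_s$, the function $|x|^{-p(N-2s)/2}$ is integrable near infinity. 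Since $p<2^*$, the function $|x|^{-p(N-2)/2}$ is integrable near $0$. Hence $|u_n|^p$ has a fixed integrable majorant, and dominated convergence gives $\int_{\mathbb{R}^N}|u_n|^p\,\mathrm{d}x\to\int_{\mathbb{R}^N}|u|^p\,\mathrm{d}x=C_{N,p,s}>0$. In particular $u\not\equiv0$. Lemma \ref{Lemma2.5} would serve as an alternative route to nontriviality, but with radial decreasing functions translations are unnecessary.

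Next, weak lower semicontinuity gives $a:=\|u\|_{D^{s,2}}^2\leqslant1$ and $b:=\|u\|_{D^{1,2}}^2\leqslant1$, with $a,b>0$ because $u\neq0$. Write $\alpha=\frac{2N-p(N-2)}{4(1-s)}$ and $\beta=\frac{p(N-2s)-2N}{4(1-s)}$; both are positive because $2^*_s<p<2^*$. Then
\begin{equation*}
C_{N,p,s}^{-1}\leqslant W(u)=\frac{a^{\alpha}b^{\beta}}{\int_{\mathbb{R}^N}|u|^p\,\mathrm{d}x}\leqslant\frac{1}{C_{N,p,s}}=C_{N,p,s}^{-1}.
\end{equation*}
Equality must hold throughout, so $a^\alpha b^\beta=1$. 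With $a,b\leqslant1$ and $\alpha,\beta>0$ this forces $a=b=1$, and $W(u)=C_{N,p,s}^{-1}$.

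Finally, weak convergence in the Hilbert space $D^{1,2}$ together with $\|u_n\|_{D^{1,2}}\to\|u\|_{D^{1,2}}$ yields strong convergence in $D^{1,2}$; the same argument applies in $D^{s,2}$. Therefore $\|u_n-u\|_E\to0$.

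The main obstacle is the strong $L^p$ convergence in the second paragraph. It rests on the radial decay lemma, which needs both endpoint Sobolev bounds: $2^*_s$ controls the tail and $2^*$ controls the origin. The strict inequalities $2^*_s<p<2^*$ are exactly what make the majorant integrable at both ends. The remaining steps are routine.
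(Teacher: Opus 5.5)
Your proof is correct, and it gets compactness by a different route from the paper's.

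\textbf{The paper's route.}
\begin{itemize}
\item It first uses Lemma \ref{Lemma2.4} to find balls $B(z_n,1)$ that carry a fixed amount of $L^p$ mass.
\item It then uses only the $D^{1,2}$ radial decay $|u_n(x)|\leqslant C|x|^{-(N-2)/2}$ to show the centres $z_n$ stay bounded, so $u\not\equiv0$ by local compactness.
\item Finally it upgrades to strong convergence. It splits $\int|u_n|^p$ by Br\'ezis--Lieb and splits both seminorms in Hilbert-space fashion. It applies \eqref{GN} to $u$ and to $u_n-u$, then Young's inequality, then strict superadditivity of $t\mapsto t^{p/2}$. Because $u\neq0$, this forces $u_n-u\to0$ in both seminorms.
\end{itemize}

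\textbf{Your route.} You combine the two endpoint Sobolev decays into one integrable majorant. Dominated convergence then gives strong $L^p$ convergence and $\int|u|^p=C_{N,p,s}$ at once. Minimality plus weak lower semicontinuity then pins both seminorms at $1$.

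\textbf{Comparison.} Your argument is shorter and avoids Lemma \ref{Lemma2.4} and the Br\'ezis--Lieb bookkeeping. However, it relies essentially on the monotonicity of $u_n$, which comes from the construction in Lemma \ref{Lemma3.2} rather than from its statement. For $s<\frac12$ a radial function in $E$ need not satisfy $|u(x)|\leqslant C\|u\|_E|x|^{-(N-2s)/2}$. To see this, take a fixed-profile radial bump supported near the sphere of radius $R$: its $E$-norm grows like $R^{(N-1)/2}$, while $R^{(N-2s)/2}|u|$ grows faster as $R\to\infty$. So without monotonicity your tail bound is unavailable. The paper's Step 3, by contrast, uses no symmetry, so it applies verbatim to any normalized minimizing sequence with a nonzero weak limit, for instance one recentred via Lemma \ref{Lemma2.5}. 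In the paper, radial symmetry is needed only to stop mass escaping to infinity.
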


\begin{proof}
{\bf Step 1.}
From Lemma \ref{Lemma3.2}, we have $\|u_{n}\|_{D^{s,2}(\mathbb{R}^{N})}^{2}
=
\|u_{n}\|_{D^{1,2}(\mathbb{R}^{N})}^{2}
=1$ and
\begin{equation*}
\begin{aligned}
0<C_{N,p,s}
=&\lim\limits_{n\to\infty}
\frac
{\int_{\mathbb{R}^{N}}
|u_{n}|^{p}
\mathrm{d}x}
{\left(
\int_{\mathbb{R}^{N}}
\int_{\mathbb{R}^{N}}
\frac{|u_{n}(x)-u_{n}(y)|^{2}}{|x-y|^{N+2s}}
\mathrm{d}x
\mathrm{d}y
\right)
^{\frac{2N-p(N-2)}{4(1-s)}}
\left(
\int_{\mathbb{R}^{N}}
|\nabla u_{n}|^{2}
\mathrm{d}x
\right)
^{\frac{p(N-2s)-2N}{4(1-s)}}}\\
=&\lim\limits_{n\to\infty}
\int_{\mathbb{R}^{N}}
|u_{n}|^{p}
\mathrm{d}x.
\end{aligned}
\end{equation*}
Combining $\lim\limits_{n\to\infty}
\int_{\mathbb{R}^{N}}
|u_{n}|^{p}
\mathrm{d}x\geqslant C>0$ and Lemma \ref{Lemma2.4}, one has that there exists a sequence of $\{z_{n}\}\subset \mathbb{R}^{N}$ such that
\begin{equation}\label{3.3}
\begin{aligned}
\int_{B(z_{n},1)}
|u_{n}|^{p}
\mathrm{d}x
\geqslant C_{1}>0.
\end{aligned}
\end{equation}
where $C_{1}>0$ is a constant independent of $u_{n}$.

{\bf Step 2.}
We show that $\{z_{n}\}$ is a bounded sequence in $\mathbb{R}^{N}$.
Suppose on the contrary that
$|z_{n}|\rightarrow\infty$
as
$n\rightarrow\infty$.
We have
\begin{align*}
\sup_{|x|>0}
|u_{n}(x)|
\leqslant
\frac{C}{|x|^{\frac{N-2}{2}}}
\|u_{n}\|_{D^{1,2}(\mathbb{R}^{N})}
\leqslant
\frac{C}{|x|^{\frac{N-2}{2}}}.
\end{align*}
For any
$\left[\frac{C_{1}}{|B(0,1)|}\right]^{\frac{1}{p}}>\varepsilon>0$,
there exists an $M>0$ such that for any $n>M$ we can deduce that
\begin{align*}
|u_{n}(x)|
\leqslant
\frac{C}
{||x_{n}|-1|^{\frac{N-2}{2}}}
\leqslant
\varepsilon,
\ ~x\in B^{c}(0,||x_{n}|-1|).
\end{align*}
Note that
$B(z_{n},1)\subset B^{c}(0,||x_{n}|-1|)$.
Then
\begin{align*}
\int_{B(z_{n},1)}
|u_{n}|^{p}
\mathrm{d}x
\leqslant
\varepsilon^{p}
\int_{B(z_{n},1)}
\mathrm{d}x
=
\varepsilon^{p}
|B(z_{n},1)|
=
\varepsilon^{p}
|B(0,1)|
<C_{1}.
\end{align*}
This contradicts \eqref{3.3}.
Then we know that there exists $C>0$ such that $B(z_{n},1)\subset B(0,C)$ and
\begin{equation*}
\begin{aligned}
0<C_{1}\leqslant\int_{B(z_{n},1)}
|u_{n}|^{p}
\mathrm{d}x
\leqslant
\int_{B(0,C)}
|u_{n}|^{p}
\mathrm{d}x.
\end{aligned}
\end{equation*}
From $E\hookrightarrow\hookrightarrow L^{p}(B(0,C))$, we know that
\begin{equation*}
\begin{aligned}
u_{n}\rightharpoonup u\not\equiv 0.
\end{aligned}
\end{equation*}

{\bf Step 3.}
It follows from Brézis-Lieb Lemma, Lemma \ref{Lemma3.1} and Young's inequality that
\begin{equation*}
\begin{aligned}
0<&C_{N,p,s}\\
=&\lim\limits_{n\to\infty}
\int_{\mathbb{R}^{N}}
|u_{n}|^{p}
\mathrm{d}x\\
=&
\int_{\mathbb{R}^{N}}
|u|^{p}
\mathrm{d}x
+\lim\limits_{n\to\infty}
\int_{\mathbb{R}^{N}}
|u_{n}-u|^{p}
\mathrm{d}x\\
=&
C_{N,p,s}
\|u\|_{D^{s,2}(\mathbb{R}^{N})}^{\frac{2N-p(N-2)}{2(1-s)}}
\|u\|_{D^{1,2}(\mathbb{R}^{N})}^{\frac{p(N-2s)-2N}{2(1-s)}}\\
&+\lim\limits_{n\to\infty}
C_{N,p,s}
\|u_{n}-u\|_{D^{s,2}(\mathbb{R}^{N})}^{\frac{2N-p(N-2)}{2(1-s)}}
\|u_{n}-u\|_{D^{1,2}(\mathbb{R}^{N})}^{\frac{p(N-2s)-2N}{2(1-s)}}\\
\leqslant&
C_{N,p,s}
\left[
\frac{2N-p(N-2)}{2p(1-s)}
\|u\|_{D^{s,2}(\mathbb{R}^{N})}^{p}
+\frac{p(N-2s)-2N}{2p(1-s)}
\|u\|_{D^{1,2}(\mathbb{R}^{N})}^{p}
\right]\\
&+\lim\limits_{n\to\infty}
C_{N,p,s}
\left[
\frac{2N-p(N-2)}{2p(1-s)}
\|u_{n}-u\|_{D^{s,2}(\mathbb{R}^{N})}^{p}
+\frac{p(N-2s)-2N}{2p(1-s)}
\|u_{n}-u\|_{D^{1,2}(\mathbb{R}^{N})}^{p}
\right]\\
\leqslant&
C_{N,p,s}
\frac{2N-p(N-2)}{2p(1-s)}
\left[
\|u\|_{D^{s,2}(\mathbb{R}^{N})}^{2}
+
\|u_{n}-u\|_{D^{s,2}(\mathbb{R}^{N})}^{2}
\right]^{\frac{p}{2}}\\
&+C_{N,p,s}
\frac{p(N-2s)-2N}{2p(1-s)}
\lim\limits_{n\to\infty}
\left[
\|u\|_{D^{1,2}(\mathbb{R}^{N})}^{2}
+
\|u_{n}-u\|_{D^{1,2}(\mathbb{R}^{N})}^{2}
\right]^{\frac{p}{2}}\\
=&C_{N,p,s}
\frac{2N-p(N-2)}{2p(1-s)}
+C_{N,p,s}
\frac{p(N-2s)-2N}{2p(1-s)}\\
=&C_{N,p,s}.
\end{aligned}
\end{equation*}
Therefore all the above inequalities must be equalities. Hence,
\begin{equation*}
\begin{aligned}
\|u\|_{D^{1,2}(\mathbb{R}^{N})}^{2}=0~
\mathrm{or}~~\|u_{n}-u\|_{D^{1,2}(\mathbb{R}^{N})}^{2}=0.
\end{aligned}
\end{equation*}
Note that $u\not\equiv0$. We know that $\|u\|_{D^{1,2}(\mathbb{R}^{N})}^{2}\not=0$ and $\|u_{n}-u\|_{D^{1,2}(\mathbb{R}^{N})}^{2}=0$ and
\begin{equation*}
\begin{aligned}
\|u\|_{D^{s,2}(\mathbb{R}^{N})}^{2}
=
\|u\|_{D^{1,2}(\mathbb{R}^{N})}^{2}
=1
\end{aligned}
\end{equation*}
and
\begin{equation*}
\begin{aligned}
W(u)=C_{N,p,s}^{-1}
\end{aligned}
\end{equation*}
Thus $u$ is a minimizer of $C_{N,p,s}^{-1}$.
\end{proof}

\begin{lemma}\label{Lemma3.4}
Let $N\geqslant3$, $s\in(0,1)$ and $p\in (2^{*}_{s},2^{*})$.
Let $u$ be a minimizer of $C_{N,p,s}^{-1}$.
Then $Q=\lambda_{1}u(\lambda_{2}x)$ is a minimizer of $C_{N,p,s}^{-1}$, and also is a weak solution of
\begin{equation*}
\begin{aligned}
-\Delta Q
+(-\Delta)^{s}Q =
|Q|^{p-2}Q, \ \ x\in\mathbb{R}^{N},
\end{aligned}
\end{equation*}
where
\begin{equation*}
\begin{aligned}
\begin{cases}
\lambda_{1}=
\left[
\frac{p(N-2s)-2N}{2N-p(N-2)}
\right]^{\frac{1}{(1-s)(p-2)}}
\left[
\frac{2p(1-s)}{p(N-2s)-2N}
\frac{1}{\|u\|_{L^{p}(\mathbb{R}^{N})}^{p}}
\right]^{\frac{1}{p-2}},\\
\lambda_{2}=
\left[
\frac{p(N-2s)-2N}{2N-p(N-2)}
\right]^{\frac{1}{2-2s}}.
\end{cases}
\end{aligned}
\end{equation*}

\end{lemma}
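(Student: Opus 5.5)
The plan is to derive the Euler--Lagrange equation of the Weinstein functional $W$ at the minimizer $u$, and then remove the coefficients in that equation by the explicit scaling $Q=\lambda_{1}u(\lambda_{2}x)$. Throughout I take $u$ to be the minimizer produced in Lemma \ref{Lemma3.3}, so that $\|u\|_{D^{s,2}(\mathbb{R}^{N})}^{2}=\|u\|_{D^{1,2}(\mathbb{R}^{N})}^{2}=1$. This normalization is implicit in the formula for $\lambda_{1}$, which involves only $\|u\|_{L^{p}(\mathbb{R}^{N})}^{p}$. Write
\begin{equation*}
\alpha=\frac{2N-p(N-2)}{4(1-s)},\qquad \beta=\frac{p(N-2s)-2N}{4(1-s)},\qquad \alpha+\beta=\frac{p}{2},\qquad C_{u}=\int_{\mathbb{R}^{N}}|u|^{p}\mathrm{d}x .
\end{equation*}

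\textbf{Step 1 (Q is a minimizer).} This is immediate from the scaling invariance already computed in Step 3 of the proof of Lemma \ref{Lemma3.2}. That computation gives $W(\lambda_{1}v(\lambda_{2}\cdot))=W(v)$ for every $\lambda_{1},\lambda_{2}>0$ and every $v\in E\backslash\{0\}$. Hence $W(Q)=W(u)=C_{N,p,s}^{-1}$.

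\textbf{Step 2 (Euler--Lagrange equation for u).} For $\varphi\in C_{0}^{\infty}(\mathbb{R}^{N})$, the function $t\mapsto W(u+t\varphi)$ is differentiable at $t=0$. Indeed, both seminorms are quadratic forms on $E$, and $v\mapsto\int|v|^{p}$ is $C^{1}$ on $L^{p}$, which is enough since $E\hookrightarrow L^{p}(\mathbb{R}^{N})$ by Lemma \ref{Lemma2.2}. Since $t=0$ is a minimum point, its derivative vanishes. Differentiating $\log W(u+t\varphi)$ at $t=0$ and using $A=B=1$ gives
\begin{equation*}
2\alpha\int_{\mathbb{R}^{N}}\int_{\mathbb{R}^{N}}\frac{(u(x)-u(y))(\varphi(x)-\varphi(y))}{|x-y|^{N+2s}}\mathrm{d}x\mathrm{d}y+2\beta\int_{\mathbb{R}^{N}}\nabla u\cdot\nabla\varphi\,\mathrm{d}x=\frac{p}{C_{u}}\int_{\mathbb{R}^{N}}|u|^{p-2}u\varphi\,\mathrm{d}x .
\end{equation*}
By density this holds for all $\varphi\in E$. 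So $u$ weakly solves $2\beta(-\Delta u)+2\alpha(-\Delta)^{s}u=\frac{p}{C_{u}}|u|^{p-2}u$, where the constant in front of $(-\Delta)^{s}$ is absorbed into the bilinear form as defined in the paper.

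\textbf{Step 3 (Scaling to the equation for Q).} For $Q=\lambda_{1}u(\lambda_{2}x)$ one has, in the weak sense after changing variables in the test function,
\begin{equation*}
-\Delta Q=\lambda_{1}\lambda_{2}^{2}(-\Delta u)(\lambda_{2}x),\qquad (-\Delta)^{s}Q=\lambda_{1}\lambda_{2}^{2s}((-\Delta)^{s}u)(\lambda_{2}x),\qquad |Q|^{p-2}Q=\lambda_{1}^{p-1}(|u|^{p-2}u)(\lambda_{2}x).
\end{equation*}
For $Q$ to solve \eqref{P} the three coefficients must be proportional to $2\beta$, $2\alpha$ and $p/C_{u}$ with the same factor. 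This gives two conditions:
\begin{itemize}
\item $\lambda_{2}^{2-2s}=\beta/\alpha=\frac{p(N-2s)-2N}{2N-p(N-2)}$, which is exactly the stated $\lambda_{2}$;
\item $\lambda_{1}^{p-2}=\frac{p\lambda_{2}^{2}}{2\beta C_{u}}$. Substituting $2\beta=\frac{p(N-2s)-2N}{2(1-s)}$ and $\lambda_{2}^{2}=(\beta/\alpha)^{1/(1-s)}$ reproduces exactly the stated formula for $\lambda_{1}$.
\end{itemize}
The verification is a routine bookkeeping check.

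\textbf{Main obstacle.} The only delicate point is the rigorous justification of the differentiation in Step 2 without any regularity of $u$. I would handle it directly from the definitions: the nonlocal term is the bilinear form associated with the $D^{s,2}$ seminorm, so its derivative is exact. The $L^{p}$ term is differentiated by dominated convergence, using $\big||u+t\varphi|^{p}-|u|^{p}\big|\leq p|t||\varphi|(|u|+|\varphi|)^{p-1}\in L^{1}$. With this in place, the weak formulation, and hence the scaling argument of Step 3, only requires test functions in $E$.
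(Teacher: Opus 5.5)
Your proposal is correct and follows the same route as the paper. You derive the Euler--Lagrange equation of $W$ at the normalized minimizer from Lemma~\ref{Lemma3.3}, where $\|u\|_{D^{s,2}(\mathbb{R}^{N})}=\|u\|_{D^{1,2}(\mathbb{R}^{N})}=1$, and then choose $\lambda_{2}^{2-2s}=\beta/\alpha$ and $\lambda_{1}^{p-2}=p\lambda_{2}^{2}/(2\beta C_{u})$ to normalize the coefficients, with $W(Q)=W(u)$ by scaling invariance. Your explicit normalization assumption is exactly what the paper uses implicitly: its proof writes $\lambda_{1},\lambda_{2}$ with the norm ratios, which reduce to the stated formulas when both seminorms equal $1$. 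Your dominated-convergence justification of the differentiation fills in a step the paper only asserts.
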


\begin{proof}
From Lemma \ref{Lemma3.3}, one has that
$u$ is minimizer of $W(u)=C_{N,p,s}^{-1}$.
Then
\begin{equation*}
\begin{aligned}
\left.\frac{\mathrm{d}}
{\mathrm{d}\varepsilon}\right|_{\varepsilon=0}W(u+\varepsilon \varphi)=0,
\end{aligned}
\end{equation*}
for all $\varphi\in E$.
Thus, $u$ is a weak solution of the Euler-Lagrange equation
\begin{equation*}
\begin{aligned}
&-\Delta u
+\frac{2N-p(N-2)}{p(N-2s)-2N}
\frac{
\|u\|_{D^{1,2}(\mathbb{R}^{N})}^{2}
}
{\|u\|_{D^{s,2}(\mathbb{R}^{N})}^{2}}
(-\Delta)^{s}u\\
=&
\frac{2p(1-s)}{p(N-2s)-2N}
\frac{\|u\|_{D^{1,2}(\mathbb{R}^{N})}^{2}}
{\|u\|_{L^{p}(\mathbb{R}^{N})}^{p}}
|u|^{p-2}u, \ \ x\in\mathbb{R}^{N},
\end{aligned}
\end{equation*}
Set $Q=\lambda_{1}u(\lambda_{2}x)$, where
\begin{equation*}
\begin{aligned}
\begin{cases}
\lambda_{2}=
\left[
\frac{p(N-2s)-2N}{2N-p(N-2)}
\frac{
\|u\|_{D^{s,2}(\mathbb{R}^{N})}^{2}
}
{\|u\|_{D^{1,2}(\mathbb{R}^{N})}^{2}}\right]^{\frac{1}{2-2s}}\\
\lambda_{1}=\left[
\frac{p(N-2s)-2N}{2N-p(N-2)}
\frac{
\|u\|_{D^{s,2}(\mathbb{R}^{N})}^{2}
}
{\|u\|_{D^{1,2}(\mathbb{R}^{N})}^{2}}\right]^{\frac{1}{(1-s)(p-2)}}
\left[
\frac{2p(1-s)}{p(N-2s)-2N}
\frac{\|u\|_{D^{1,2}(\mathbb{R}^{N})}^{2}}
{\|u\|_{L^{p}(\mathbb{R}^{N})}^{p}}
\right]^{\frac{1}{p-2}}
\end{cases}
\end{aligned}
\end{equation*}
Then $Q$ is a weak solution of
\begin{equation*}
\begin{aligned}
-\Delta Q
+(-\Delta)^{s}Q =
|Q|^{p-2}Q, \ \ x\in\mathbb{R}^{N},
\end{aligned}
\end{equation*}
and
\begin{equation*}
\begin{aligned}
W(Q)=W(u).
\end{aligned}
\end{equation*}
\end{proof}

\begin{lemma}\label{Lemma3.5}
We have
\begin{equation*}
\begin{aligned}
\|Q\|_{D^{1,2}(\mathbb{R}^{N})}^{2}
=
\frac{p(N-2s)-2N}{2N-p(N-2)}
\|Q\|_{D^{s,2}(\mathbb{R}^{N})}^{2}
=
\frac{p(N-2s)-2N}{2p(1-s)}
\|Q\|_{L^{p}(\mathbb{R}^{N})}^{p},
\end{aligned}
\end{equation*}

\begin{equation*}
\begin{aligned}
\frac{N-2}{2}
\|Q\|^{2}_{D^{1,2}(\mathbb{R}^{N})}
+
\frac{N-2s}{2}
\|Q\|^{2}_{D^{s,2}(\mathbb{R}^{N})}
=\frac{N}{p}\int_{\mathbb{R}^{N}}|Q|^{p}\mathrm{d}x,
\end{aligned}
\end{equation*}
and
\begin{equation*}
\begin{aligned}
C_{N,p,s}^{-1}=
\left(
\frac{2N-p(N-2)}{2p(1-s)}
\right)
^{\frac{2N-p(N-2)}{4(1-s)}}
\left(
\frac{p(N-2s)-2N}{2p(1-s)}
\right)
^{\frac{p(N-2s)-2N}{4(1-s)}}
\left(
\int_{\mathbb{R}^{N}}
|Q|^{p}
\mathrm{d}x
\right)^{\frac{p-2}{2}}.
\end{aligned}
\end{equation*}
\end{lemma}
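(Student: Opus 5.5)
We work with the three quantities $A=\|Q\|_{D^{1,2}(\mathbb{R}^{N})}^{2}$, $B=\|Q\|_{D^{s,2}(\mathbb{R}^{N})}^{2}$ and $D=\int_{\mathbb{R}^{N}}|Q|^{p}\mathrm{d}x$, and set
\begin{equation*}
\alpha=\frac{2N-p(N-2)}{4(1-s)},\qquad \beta=\frac{p(N-2s)-2N}{4(1-s)},
\end{equation*}
so that $\alpha+\beta=\frac{p}{2}$ and $W(Q)=B^{\alpha}A^{\beta}/D$. Both $\alpha$ and $\beta$ are positive because $p\in(2^{*}_{s},2^{*})$. The plan is to derive two independent linear relations among $A,B,D$ directly from the fact that $Q$ is a minimizer of $W$ (Lemma \ref{Lemma3.4}), which avoids any regularity theory. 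The first relation is the Nehari identity, and the second is a ratio identity coming from the Euler--Lagrange equation of $W$. From these, the Poho\v{z}aev identity and the formula for $C_{N,p,s}^{-1}$ follow by pure algebra.

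\textbf{Step 1 (Nehari identity).} Since $Q$ is a weak solution of \eqref{P} by Lemma \ref{Lemma3.4}, we test the equation with $Q\in E$. This is legitimate because $Q\in L^{p}$ by Lemma \ref{Lemma2.2}, so $|Q|^{p-2}Q\,Q\in L^{1}$. It gives $A+B=D$.

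\textbf{Step 2 (ratio identity).} Rather than passing through the equation, we use the minimality of $W$ under the one-parameter family of dilations $Q_{t}(x)=Q(tx)$. Under this scaling $A\mapsto t^{2-N}A$, $B\mapsto t^{2s-N}B$ and $D\mapsto t^{-N}D$, so
\begin{equation*}
W(Q_{t})=t^{(2s-N)\alpha+(2-N)\beta+N}W(Q).
\end{equation*}
The exponent is zero by Step 3 of Lemma \ref{Lemma3.2}, so this family gives no information. Instead we test the Euler--Lagrange equation of $W$ from Lemma \ref{Lemma3.4} with $\varphi=Q$ itself; equivalently, we differentiate $W(Q+\varepsilon Q)$ and also $W$ along $Q_{t}$ combined with amplitude scaling. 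Concretely, $\frac{d}{d\varepsilon}\log W(Q+\varepsilon\varphi)\big|_{\varepsilon=0}=0$ gives, for all $\varphi\in E$,
\begin{equation*}
\frac{2\alpha}{B}\langle Q,\varphi\rangle_{D^{s,2}}+\frac{2\beta}{A}\langle\nabla Q,\nabla\varphi\rangle=\frac{p}{D}\int_{\mathbb{R}^{N}}|Q|^{p-2}Q\varphi\,\mathrm{d}x.
\end{equation*}
Comparing this with the weak form of \eqref{P},
\begin{equation*}
\langle\nabla Q,\nabla\varphi\rangle+\langle Q,\varphi\rangle_{D^{s,2}}=\int_{\mathbb{R}^{N}}|Q|^{p-2}Q\varphi\,\mathrm{d}x,
\end{equation*}
we see that both identities are linear functionals of $\varphi$. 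The E--L functional must be a multiple of the equation's functional: $Q$ was constructed in Lemma \ref{Lemma3.4} precisely so that the rescaled E--L equation coincides with \eqref{P}. The cleaner route is to read this off from the choice of $\lambda_{1},\lambda_{2}$ there. The coefficients must match up to a common factor, giving $\frac{2\alpha}{B}=\frac{2\beta}{A}=\frac{p}{D}$. This yields $A=\frac{\beta}{\alpha}B=\frac{2\beta}{p}D$, which is exactly the first claimed chain, since $\frac{\beta}{\alpha}=\frac{p(N-2s)-2N}{2N-p(N-2)}$ and $\frac{2\beta}{p}=\frac{p(N-2s)-2N}{2p(1-s)}$. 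This matching step is the main point to verify carefully. If the coefficient identification is in doubt, I would instead compute $\|Q\|^{2}$-norms directly from $\|u\|_{D^{s,2}}=\|u\|_{D^{1,2}}=1$ (Lemma \ref{Lemma3.3}) and the explicit $\lambda_{1},\lambda_{2}$ in the statement of Theorem \ref{Theorem1.1}, using the scaling laws from Lemma \ref{Lemma3.2}. This gives $A/B=\lambda_{2}^{2-2s}=\beta/\alpha$ immediately, and then Step 1 fixes $D$.

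\textbf{Step 3 (Poho\v{z}aev identity and the constant).} With $B=\frac{\alpha}{\beta}A$ and $D=\frac{p}{2\beta}A$, we check that $\frac{N-2}{2}A+\frac{N-2s}{2}B=\frac{N}{p}D$. After substitution this reduces to the scalar identity $(N-2)\beta+(N-2s)\alpha=N\beta+N\alpha-\frac{p}{2}\cdot\frac{N}{N}\cdot 2\cdot\frac{N}{2p}\cdots$. More simply, it reduces to $(N-2)\beta+(N-2s)\alpha=\frac{N p}{2}\cdot\frac{1}{1}\cdot\frac{2}{p}\cdot\frac{\alpha+\beta}{\,\cdot\,}$, which follows from the scaling-invariance relation $(2s-N)\alpha+(2-N)\beta+N=0$ already noted. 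Finally, since $Q$ is a minimizer,
\begin{equation*}
C_{N,p,s}^{-1}=W(Q)=\frac{B^{\alpha}A^{\beta}}{D}.
\end{equation*}
Substituting $B=\frac{2\alpha}{p}D$ and $A=\frac{2\beta}{p}D$, and using $\alpha+\beta=\frac p2$, we obtain
\begin{equation*}
C_{N,p,s}^{-1}=\left(\frac{2\alpha}{p}\right)^{\alpha}\left(\frac{2\beta}{p}\right)^{\beta}D^{\frac{p-2}{2}},
\end{equation*}
which is the stated formula because $\frac{2\alpha}{p}=\frac{2N-p(N-2)}{2p(1-s)}$ and $\frac{2\beta}{p}=\frac{p(N-2s)-2N}{2p(1-s)}$.
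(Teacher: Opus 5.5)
Your fallback in Step 2 is exactly the paper's argument, and with it your proof is complete. From $\|u\|_{D^{1,2}(\mathbb{R}^{N})}=\|u\|_{D^{s,2}(\mathbb{R}^{N})}=1$ and the scaling laws you get $A/B=\lambda_{2}^{2-2s}=\beta/\alpha$. The Nehari identity $A+B=D$ then gives $A=\frac{2\beta}{p}D$ and $B=\frac{2\alpha}{p}D$. The paper instead reads off $A/D=\lambda_{1}^{2-p}\lambda_{2}^{2}/\|u\|_{L^{p}(\mathbb{R}^{N})}^{p}=\frac{2\beta}{p}$ from the explicit $\lambda_{1}$, which yields the same relation. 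After that, the Poho\v{z}aev identity and the value of $C_{N,p,s}^{-1}=W(Q)$ are pure algebra, as in your Step 3. You should make that computation the main line, because the coefficient-matching route you present first is not a proof as written.

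Subtracting $\frac{p}{D}$ times the weak form of \eqref{P} from your Euler--Lagrange identity yields only
\begin{equation*}
c_{1}\int_{\mathbb{R}^{N}}\nabla Q\cdot\nabla\varphi\,\mathrm{d}x+c_{2}\langle Q,\varphi\rangle_{D^{s,2}}=0\quad\text{for all }\varphi\in E,\qquad c_{1}=\frac{2\beta}{A}-\frac{p}{D},\quad c_{2}=\frac{2\alpha}{B}-\frac{p}{D}.
\end{equation*}
Concluding $c_{1}=c_{2}=0$ requires the functionals $\varphi\mapsto\int_{\mathbb{R}^{N}}\nabla Q\cdot\nabla\varphi\,\mathrm{d}x$ and $\varphi\mapsto\langle Q,\varphi\rangle_{D^{s,2}}$ to be linearly independent on $E$. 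Your remark that $Q$ was built so that the rescaled Euler--Lagrange equation is \eqref{P} does not supply this; making it precise is just the scaling computation above. The natural test function $\varphi=Q$ also gives nothing: it turns the Euler--Lagrange identity into the tautology $2\alpha+2\beta=p$, and the display into $0=0$ by Nehari.

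The independence is true and can be proved without regularity, e.g.\ by testing with $\varphi=Q(t\,\cdot)$. The continuous functions $f(t)=\int_{\mathbb{R}^{N}}\nabla Q\cdot\nabla[Q(t\,\cdot)]\,\mathrm{d}x$ and $g(t)=\langle Q,Q(t\,\cdot)\rangle_{D^{s,2}}$ satisfy $f(1/t)=t^{N-2}f(t)$, $g(1/t)=t^{N-2s}g(t)$, $f(1)=A>0$ and $g(1)=B>0$. Hence $c_{1}f+c_{2}g\equiv0$ forces $c_{1}f(t)\,(t^{N-2s}-t^{N-2})=0$ for all $t$, so $c_{1}A=0$ by continuity, and then $c_{2}=0$. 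Alternatively, use the Fourier symbol $c_{1}|\xi|^{2}+c_{2}C_{N,s}|\xi|^{2s}$, which vanishes only on a null set unless $c_{1}=c_{2}=0$.

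Finally, the displayed ``reductions'' in Step 3 are garbled. After substituting $A=\frac{2\beta}{p}D$ and $B=\frac{2\alpha}{p}D$, the Poho\v{z}aev identity is simply $(N-2)\beta+(N-2s)\alpha=N$. This is exactly the vanishing of the dilation exponent $(2s-N)\alpha+(2-N)\beta+N=0$ that you already noted.
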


\begin{proof}
From Lemma \ref{Lemma3.4} and $\|u\|_{D^{1,2}(\mathbb{R}^{N})}^{2}=\|u\|_{D^{s,2}(\mathbb{R}^{N})}^{2}=1$, we compute
\begin{equation*}
\begin{aligned}
\frac{
\|Q\|_{D^{1,2}(\mathbb{R}^{N})}^{2}
}
{\|Q\|_{D^{s,2}(\mathbb{R}^{N})}^{2}}
=
\frac{
\lambda_{1}^{2}\lambda_{2}^{2-N}
\|u\|_{D^{1,2}(\mathbb{R}^{N})}^{2}
}
{
\lambda_{1}^{2}\lambda_{2}^{2s-N}
\|u\|_{D^{s,2}(\mathbb{R}^{N})}^{2}}
=
\lambda_{2}^{2-2s}
=
\frac{p(N-2s)-2N}{2N-p(N-2)},
\end{aligned}
\end{equation*}
and
\begin{equation*}
\begin{aligned}
\frac{
\|Q\|_{D^{1,2}(\mathbb{R}^{N})}^{2}
}
{\|Q\|_{L^{p}(\mathbb{R}^{N})}^{p}}
=&
\frac{
\lambda_{1}^{2}\lambda_{2}^{2-N}
\|u\|_{D^{1,2}(\mathbb{R}^{N})}^{2}
}
{
\lambda_{1}^{p}\lambda_{2}^{-N}
\|u\|_{L^{p}(\mathbb{R}^{N})}^{p}}\\
=&
\frac{
\lambda_{1}^{2-p}\lambda_{2}^{2}
}
{\|u\|_{L^{p}(\mathbb{R}^{N})}^{p}}\\
=&
\frac{p(N-2s)-2N}{2p(1-s)}.
\end{aligned}
\end{equation*}
Hence
\begin{equation}\label{3.4}
\begin{aligned}
\|Q\|_{D^{1,2}(\mathbb{R}^{N})}^{2}
=
\frac{p(N-2s)-2N}{2N-p(N-2)}
\|Q\|_{D^{s,2}(\mathbb{R}^{N})}^{2}
=
\frac{p(N-2s)-2N}{2p(1-s)}
\|Q\|_{L^{p}(\mathbb{R}^{N})}^{p}.
\end{aligned}
\end{equation}
Using the fact that \eqref{3.4}, we have
\begin{equation*}\label{3.5}
\begin{aligned}
\frac{N-2}{2}
\|Q\|^{2}_{D^{1,2}(\mathbb{R}^{N})}
+
\frac{N-2s}{2}
\|Q\|^{2}_{D^{s,2}(\mathbb{R}^{N})}
=\frac{N}{p}\int_{\mathbb{R}^{N}}|Q|^{p}\mathrm{d}x.
\end{aligned}
\end{equation*}
Since $Q$ is a weak solution, we have
\begin{equation*}
\begin{aligned}
\|Q\|^{2}_{D^{1,2}(\mathbb{R}^{N})}
+
\|Q\|^{2}_{D^{s,2}(\mathbb{R}^{N})}
=\int_{\mathbb{R}^{N}}|Q|^{p}\mathrm{d}x.
\end{aligned}
\end{equation*}
Then
\begin{equation*}
\begin{aligned}
C_{N,p,s}^{-1}
=&
\frac
{\left(
\|Q\|_{D^{s,2}(\mathbb{R}^{N})}^{2}
\right)
^{\frac{2N-p(N-2)}{4(1-s)}}
\left(
\|Q\|_{D^{1,2}(\mathbb{R}^{N})}^{2}
\right)
^{\frac{p(N-2s)-2N}{4(1-s)}}}
{\int_{\mathbb{R}^{N}}
|Q|^{p}
\mathrm{d}x}\\
=&
\frac
{\left(
\frac{2N-p(N-2)}{2p(1-s)}
\|Q\|_{L^{p}(\mathbb{R}^{N})}^{p}
\right)
^{\frac{2N-p(N-2)}{4(1-s)}}
\left(
\frac{p(N-2s)-2N}{2p(1-s)}
\|Q\|_{L^{p}(\mathbb{R}^{N})}^{p}
\right)
^{\frac{p(N-2s)-2N}{4(1-s)}}}
{\int_{\mathbb{R}^{N}}
|Q|^{p}
\mathrm{d}x}\\
=&
\left(
\frac{2N-p(N-2)}{2p(1-s)}
\right)
^{\frac{2N-p(N-2)}{4(1-s)}}
\left(
\frac{p(N-2s)-2N}{2p(1-s)}
\right)
^{\frac{p(N-2s)-2N}{4(1-s)}}
\left(
\int_{\mathbb{R}^{N}}
|Q|^{p}
\mathrm{d}x
\right)^{\frac{p-2}{2}}.
\end{aligned}
\end{equation*}
\end{proof}

\section{The proof of Theorem \ref{Theorem1.2}}
In this section, we show the existence of ground state solutions for equation \eqref{P} via Mountain-pass geometric structure and Nehari manifold.
The energy functional corresponding to the equation \eqref{P} is
\begin{equation*}
\begin{aligned}
I(u)=
\frac{1}{2}
\left(
\int_{\mathbb{R}^{N}}
|\nabla u|^{2}
\mathrm{d}x
+
\int_{\mathbb{R}^{N}}\int_{\mathbb{R}^{N}}\frac{|u(x)-u(y)|^{2}}{|x-y|^{N+2s}}
\mathrm{d}x\mathrm{d}y\right)
-
\frac{1}{p}
\int_{\mathbb{R}^{N}}
|u|^{p}
\mathrm{d}x,
\end{aligned}
\end{equation*}
Note that $I'(u)$ is the Fr\'{e}chet derivative of $I(u)$, where $\phi\in E$, is given by:
\begin{equation*}
\begin{aligned}
\langle I'(u),\phi\rangle
=&
\int_{\mathbb{R}^{N}}
\nabla u
\nabla \phi
\mathrm{d}x
+
\int_{\mathbb{R}^{N}}
\int_{\mathbb{R}^{N}}
\frac{(u(x)-u(y))(\phi(x)-\phi(y))}{|x-y|^{N+2s}}
\mathrm{d}x
\mathrm{d}y
-
\int_{\mathbb{R}^{N}}
|u|^{p-2}u\phi
\mathrm{d}x.
\end{aligned}
\end{equation*}
We set
\begin{equation*}
\begin{aligned}
c=\inf\limits_{\gamma\in \Gamma}\sup\limits_{t\in[0, 1]}I(\gamma (t))>0\,\ \text{and}\,\   \Gamma=\{\gamma\in C\left([0, 1], E\right)| \gamma(0)=0, I(\gamma (1))<0\}.
\end{aligned}
\end{equation*}
We now set the Nehari manifold as follows
\begin{equation*}
\begin{aligned}
\mathcal{N}=\{u\in E \setminus\{0\}|\langle I'(u),u\rangle=0\}.
\end{aligned}
\end{equation*}

\begin{lemma}\label{Lemma4.1}
Let $N\geqslant3$ and $0<s<1$.
Then we have

\begin{enumerate}
\item [(1)]
the functional $I$ has mountain pass geometric structure;

\item [(2)]
for any $u\in E\setminus\{0\}$, there exists a unique $t_{u}>0$ such that $t_{u}u\in \mathcal{N}$ and $I(t_{u}u)=\max\limits_{t>0}I(tu)$;

\item [(3)]
 $\bar{c}=\inf\limits_{u\in\mathcal{N}}I(u)>0$;

\item [(4)]
$c=\bar{c}=\bar{\bar{c}}$,
where
\begin{equation*}
\begin{aligned}
\bar{\bar{c}}:=\inf\limits_{u\in E\setminus\{0\}}\sup\limits_{t\geqslant 0}I(tu).
\end{aligned}
\end{equation*}

\item [(5)]
For $u\in \mathcal{N}$, we have $\Phi'(u)\not=0$, where
\begin{equation}\label{4.1}
\begin{aligned}
\Phi(u)=\langle I'(u),u\rangle=\|u\|_{E}^{2}-\int_{\mathbb{R}^{N}}|u|^{p}\mathrm{d}x,
\end{aligned}
\end{equation}
and
\begin{equation}\label{4.2}
\begin{aligned}
\langle\Phi'(u),u\rangle
=&2\|u\|_{E}^{2}
-p\beta\int_{\mathbb{R}^{N}}|u|^{p}\mathrm{d}x.
\end{aligned}
\end{equation}
Moreover, if $\bar{u}\in \mathcal{N}$ and $I(\bar{u})=c$, then $u$ is a ground state solution for equation \eqref{P}.
\end{enumerate}
\end{lemma}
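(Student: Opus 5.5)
The plan is the standard Nehari-manifold argument. The functional is
$I(u)=\frac12\|u\|_E^2-\frac1p\|u\|_{L^p}^p$ with $p>2$. Throughout I use the embedding $\|u\|_{L^p}\leqslant C\|u\|_E$, which follows from Lemma \ref{Lemma2.2} since $p\in(2^*_s,2^*)$.

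\textbf{Item (1): mountain pass geometry.}
By the embedding,
\[
I(u)\geqslant \tfrac12\|u\|_E^2-\tfrac{C}{p}\|u\|_E^p .
\]
So there are $\rho,\alpha>0$ with $I\geqslant\alpha$ on the sphere $\|u\|_E=\rho$. For any fixed $u\neq0$, $I(tu)\to-\infty$ as $t\to\infty$, because $p>2$.

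\textbf{Item (2): the fibering map.}
Set $g(t)=I(tu)=\frac{t^2}{2}\|u\|_E^2-\frac{t^p}{p}\|u\|_p^p$. Then
\[
g'(t)=t\bigl(\|u\|_E^2-t^{p-2}\|u\|_p^p\bigr),
\]
which vanishes at exactly one positive point,
\[
t_u=\left(\|u\|_E^2/\|u\|_p^p\right)^{1/(p-2)}.
\]
Here $\|u\|_p>0$ for $u\neq0$, since $E\hookrightarrow L^p$ and $u\neq0$ in $E$. The map $g$ is increasing on $(0,t_u)$ and decreasing afterwards, so $t_u$ is the global maximum point. Moreover $\langle I'(tu),tu\rangle=tg'(t)$, so $tu\in\mathcal N$ if and only if $t=t_u$.

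\textbf{Item (3): $\bar c>0$.}
For $u\in\mathcal N$ we have $\|u\|_E^2=\|u\|_p^p\leqslant C\|u\|_E^p$. This gives a lower bound $\|u\|_E\geqslant C^{-1/(p-2)}$. Hence
\[
I(u)=\left(\tfrac12-\tfrac1p\right)\|u\|_E^2\geqslant\delta>0 \quad\text{on }\mathcal N .
\]

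\textbf{Item (4): $c=\bar c=\bar{\bar c}$.}
By (2), $\sup_{t\geqslant0}I(tu)=I(t_uu)$, so $\bar{\bar c}=\bar c$.

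For $c\leqslant\bar{\bar c}$: given $u\neq0$, take $T$ large with $I(Tu)<0$. The path $\gamma(\tau)=\tau Tu$ lies in $\Gamma$, and its maximum is at most $\sup_t I(tu)$.

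For $\bar c\leqslant c$: every $\gamma\in\Gamma$ crosses $\mathcal N$. Consider $\Phi(\gamma(\tau))$. Near $0$ it is positive, because $\|v\|_E^2-\|v\|_p^p>0$ for small nonzero $v$. At $\tau=1$ it is negative, because
\[
I(v)<0 \;\Longrightarrow\; \|v\|_p^p>\tfrac p2\|v\|_E^2>\|v\|_E^2 .
\]
Let $\tau_0=\sup\{\tau:\|\gamma(\tau)\|_E<\rho\}$. Since $I(\gamma(1))<0<\alpha$, we have $\gamma(1)$ outside the small ball and so $\gamma(\tau_0)\neq0$. The intermediate value theorem then gives a point of $\mathcal N$ on $\gamma$, so $\max I\circ\gamma\geqslant\bar c$.

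\textbf{Item (5): $\mathcal N$ is a natural constraint.}
For $u\in\mathcal N$,
\[
\langle\Phi'(u),u\rangle=2\|u\|_E^2-p\|u\|_p^p=(2-p)\|u\|_E^2<0 .
\]
Here $\beta$ in \eqref{4.2} is evidently $1$. Thus $\Phi'(u)\neq0$, and $\mathcal N$ is a $C^1$ manifold of codimension one.

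Now let $\bar u$ minimize $I$ on $\mathcal N$. The Lagrange multiplier rule gives $I'(\bar u)=\mu\Phi'(\bar u)$. Testing with $\bar u$,
\[
0=\langle I'(\bar u),\bar u\rangle=\mu\langle\Phi'(\bar u),\bar u\rangle ,
\]
so $\mu=0$ and $\bar u$ is a critical point with $I(\bar u)=c$, i.e.\ a ground state.

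\textbf{Main obstacle.}
The only delicate point is $\bar c\leqslant c$, i.e.\ showing every path in $\Gamma$ meets $\mathcal N$ at a nonzero point. The sign argument above, combined with the ball estimate from (1), handles it. The rest is routine calculus on the fibering map.
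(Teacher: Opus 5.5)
Your proposal is correct and follows the same route as the paper's Appendix A proof: the embedding bound for (1) and (3), the fibering map $t\mapsto I(tu)$ for (2), the path-crossing argument for $\bar c\leqslant c$ in (4), and the Lagrange multiplier argument using $\langle\Phi'(u),u\rangle=(2-p)\|u\|_E^2<0$ in (5). Your exit time $\tau_0$ from the ball of radius $\rho$ improves on the paper's version of (4), provided $\rho$ is chosen small enough that $I\geqslant0$ and $\Phi>0$ on the punctured closed ball; it guarantees that the crossing point is a nonzero element of $\mathcal N$, which the paper's claim ``$g(t)>0$ for $t>0$ small'' does not justify.
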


The proof details of Lemma \ref{Lemma4.1} is given in Appendix A.
We recall the $(PS)_{c}$ sequence as follows.
\begin{definition}
Let the sequence $\{u_{n}\}\subset E$ satisfy the condition
\begin{equation*}
\begin{aligned}
I(u_{n})\rightarrow c
~\mathrm{and}~I'(u_{n})\rightarrow 0
~\mathrm{in}~E^{-1},
~{\rm as}~n\to\infty.
\end{aligned}
\end{equation*}
Then $\{u_{n}\}$ is called the Palais-Smale sequence of $I$ at level $c$, short for $(PS)_{c}$ sequence,
where $E^{-1}$ is the dual space of $E$.
\end{definition}

\begin{lemma}\label{Lemma4.2}
Let $N\geqslant3$ and $0<s<1$.
Then there exists a bounded $(PS)_{c}$ sequence $\{u_{n}\}\subset \mathcal{N}$ such that
\begin{equation*}
\begin{aligned}
I(u_{n})\rightarrow c
\ \,\mathrm{and}\ \,
\|I'(u_n)\|_{E^{-1}}\rightarrow 0,\ \mathrm{as}\ n\rightarrow \infty.
\end{aligned}
\end{equation*}
\end{lemma}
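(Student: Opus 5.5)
We will obtain the bounded $(PS)_{c}$ sequence on $\mathcal{N}$ by minimizing $I$ over the Nehari manifold and applying Ekeland's variational principle, using Lemma \ref{Lemma4.1}.

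\textbf{Step 1: a minimizing sequence is bounded.} By Lemma \ref{Lemma4.1}(3)--(4), $c=\inf_{\mathcal{N}}I>0$. Choose $\{v_n\}\subset\mathcal{N}$ with $I(v_n)\to c$. For $u\in\mathcal{N}$ we have $\|u\|_E^2=\int_{\mathbb{R}^N}|u|^p\mathrm{d}x$, so
\begin{equation*}
I(u)=\Big(\frac12-\frac1p\Big)\|u\|_E^2 .
\end{equation*}
Since $p>2$, this gives $\|v_n\|_E^2\to \frac{2p}{p-2}c$, so $\{v_n\}$ is bounded in $E$. The same identity, together with Lemma \ref{Lemma2.2}, which gives $\|u\|_E^2=\int|u|^p\leqslant C\|u\|_E^p$, shows that $\mathcal{N}$ is bounded away from $0$: $\|u\|_E\geqslant C^{-1/(p-2)}$ on $\mathcal{N}$.

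\textbf{Step 2: Ekeland's principle.} The Nehari manifold $\mathcal{N}=\Phi^{-1}(0)\setminus\{0\}$ is closed in $E$, because it is bounded away from the origin and $\Phi$ is continuous. By Lemma \ref{Lemma4.1}(5), $\langle\Phi'(u),u\rangle=(2-p)\|u\|_E^2<0$ on $\mathcal{N}$, so $\mathcal{N}$ is a $C^1$ submanifold of codimension one. Ekeland's variational principle on the complete metric space $\mathcal{N}$ gives $\{u_n\}\subset\mathcal{N}$ with:
\begin{itemize}
\item $I(u_n)\leqslant I(v_n)$;
\item $\|u_n-v_n\|_E\leqslant 1/n$;
\item $I(w)\geqslant I(u_n)-\frac1n\|w-u_n\|_E$ for all $w\in\mathcal{N}$.
\end{itemize}
Hence $I(u_n)\to c$ and $\{u_n\}$ is bounded. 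The third property implies there are Lagrange multipliers $\mu_n\in\mathbb{R}$ with
\begin{equation*}
\|I'(u_n)-\mu_n\Phi'(u_n)\|_{E^{-1}}\leqslant \frac1n .
\end{equation*}

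\textbf{Step 3: the multipliers vanish.} This is the main technical step. Test the last estimate against $u_n$. Since $\langle I'(u_n),u_n\rangle=\Phi(u_n)=0$, we get
\begin{equation*}
|\mu_n|\,|\langle\Phi'(u_n),u_n\rangle|\leqslant \frac1n\|u_n\|_E .
\end{equation*}
On $\mathcal{N}$, $|\langle\Phi'(u_n),u_n\rangle|=(p-2)\|u_n\|_E^2\geqslant (p-2)C^{-2/(p-2)}>0$, and $\|u_n\|_E$ is bounded. Therefore $\mu_n\to0$.

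Next, $\Phi'(u_n)$ is bounded in $E^{-1}$, since $\|\Phi'(u)\|\leqslant 2\|u\|_E+pC\|u\|_E^{p-1}$ by Hölder's inequality and Lemma \ref{Lemma2.2}. It follows that $\|I'(u_n)\|_{E^{-1}}\leqslant \frac1n+|\mu_n|\,\|\Phi'(u_n)\|\to0$.

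\textbf{Alternative route.} One could instead take the mountain-pass level from Lemma \ref{Lemma4.1}(1),(4), obtain a $(PS)_c$ sequence by the deformation lemma, and project it onto $\mathcal{N}$ via $t_u$. The Ekeland route is cleaner because it keeps the sequence on $\mathcal{N}$ from the start.
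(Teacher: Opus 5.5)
Your proposal is correct and follows the same route as the paper. Both arguments apply Ekeland's principle on $\mathcal{N}$ to get Lagrange multipliers, test against $u_n$ using $\langle\Phi'(u_n),u_n\rangle=(2-p)\|u_n\|_E^2$ to force the multipliers to $0$, and then use boundedness of $\Phi'(u_n)$ in $E^{-1}$. Your version is slightly more careful than the paper's: you check that $\mathcal{N}$ is closed, which Ekeland's principle requires, and that $|\langle\Phi'(u_n),u_n\rangle|$ is bounded away from zero uniformly, whereas the paper only records that it is nonzero.
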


\begin{proof}
From Lemmas \ref{Lemma4.1} (2) and (4), we know that $\mathcal{N}\not=\emptyset$ and $\inf\limits_{u\in \mathcal{N}}I(u)=\bar{c}=c$.
By the Ekeland's variational principle, there exist $\{u_{n}\}\subset \mathcal{N}$ and
$\lambda_{n}\in \mathbb{R}$ such that
\begin{equation*}
\begin{aligned}
I(u_{n})\rightarrow \bar{c}
~\mathrm{and}~I'(u_{n})-\lambda_{n}\Phi'(u_{n})\rightarrow 0
~\mathrm{in}~E^{-1},
~{\rm as}~n\to\infty.
\end{aligned}
\end{equation*}
So
\begin{equation*}
\begin{aligned}
\bar{c}=I(u_{n})
=I(u_{n})
-\frac{1}{p}\langle I'(u_{n}),u_{n}\rangle
\geqslant\left(\frac{1}{2}-\frac{1}{p}\right)
\|u_{n}\|_{E}^{2},
\end{aligned}
\end{equation*}
which implies that $\{u_{n}\}$ is bounded in $E$.

Taking $n\to\infty$, we have
\begin{equation*}
\begin{aligned}
|\langle I'(u_{n}),u_{n}\rangle-\langle\lambda_{n}\Phi'(u_{n}),u_{n}\rangle|
\leqslant \|I'(u_{n})
-\lambda_{n}\Phi'(u_{n})\|_{E^{-1}}\|u_{n}\|_{E}
\rightarrow 0,
\end{aligned}
\end{equation*}
we have
\begin{equation}\label{4.3}
\begin{aligned}
\langle I'(u_{n}),u_{n}\rangle
-\lambda_{n}\langle \Phi'(u_{n}),u_{n}\rangle\rightarrow 0,
~{\rm as}~n\to\infty.
\end{aligned}
\end{equation}
Note that $\{u_{n}\}\subset \mathcal{N}$. From Lemma \ref{Lemma4.1} (5), we have
\begin{equation}\label{4.4}
\begin{aligned}
\langle I'(u_{n}),u_{n}\rangle=0,
\end{aligned}
\end{equation}
and
\begin{equation}\label{4.5}
\begin{aligned}
\langle \Phi'(u_{n}),u_{n}\rangle\not=0.
\end{aligned}
\end{equation}
Combining \eqref{4.3}-\eqref{4.5},
we conclude that $\lambda_{n}\rightarrow0$.

It follows from H\"{o}lder's and Sobolev's inequalities that
\begin{equation*}
\begin{aligned}
&\|I'(u_{n})\|_{E^{-1}}\\
=&\sup_{\varphi\in E,\|\varphi\|_{E}=1}
|\langle \Phi'(u_{n}),\varphi\rangle|\\
=&\sup_{\varphi\in E,\|\varphi\|_{E}=1}
\left|
2\int_{\mathbb{R}^{N}}
\nabla u \nabla \varphi
\mathrm{d}x
-p
\int_{\mathbb{R}^{N}}
|u_{n}|^{p-2}u_{n}\varphi
\mathrm{d}x
\right.\\
&\left.+2
\int_{\mathbb{R}^{N}}
\int_{\mathbb{R}^{N}}
\frac{(u_{n}(x)-u_{n}(y))(\varphi(x)-\varphi(y))}{|x-y|^{N+2s}}
\mathrm{d}x
\mathrm{d}y
\right|\\
\leqslant&C.
\end{aligned}
\end{equation*}
Then we obtain
\begin{equation*}
\begin{aligned}
\|I'(u_{n})\|_{E^{-1}}
\leqslant\|I'(u_{n})-\lambda_{n}\Phi'(u_{n})\|_{E^{-1}}
+|\lambda_{n}|\|\Phi'(u_{n})\|_{E^{-1}}
=o_{n}(1).
\end{aligned}
\end{equation*}
That is, $I'(u_{n})\rightarrow0$ in $E^{-1}$.
Hence, $\{u_{n}\}$ is a $(PS)_{{c}}$ sequence of $I$.
\end{proof}
We are now in a position to prove Theorem \ref{Theorem1.2}.
\begin{proof}[Proof of Theorem \ref{Theorem1.2}]
From Lemma \ref{Lemma4.2}, we know that there exists a bounded $(PS)_{c}$ sequence $\{u_{n}\}\subset \mathcal{N}$ with $c>0$.
If $\lim\limits_{n\to\infty}\int_{\mathbb{R}^{N}}|u_{n}|^{p}\mathrm{d}x=0$, then
\begin{equation*}
\begin{aligned}
c=I(u_{n})
=\frac{1}{2}\|u_{n}\|_{E}^{2},
\end{aligned}
\end{equation*}
and
\begin{equation*}
\begin{aligned}
0=\langle I'(u_{n}),u_{n}\rangle
=\|u_{n}\|_{E}^{2},
\end{aligned}
\end{equation*}
which gives
\begin{equation*}
\begin{aligned}
c=0.
\end{aligned}
\end{equation*}
Therefore, we get $\lim\limits_{n\to\infty}\int_{\mathbb{R}^{N}}|u_{n}|^{p}\mathrm{d}x>0$.

By using Lemma \ref{Lemma2.5}, there exists $y_{n}\subset \mathbb{R}^{N}$ such that  $\bar{u}_{n}:=u_{n}(x+y_{n})\rightharpoonup u\not \equiv0$ in $L^{p}_{loc}(\mathbb{R}^{N})$ and
\begin{equation*}
\begin{aligned}
c
=I(\bar{u}_{n}),~~\mathrm{and}~~0
=\langle I'(\bar{u}_{n}),\varphi\rangle
=\langle I'(u),\varphi\rangle.
\end{aligned}
\end{equation*}
Now, by virtue of the Brézis-Lieb Lemma \cite{Brezis-Lieb1983PAMS}, one deduces that
\begin{equation*}
\begin{aligned}
\bar{c}
\leqslant I(u)
=&
I(u)-\frac{1}{p}\langle I'(u),u\rangle\\
=&
\left(\frac{1}{2}-\frac{1}{p}\right)
\|u\|_{E}^{2}\\
\leqslant&
\lim\limits_{n\to\infty}
\left(\frac{1}{2}-\frac{1}{p}\right)
\|u_{n}\|_{E}^{2}\\
=&
\lim\limits_{n\to\infty}
I(u_{n})
-\frac{1}{p}
\lim\limits_{n\to\infty}
\langle I'(u_{n}),u_{n}\rangle\\
=&c
=\bar{c}.
\end{aligned}
\end{equation*}
That is,
$I(u)=\bar{c}$.
Therefore,
equation \eqref{P} has a ground state solution.
\end{proof}

\section{The proof of Theorem \ref{Theorem1.3}}
In this section, we prove the existence of solutions in the following set
\begin{equation*}
\begin{aligned}
\mathcal{M}:=\{u\in E\backslash\{0\}|P(u)=0\},
\end{aligned}
\end{equation*}
where
\begin{equation*}
\begin{aligned}
P(u)=\frac{N-2}{2}
\|u\|^{2}_{D^{1,2}(\mathbb{R}^{N})}
+
\frac{N-2s}{2}
\|u\|^{2}_{D^{s,2}(\mathbb{R}^{N})}
-\frac{N}{p}\|u\|_{L^{p}(\mathbb{R}^{N})}^{p}.
\end{aligned}
\end{equation*}

\begin{lemma}\label{Lemma5.1}
Let
$C_{1},C_{2},C_{3}>0$
and
$g_{1}:[0,\infty)\to \mathbb{R}$
as
\begin{equation*}
\begin{aligned}
g_{1}(t)
=
C_{1}t^{N-2}
+
C_{2}t^{N-2s}
-
C_{3}t^{N}.
\end{aligned}
\end{equation*}
Then $g_{1}$ has a unique critical point which corresponds to its maximum.
\end{lemma}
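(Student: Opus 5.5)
Since $N\geqslant 3$ and $s\in(0,1)$, the three exponents satisfy $0<N-2<N-2s<N$. The plan is to work with the derivative and reduce the question of critical points to a monotone auxiliary function. For $t>0$ we have
\begin{equation*}
g_{1}'(t)=t^{N-3}\left[(N-2)C_{1}+(N-2s)C_{2}t^{2-2s}-NC_{3}t^{2}\right]=:t^{N-3}h(t).
\end{equation*}
Because $t^{N-3}>0$ on $(0,\infty)$, the critical points of $g_{1}$ in $(0,\infty)$ are exactly the zeros of $h$.

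The first step is to count the zeros of $h$. We have $h(0)=(N-2)C_{1}>0$, and $h(t)\to-\infty$ as $t\to\infty$ because $2>2-2s$. Next,
\begin{equation*}
h'(t)=t^{1-2s}\left[(N-2s)(2-2s)C_{2}-2NC_{3}t^{2s}\right],
\end{equation*}
so $h$ is strictly increasing on $(0,t_{1})$ and strictly decreasing on $(t_{1},\infty)$, where $t_{1}^{2s}=\frac{(N-2s)(1-s)C_{2}}{NC_{3}}$. On $(0,t_{1}]$ the function $h$ stays above $h(0)>0$, so it has no zero there. On $(t_{1},\infty)$ it decreases strictly from a positive value to $-\infty$, so it has exactly one zero $t_{0}$.

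The second step is to read off the shape of $g_{1}$. Since $g_{1}'>0$ on $(0,t_{0})$ and $g_{1}'<0$ on $(t_{0},\infty)$, the function $g_{1}$ increases and then decreases. Hence $t_{0}$ is the unique critical point in $(0,\infty)$, and $g_{1}(t_{0})=\max_{t\geqslant0}g_{1}(t)$. This maximum is positive, because for small $t>0$ the term $C_{1}t^{N-2}$ dominates and gives $g_{1}(t)>0=g_{1}(0)$.

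The only delicate point is the endpoint $t=0$. When $N\geqslant4$, $g_{1}'(0)=0$ is a trivial boundary critical point, so "unique critical point" should be understood in $(0,\infty)$, and I will state it that way. The sign analysis of $h$ is otherwise routine, since its two competing powers $t^{2-2s}$ and $t^{2}$ make it unimodal.
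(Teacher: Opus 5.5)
Your proof is correct. It differs from the paper's only in how you normalize $g_1'$.

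The paper divides $g_1'$ by the middle power $t^{N-2s-1}$ instead of $t^{N-3}$. The resulting function $k(t)=C_1(N-2)t^{2s-2}+C_2(N-2s)-C_3Nt^{2s}$ is strictly decreasing on $(0,\infty)$, since both $t^{2s-2}$ and $-t^{2s}$ decrease. So two zeros $\bar t<\bar{\bar t}$ would force $C_1(N-2)(\bar t^{2s-2}-\bar{\bar t}^{2s-2})=C_3N(\bar t^{2s}-\bar{\bar t}^{2s})$, with a positive left side and a negative right side. Existence comes from the sign of $g_1'$ for small and large $t$.

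That normalization makes uniqueness a one-line monotonicity argument with no second derivative. Your division by the lowest power leaves a unimodal $h$, so you need the extra computation of $h'$ and the split at $t_1$.

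In exchange, your argument spells out the full sign pattern of $g_1'$: positive on $(0,t_0)$, negative afterwards. This is what justifies that the critical point is the global maximum on $[0,\infty)$ and that this maximum is positive. The paper leaves this implicit; it follows equally from the monotonicity of $k$ together with $k(0^+)=+\infty$ and $k(+\infty)=-\infty$.

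Your caveat that $g_1'(0)=0$ when $N\geqslant4$ is accurate, so uniqueness must be read in $(0,\infty)$. That is also how the lemma is used in Lemma 5.2.
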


\begin{proof}
We compute
\begin{equation*}
\begin{aligned}
g_{1}'(t)
=
C_{1}(N-2)t^{N-3}
+
C_{2}(N-2s)t^{N-2s-1}
-
C_{3}Nt^{N-1}.
\end{aligned}
\end{equation*}
CLearly, $g_{1}'(t)>0$ for $t>0$ small, $g_{1}'(t)<0$ for $t>0$ large.
Then there exists $\bar{t}>0$ such that  $g_{1}'(\bar{t})=0$.

To prove the uniqueness of $\bar{t}$, we suppose that
$0<\bar{t}<\bar{\bar{t}}$
satisfy
$g_{1}'(\bar{t})=g_{1}'(\bar{\bar{t}})=0$.
Then
\begin{equation*}
\begin{aligned}
\frac{g_{1}'(\bar{t})}{\bar{t}^{N-2s-1}}
-
\frac{g_{1}'(\bar{\bar{t}})}{\bar{\bar{t}}^{N-2s-1}}=0,
\end{aligned}
\end{equation*}
which shows
\begin{equation*}
\begin{aligned}
C_{1}(N-2)
(\bar{t}^{2s-2}-\bar{\bar{t}}^{2s-2})
=
C_{3}N
(\bar{t}^{2s}-\bar{\bar{t}}^{2s}).
\end{aligned}
\end{equation*}
This yields a contradiction.
\end{proof}

\begin{lemma}\label{Lemma5.2}
For all
$u\in E\backslash\{0\}$,
there exists a unique
$t_{u}> 0$
such that
$u(\frac{x}{t_{u}})\in \mathcal{M}$.
\end{lemma}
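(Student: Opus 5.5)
Fix $u\in E\backslash\{0\}$ and consider the dilation $u_t(x):=u(x/t)$ for $t>0$. The plan is to compute how each term of $P$ scales under this dilation, reduce the condition $P(u_t)=0$ to a scalar equation in $t$, and solve it. A change of variables gives
\begin{equation*}
\begin{aligned}
\|u_t\|^{2}_{D^{1,2}(\mathbb{R}^{N})}=t^{N-2}\|u\|^{2}_{D^{1,2}(\mathbb{R}^{N})},\quad
\|u_t\|^{2}_{D^{s,2}(\mathbb{R}^{N})}=t^{N-2s}\|u\|^{2}_{D^{s,2}(\mathbb{R}^{N})},\quad
\int_{\mathbb{R}^{N}}|u_t|^{p}\mathrm{d}x=t^{N}\int_{\mathbb{R}^{N}}|u|^{p}\mathrm{d}x .
\end{aligned}
\end{equation*}
For the Gagliardo seminorm this uses the substitution $x=tx'$, $y=ty'$, so that $\mathrm{d}x\,\mathrm{d}y=t^{2N}\mathrm{d}x'\mathrm{d}y'$ and $|x-y|^{-N-2s}=t^{-N-2s}|x'-y'|^{-N-2s}$. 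In particular $u_t\in E\backslash\{0\}$, since these identities hold on $C_0^\infty$ and pass to the completion.

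Write $A=\|u\|^{2}_{D^{1,2}}$, $B=\|u\|^{2}_{D^{s,2}}$ and $D=\|u\|_{L^p}^p$. By Lemmas 2.1–2.2 all three are finite, and all are positive because $u\not\equiv0$. Then
\begin{equation*}
P(u_t)=\tfrac{N-2}{2}At^{N-2}+\tfrac{N-2s}{2}Bt^{N-2s}-\tfrac{N}{p}Dt^{N}.
\end{equation*}
Dividing by $t^{N-2}>0$, the condition $P(u_t)=0$ becomes $h(t)=0$, where
\begin{equation*}
h(t):=\tfrac{N-2}{2}A+\tfrac{N-2s}{2}Bt^{2-2s}-\tfrac{N}{p}Dt^{2}.
\end{equation*}

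It remains to show that $h$ has exactly one positive zero. We have $h(0^+)=\tfrac{N-2}{2}A>0$ and $h(t)\to-\infty$ as $t\to\infty$, because $2>2-2s$; existence of a zero then follows from continuity. For uniqueness, note that
\begin{equation*}
h(t)/t^{2-2s}=\tfrac{N-2}{2}At^{2s-2}+\tfrac{N-2s}{2}B-\tfrac{N}{p}Dt^{2s}
\end{equation*}
is strictly decreasing on $(0,\infty)$: the first term decreases since $2s-2<0$, and the last term decreases after the minus sign. Hence it vanishes at exactly one $t_u>0$. Alternatively, one can observe that $P(u_t)=t\,g_1'(t)$, where $g_1(t)=I(u_t)=\tfrac12At^{N-2}+\tfrac12Bt^{N-2s}-\tfrac1pDt^N$, and appeal to Lemma 5.1, which gives a unique critical point that is moreover the maximum of $t\mapsto I(u_t)$. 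This extra information will be useful later, so I would record it as well.

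The only step needing care is the scaling identity for the nonlocal seminorm, and confirming that $u_t\in E$ under the completion definition; everything else is elementary. No genuine obstacle is expected.
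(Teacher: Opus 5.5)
Your proof is correct and is essentially the paper's argument. The paper sets $g_2(t)=I(u(x/t))$, notes $t\,g_2'(t)=P(u(x/t))$, and invokes Lemma \ref{Lemma5.1}, whose uniqueness step divides $g_1'$ by $t^{N-2s-1}$; this is exactly your observation that $h(t)/t^{2-2s}=P(u_t)/t^{N-2s}$ is strictly decreasing. Your explicit check that $A,B,D>0$ for $u\neq0$ also supplies the hypothesis $C_1,C_2,C_3>0$ of Lemma \ref{Lemma5.1}, which the paper uses without comment.
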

\begin{proof}
Let
$u=E\setminus\{0\}$ and $t>0$. Set
\begin{equation*}
\begin{aligned}
g_{2}(t)
=&
I\left(u\left(\frac{x}{t}\right)\right)\\
=&
\frac{t^{N-2}}{2}
\|u\|_{D^{1,2}(\mathbb{R}^{N})}^{2}
+
\frac{t^{N-2s}}{2}
\|u\|_{D^{s,2}(\mathbb{R}^{N})}^{2}
-
\frac{t^{N}}{p}
\int_{\mathbb{R}^{N}}
|u|^{p}
\mathrm{d}x.
\end{aligned}
\end{equation*}
Clearly,
\begin{equation*}
\begin{aligned}
g_{2}'(t)
=&
\frac{N-2}{2}t^{N-3}
\|u\|_{D^{1,2}(\mathbb{R}^{N})}^{2}
+
\frac{N-2s}{2}
t^{N-2s-1}
\|u\|_{D^{s,2}(\mathbb{R}^{N})}^{2}
-
\frac{N}{p}t^{N-1}
\int_{\mathbb{R}^{N}}
|u|^{p}
\mathrm{d}x
\end{aligned}
\end{equation*}
and
\begin{equation}\label{5.1}
\begin{aligned}
tg_{2}'(t)=P\left(u\left(\frac{x}{t}\right)\right).
\end{aligned}
\end{equation}
Combining \eqref{5.1} and Lemma \ref{Lemma5.1},
the result is valid.
\end{proof}

\begin{lemma}\label{Lemma5.3}
\begin{enumerate}
\item [(i)]
$\mathcal{M}\not=\emptyset$
and for all
$u\in \mathcal{M}$,
$I(u)=\max\limits_{t>0}I\left(u\left(\frac{x}{t}\right)\right)$;

\item [(ii)]
$\inf\limits_{u\in \mathcal{M}}I(u)=
\inf\limits_{u\in E\setminus\{0\}}\max
\limits_{t\geqslant0}
I\left(u\left(\frac{x}{t}\right)\right)=c>0$, where
\begin{equation*}
\begin{aligned}
c=\inf\limits_{\gamma\in \Gamma}\sup\limits_{t\in[0, 1]}I(\gamma (t)),
\end{aligned}
\end{equation*}
and
\begin{equation*}
\begin{aligned}
\Gamma=\{\gamma\in C\left([0, 1], E\right)| \gamma(0)=0, I(\gamma (1))<0\}.
\end{aligned}
\end{equation*}

\end{enumerate}
	
\end{lemma}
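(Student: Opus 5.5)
Part (i) follows from Lemma \ref{Lemma5.2} together with the fiber map $g_{2}(t)=I(u(x/t))$. Given $u\in E\setminus\{0\}$, Lemma \ref{Lemma5.2} yields $t_{u}>0$ with $u(x/t_{u})\in\mathcal{M}$, so $\mathcal{M}\neq\emptyset$. For $u\in\mathcal{M}$ we have $g_{2}'(1)=P(u)=0$ by \eqref{5.1}. The constants $C_{1}=\frac12\|u\|_{D^{1,2}}^{2}$, $C_{2}=\frac12\|u\|_{D^{s,2}}^{2}$ and $C_{3}=\frac1p\|u\|_{L^p}^p$ are all positive, since $u\neq0$ forces both seminorms and the $L^p$ norm to be nonzero. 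So $g_{2}$ has the form treated in Lemma \ref{Lemma5.1}. Its unique critical point is therefore $t=1$, and that point is the global maximum on $(0,\infty)$. Hence $I(u)=\max_{t>0}I(u(x/t))$.

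For (ii), set $m:=\inf_{\mathcal{M}}I$ and $\tilde m:=\inf_{u\neq0}\max_{t\ge0}I(u(x/t))$.
\begin{itemize}
\item The identity $m=\tilde m$ follows from (i). For any $u\neq 0$ we get $\max_t I(u(x/t))=I(u(x/t_{u}))\ge m$. Conversely, for $u\in\mathcal{M}$ the maximum is $I(u)$.
\item To prove $\tilde m\ge c$, fix $u\neq0$ and consider the path $t\mapsto u(x/t)$. As $t\to0^{+}$ we have $\|u(\cdot/t)\|_{E}^{2}=t^{N-2}\|u\|_{D^{1,2}}^2+t^{N-2s}\|u\|_{D^{s,2}}^2\to0$, so the path starts at $0$ continuously in $E$. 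As $t\to\infty$, the $-t^{N}$ term dominates and $I(u(x/t))<0$. After reparametrizing $[0,T]$ to $[0,1]$ with $T$ large, this gives a path in $\Gamma$, so $c\le\max_{t}I(u(x/t))$.
\item To prove $m\le c$, I would use Lemma \ref{Lemma4.1}(4), which gives $c=\bar c=\inf_{\mathcal{N}}I$, together with the ground state $\Phi$ of Theorem \ref{Theorem1.2}. It then suffices to produce an element of $\mathcal{M}$ with energy at most $I(\Phi)=c$.
\end{itemize}

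The third item is the main obstacle, because we do not know a priori that $\Phi$ satisfies $P(\Phi)=0$. I would try two routes.

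The first route is an intersection argument: show directly that every mountain-pass path $\gamma\in\Gamma$ crosses $\mathcal{M}$. Near $0$ we have $P(v)>0$, because $P(v)\ge\frac{N-2s}{2}\|v\|_E^2-C\|v\|_E^p$ by Lemma \ref{Lemma2.2}. At $\gamma(1)$ we have $I<0$, and the identity
\[
P(v)=N I(v)-\|v\|_{D^{1,2}}^2-s\|v\|_{D^{s,2}}^2
\]
then gives $P(\gamma(1))<0$. By continuity some $\gamma(t_{0})\in\mathcal{M}$, so $m\le\max_t I(\gamma(t))$, and taking the infimum over $\Gamma$ gives $m\le c$.

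This first route yields $m=\tilde m=c$ directly, and positivity follows from Lemma \ref{Lemma4.1}(3). The second route, using the minimizer $Q$ from Lemma \ref{Lemma3.5}, is not needed. Its only role would be as a fallback: $Q$ satisfies both the Nehari identity and $P(Q)=0$, so if the path argument failed it could at least tie $\mathcal{M}$ and $\mathcal{N}$ together.
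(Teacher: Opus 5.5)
Your proof is correct and follows the paper's argument: (i) comes from Lemmas \ref{Lemma5.1}--\ref{Lemma5.2}, $\inf_{\mathcal{M}}I=\tilde m$ comes from maximizing along the dilation fiber, and $\inf_{\mathcal{M}}I\leqslant c$ holds because every $\gamma\in\Gamma$ crosses $\mathcal{M}$, since $P(\gamma(1))\leqslant N I(\gamma(1))<0$; for $c\leqslant\tilde m$ you use the dilation path $t\mapsto u(x/t)$, which is actually the right choice, because the paper's path $t\mapsto t\tilde{t}u$ only gives $c\leqslant\max_{t\geqslant0}I(tu)$ rather than a bound by the maximum over dilations. Two small repairs are needed: near $0$ the estimate should read $P(v)\geqslant\frac{N-2}{2}\|v\|_{E}^{2}-C\|v\|_{E}^{p}$ (your $\frac{N-2s}{2}$ is the wrong constant, since $N-2<N-2s$), and to ensure the crossing point is nonzero you should apply the intermediate value theorem on $[t_{1},1]$, where $t_{1}$ is the last time with $\|\gamma(t)\|_{E}\leqslant\rho$ and $\rho$ is small enough that $P>0$ on $\{0<\|v\|_{E}\leqslant\rho\}$ (the paper has the same gap).
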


\begin{proof}
(i)
From Lemmas \ref{Lemma5.1} and \ref{Lemma5.2},
we can deduce that.

(ii)
First, we show $\inf\limits_{u\in \mathcal{M}}I(u)>0$.
For $u\in \mathcal{M}$, by Lemma \ref{Lemma2.1},
we have
\begin{equation*}
\begin{aligned}
0
=&
P(u)
=
\frac{N-2}{2}
\|u\|_{D^{1,2}(\mathbb{R}^{N})}^{2}
+
\frac{N-2s}{2}
\|u\|_{D^{s,2}(\mathbb{R}^{N})}^{2}
-
\frac{N}{p}
\int_{\mathbb{R}^{N}}
|u|^{p}
\mathrm{d}x\\
\geqslant&
\frac{N-2}{2}
\|u\|_{E}^{2}
-
\frac{N}{p}
C_{N,p,s}
\|u\|_{E}^{p},
\end{aligned}
\end{equation*}
which gives
\begin{equation*}
\begin{aligned}
\|u\|_{E}
>C>0.
\end{aligned}
\end{equation*}
Then
\begin{equation*}
\begin{aligned}
I(u)
=&I(u)-\frac{1}{N}P(u)\\
=&
\frac{1}{2}
\left(1-\frac{N-2}{N}\right)
\|u\|_{D^{1,2}(\mathbb{R}^{N})}^{2}
+
\frac{1}{2}
\left(1-\frac{N-2s}{N}\right)
\|u\|_{D^{s,2}(\mathbb{R}^{N})}^{2}\\
\geqslant&
\frac{s}{N}
\|u\|_{E}^{2}
\geqslant
C>0.
\end{aligned}
\end{equation*}
This gives
$\inf\limits_{u\in \mathcal{M}}I(u)>0$.

Second, we prove that
$\inf\limits_{u\in \mathcal{M}}I(u)=
\inf\limits_{u\in E\setminus\{0\}}\max
\limits_{t\geqslant0}
I\left(u\left(\frac{x}{t}\right)\right)$.
Clearly,
\begin{equation*}
\begin{aligned}
\inf\limits_{u\in E\setminus\{0\}}\max
\limits_{t\geqslant0}
I\left(u\left(\frac{x}{t}\right)\right)
\leqslant
\inf\limits_{u\in \mathcal{M}}\max
\limits_{t\geqslant0}
I\left(u\left(\frac{x}{t}\right)\right)
=
\inf\limits_{u\in \mathcal{M}}I(u).
\end{aligned}
\end{equation*}
On the other hand, for any
$u\in E\setminus\{0\}$,
by Lemma \ref{Lemma5.2},
we have
\begin{equation*}
\begin{aligned}
\max
\limits_{t\geqslant0}
I\left(u\left(\frac{x}{t}\right)\right)
\geqslant
I\left(u\left(\frac{x}{t_{u}}\right)\right)
\geqslant
\inf\limits_{u\in \mathcal{M}}I(u).
\end{aligned}
\end{equation*}
This implies
\begin{equation*}
\begin{aligned}
\inf\limits_{u\in E\setminus\{0\}}\max
\limits_{t\geqslant0}
J\left(u\left(\frac{x}{t}\right)\right)
\geqslant
\inf\limits_{u\in \mathcal{M}}J(u).
\end{aligned}
\end{equation*}
Thus, the conclusion holds.

Third, we prove that $\inf\limits_{u\in E\setminus\{0\}}\max
\limits_{t\geqslant0}
I\left(u\left(\frac{x}{t}\right)\right)=c$:
For any $u\in E\setminus\{0\}$,
there exists some $\tilde{t}>0$ large,
such that $I(\tilde{t}u)<0$.
Define a path $\gamma:[0, 1]\to E$ by $\gamma(t) = t\tilde{t}u$.
Clearly, $\gamma\in\Gamma$ and
\begin{equation*}
\begin{aligned}
c=
\inf\limits_{\gamma\in \Gamma}\sup\limits_{t\in[0, 1]}I(\gamma (t))
\leqslant
\inf\limits_{u\in E\setminus\{0\}}\max
\limits_{t\geqslant0}
J\left(u\left(\frac{x}{t}\right)\right).
\end{aligned}
\end{equation*}
On the other hand, for each path $\gamma\in\Gamma$,
a direct calculation gives
\begin{equation*}
\begin{aligned}
I(\gamma(1))
-\frac{1}{N}
P(\gamma(1))
\geqslant
\left(\frac{1}{2}-\frac{N-2}{2N}\right)\|\gamma(1)\|_{D^{1,2}(\mathbb{R}^{N})}^{2}
+
\left(\frac{1}{2}-\frac{N-2s}{2N}\right)\|\gamma(1)\|_{D^{s,2}(\mathbb{R}^{N})}^{2}
\geqslant 0,
\end{aligned}
\end{equation*}
which implies
\begin{equation*}
\begin{aligned}
P(\gamma(1))
\leqslant N\cdot I(\gamma(1))
=N\cdot I(\tilde{t}u)<0.
\end{aligned}
\end{equation*}
Note that $P(\gamma(t))>0$ for $t$ small enough.
Then there exists $\tilde{\tilde{t}} \in(0,1)$ such that $P(\gamma(\tilde{\tilde{t}})) = 0$, i.e. $\gamma(\tilde{\tilde{t}})\in \mathcal{M}$ and
\begin{equation*}
\begin{aligned}
c=
\inf\limits_{\gamma\in \Gamma}\sup\limits_{t\in[0, 1]}I(\gamma (t))
\geqslant
\inf\limits_{u\in E\setminus\{0\}}\max
\limits_{t\geqslant0}
J\left(u\left(\frac{x}{t}\right)\right).
\end{aligned}
\end{equation*}
\end{proof}

\begin{lemma}\label{Lemma5.4}
For every $u\in E$,
$t>0$,
\begin{equation*}
\begin{aligned}
J(u)
-
\frac{1}{N}P(u)
-
J\left(u\left(\frac{x}{t}\right)\right)
+
\frac{t^{N}}{N}P(u)
=
g_{3}(t)
\|u\|_{D^{1,2}(\mathbb{R}^{N})}^{2}
+
g_{4}(t)
\|u\|_{D^{s,2}(\mathbb{R}^{N})}^{2},
\end{aligned}
\end{equation*}
where
\begin{equation*}
\begin{aligned}
g_{3}(t)
=&
\frac{1}{N}
+
\frac{N-2}{2N}t^{N}
-
\frac{1}{2}
t^{N-2}
>
g_{3}(1)=0,
\ \,
t\in(0,1)\cup (1,\infty),
\\
g_{4}(t)
=&
\frac{s}{N}
+
+
\frac{N-2s}{2N}t^{N}
-
\frac{1}{2}
t^{N-2s}
>
g_{4}(1)=0,
\ \,
t\in(0,1)\cup(1,\infty).
\end{aligned}
\end{equation*}	
\end{lemma}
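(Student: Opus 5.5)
This is a direct computation, where $J$ is read as $I$. I would split the left-hand side into the $D^{1,2}$, $D^{s,2}$ and $L^{p}$ parts, check that the $L^{p}$ parts cancel, and then show that each remaining coefficient function is positive away from $t=1$.

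\textbf{Step 1: expand each term.} Using the scalings already used in Lemma \ref{Lemma5.2}, I would write
\begin{equation*}
\begin{aligned}
I\left(u\left(\frac{x}{t}\right)\right)
=\frac{t^{N-2}}{2}\|u\|_{D^{1,2}(\mathbb{R}^{N})}^{2}
+\frac{t^{N-2s}}{2}\|u\|_{D^{s,2}(\mathbb{R}^{N})}^{2}
-\frac{t^{N}}{p}\int_{\mathbb{R}^{N}}|u|^{p}\mathrm{d}x.
\end{aligned}
\end{equation*}
The remaining terms are $I(u)-\frac{1}{N}P(u)$ and $\frac{t^{N}}{N}P(u)$, both expanded from the definition of $P$.

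\textbf{Step 2: collect coefficients.} The $L^{p}$ coefficient is
\begin{equation*}
-\frac1p+\frac1p+\frac{t^N}{p}-\frac{t^N}{p}=0,
\end{equation*}
so no $L^{p}$ term survives. The $D^{1,2}$ coefficient is
\begin{equation*}
\frac12-\frac{N-2}{2N}-\frac{t^{N-2}}{2}+\frac{(N-2)t^N}{2N}
=\frac1N+\frac{N-2}{2N}t^N-\frac12t^{N-2}=g_3(t).
\end{equation*}
Similarly, the $D^{s,2}$ coefficient is
\begin{equation*}
\frac12-\frac{N-2s}{2N}-\frac{t^{N-2s}}{2}+\frac{N-2s}{2N}t^N
=\frac sN+\frac{N-2s}{2N}t^N-\frac12t^{N-2s}=g_4(t).
\end{equation*}
The doubled ``$+\,+$'' in the statement of $g_{4}$ is just a typo.

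\textbf{Step 3: positivity of $g_{3}$ and $g_{4}$.} This is the only step needing an argument. For a general exponent $a\in\{2,2s\}$, I would set
\begin{equation*}
g(t)=\frac{a}{2N}+\frac{N-a}{2N}t^N-\frac12t^{N-a}.
\end{equation*}
Then
\begin{equation*}
g'(t)=\frac{N-a}{2}\,t^{N-a-1}\left(t^{a}-1\right).
\end{equation*}
Since $N>a$, $g'<0$ on $(0,1)$ and $g'>0$ on $(1,\infty)$. So $g$ has a strict global minimum on $(0,\infty)$ at $t=1$, where $g(1)=\frac{a}{2N}+\frac{N-a}{2N}-\frac12=0$. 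Hence $g(t)>0$ for $t\neq1$. Taking $a=2$ gives the claim for $g_3$, and $a=2s$ gives it for $g_4$; the only hypotheses used are $N\geqslant3$ and $s\in(0,1)$.
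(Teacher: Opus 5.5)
Your proposal is correct and takes the same route as the paper: expand $I(u)-\frac{1}{N}P(u)$ and $\frac{t^{N}}{N}P(u)-I\left(u\left(\frac{x}{t}\right)\right)$ using the scaling identities, observe that the $L^{p}$ terms cancel, and read off $g_{3}$ and $g_{4}$. The one thing you add is the derivative computation $g'(t)=\frac{N-a}{2}t^{N-a-1}(t^{a}-1)$, which proves $g_{3},g_{4}>0$ for $t\neq 1$; the paper asserts this sign but never verifies it, so your write-up is more complete than the original.
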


\begin{proof}
Clearly,
\begin{equation*}
\begin{aligned}
J(u)-\frac{1}{N}P(u)
=
\frac{1}{N}
\|u\|_{D^{1,2}(\mathbb{R}^{N})}^{2}
+
\frac{s}{N}
\|u\|_{D^{s,2}(\mathbb{R}^{N})}^{2},
\end{aligned}
\end{equation*}
and
\begin{equation*}
\begin{aligned}
&\frac{t^{N}}{N}
P(u)
-
J\left(u\left(\frac{x}{t}\right)\right)\\
=&
\frac{1}{2}
\left(
\frac{N-2}{N}t^{N}
-
t^{N-2}
\right)
\|u\|_{D^{1,2}(\mathbb{R}^{N})}^{2}
+
\frac{1}{2}
\left(
\frac{N-2s}{N}t^{N}
-
t^{N-2s}\right)
\|u\|_{D^{s,2}(\mathbb{R}^{N})}^{2}.
\end{aligned}
\end{equation*}
Combining the above two expressions, we have the desired results.
\end{proof}

\begin{lemma}\label{Lemma5.5}
For all $u\in \mathcal{M}$, we have $P'(u)\not=0$.
\end{lemma}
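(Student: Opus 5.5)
We show that the Gateaux derivative of $P$ at $u\in\mathcal{M}$ is nonzero by testing it against the dilation direction. Define
\begin{equation*}
h(t):=P\left(u\left(\tfrac{x}{t}\right)\right)
=\frac{N-2}{2}t^{N-2}\|u\|_{D^{1,2}(\mathbb{R}^{N})}^{2}
+\frac{N-2s}{2}t^{N-2s}\|u\|_{D^{s,2}(\mathbb{R}^{N})}^{2}
-\frac{N}{p}t^{N}\|u\|_{L^{p}(\mathbb{R}^{N})}^{p}.
\end{equation*}
The scaling formulas are the same as those used in Lemma \ref{Lemma5.2}. The curve $t\mapsto u(x/t)$ is $C^{1}$ into $E$ near $t=1$ when $u\in C_{0}^{\infty}$, but for general $u\in E$ this is less clear. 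To avoid that issue, we argue by contradiction. If $P'(u)=0$, then $u$ is a critical point of $P$, so $u$ weakly solves
\begin{equation*}
-(N-2)\Delta u+(N-2s)(-\Delta)^{s}u=N|u|^{p-2}u .
\end{equation*}

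\textbf{Step 1 (Nehari-type identity).} Test this equation against $u$ itself; this direction is certainly in $E$. We get
\begin{equation*}
(N-2)\|u\|_{D^{1,2}}^{2}+(N-2s)\|u\|_{D^{s,2}}^{2}=N\|u\|_{L^{p}}^{p}.
\end{equation*}

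\textbf{Step 2 (combine with $P(u)=0$).} The constraint $P(u)=0$ says
\begin{equation*}
(N-2)\|u\|_{D^{1,2}}^{2}+(N-2s)\|u\|_{D^{s,2}}^{2}=\frac{2N}{p}\|u\|_{L^{p}}^{p}.
\end{equation*}
Comparing the two identities gives $N\left(1-\frac{2}{p}\right)\|u\|_{L^{p}}^{p}=0$. Since $p>2$, this forces $\|u\|_{L^{p}}=0$. Then Step 1 yields $\|u\|_{D^{1,2}}=\|u\|_{D^{s,2}}=0$, so $u=0$ in $E$. This contradicts $u\in\mathcal{M}\subset E\setminus\{0\}$.

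\textbf{Main obstacle.} The only delicate point is justifying that $P$ is $C^{1}$ on $E$ with
\begin{equation*}
\langle P'(u),\varphi\rangle=(N-2)\int\nabla u\nabla\varphi+(N-2s)\iint\frac{(u(x)-u(y))(\varphi(x)-\varphi(y))}{|x-y|^{N+2s}}-N\int|u|^{p-2}u\varphi .
\end{equation*}
This follows as for $I'$. The quadratic parts are bounded bilinear forms on $E$ by Lemma \ref{Lemma2.1}. The $L^{p}$ part is $C^{1}$ because $E\hookrightarrow L^{p}(\mathbb{R}^{N})$ by Lemma \ref{Lemma2.2}, and $|u|^{p-2}u\in L^{p/(p-1)}$. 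With this formula, $\langle P'(u),u\rangle$ is exactly the left side minus the right side of Step 1. This gives the direct computation
\begin{equation*}
\langle P'(u),u\rangle=2P(u)-N\left(1-\frac{2}{p}\right)\|u\|_{L^{p}}^{p}=-N\frac{p-2}{p}\|u\|_{L^{p}}^{p}<0
\end{equation*}
for $u\in\mathcal{M}$. Strict negativity uses $\|u\|_{L^{p}}>0$, which holds because otherwise $P(u)=0$ would force $\|u\|_{E}=0$.
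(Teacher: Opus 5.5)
Your proof is correct and rests on the same idea as the paper: assume $\langle P'(u),u\rangle=0$ and combine it with $P(u)=0$ to reach a contradiction. The paper additionally uses the energy level $I(u)=k\geqslant c>0$ and solves a $3\times3$ linear system by Cramer's rule to force $\|u\|_{D^{s,2}(\mathbb{R}^{N})}^{2}<0$, whereas your direct elimination, giving $\langle P'(u),u\rangle=-N\frac{p-2}{p}\|u\|_{L^{p}(\mathbb{R}^{N})}^{p}<0$ on $\mathcal{M}$, is shorter and does not need the positivity of $c$.
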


\begin{proof}
For all $u\in \mathcal{M}$,
we recall some useful identities, $k\geqslant c$,
\begin{equation}\label{5.2}
\begin{aligned}
k=
\frac{1}{2}
\|u\|^{2}_{D^{1,2}(\mathbb{R}^{N})}
+
\frac{1}{2}
\|u\|^{2}_{D^{s,2}(\mathbb{R}^{N})}
-\frac{1}{p}\int_{\mathbb{R}^{N}}|u|^{p}\mathrm{d}x,
\end{aligned}
\end{equation}
and
\begin{equation}\label{5.3}
\begin{aligned}
0=P(u)=
\frac{N-2}{2}
\|u\|^{2}_{D^{1,2}(\mathbb{R}^{N})}
+
\frac{N-2s}{2}
\|u\|^{2}_{D^{s,2}(\mathbb{R}^{N})}
-\frac{N}{p}\int_{\mathbb{R}^{N}}|u|^{p}\mathrm{d}x.
\end{aligned}
\end{equation}
We prove that $P'(u)\not=0$.
Assume on the contrary that
\begin{equation}\label{5.4}
\begin{aligned}
0=\langle P'(u),u\rangle=
(N-2)
\|u\|^{2}_{D^{1,2}(\mathbb{R}^{N})}
+
(N-2s)
\|u\|^{2}_{D^{s,2}(\mathbb{R}^{N})}
-N\int_{\mathbb{R}^{N}}|u|^{p}\mathrm{d}x.
\end{aligned}
\end{equation}
Set
\begin{equation*}
\begin{aligned}
x_{1}=\|u\|^{2}_{D^{1,2}(\mathbb{R}^{N})},
~~x_{2}=\|u\|^{2}_{D^{s,2}(\mathbb{R}^{N})},
x_{3}=\int_{\mathbb{R}^{N}}|u|^{p}\mathrm{d}x.
\end{aligned}
\end{equation*}
Then we have
\begin{equation*}
\begin{aligned}
\begin{cases}
\frac{1}{2}x_{1}+\frac{1}{2}x_{2}-\frac{1}{p}x_{3}=k\\
\frac{N-2}{2}x_{1}+\frac{N-2s}{2}x_{2}-\frac{N}{p}x_{3}=0\\
(N-2)x_{1}+(N-2s)x_{2}-Nx_{3}=0\\
\end{cases}
\Leftrightarrow
D
\begin{bmatrix}
x_{1}\\
x_{2}\\
x_{3}\\
\end{bmatrix}
=
\begin{bmatrix}
k\\
0\\
0\\
\end{bmatrix},
\end{aligned}
\end{equation*}
where $D$ is the coefficient matrix of the system.
In the system, the first equation comes from \eqref{5.2},
the second one holds since \eqref{5.3},
and the third equation is \eqref{5.4}.

It is easy to see that
\begin{equation*}
\begin{aligned}
\mathrm{det} D
=
\left|
\begin{matrix}
\frac{1}{2}&\frac{1}{2}&-\frac{1}{p}\\
\frac{N-2}{2}&\frac{N-2s}{2}&-\frac{N}{p}\\
N-2&N-2s&-N\\
\end{matrix}
\right|
=&-\frac{N(p-2)(1-s)}{2p}
\end{aligned}
\end{equation*}
and
\begin{equation*}
\begin{aligned}
\mathrm{det} D_{1}
=
\left|
\begin{matrix}
k&\frac{1}{2}&-\frac{1}{p}\\
0&\frac{N-2s}{2}&-\frac{N}{p}\\
0&N-2s&-N\\
\end{matrix}
\right|
=&-k\frac{N(p-2)(N-2s)}{2p}
\end{aligned}
\end{equation*}
and
\begin{equation*}
\begin{aligned}
\mathrm{det} D_{2}
=
\left|
\begin{matrix}
\frac{1}{2}&k&-\frac{1}{p}\\
\frac{N-2}{2}&0&-\frac{N}{p}\\
N-2&0&-N\\
\end{matrix}
\right|
=&k\frac{N(p-2)(N-2)}{2p}
\end{aligned}
\end{equation*}
and
\begin{equation*}
\begin{aligned}
\mathrm{det} D_{3}
=
\left|
\begin{matrix}
\frac{1}{2}&\frac{1}{2}&k\\
\frac{N-2}{2}&\frac{N-2s}{2}&0\\
N-2&N-2s&0\\
\end{matrix}
\right|
=
0.
\end{aligned}
\end{equation*}
Then
\begin{equation*}
\begin{aligned}
x_{1}=\frac{\mathrm{det} D_{1}}{\mathrm{det} D}
=
\frac{2p}{N(p-2)(1-s)}\cdot
k\cdot
\frac{N(p-2)(N-2s)}{2p}
=
k\frac{N-2s}{1-s}>0
\end{aligned}
\end{equation*}
and
\begin{equation*}
\begin{aligned}
x_{2}=\frac{\mathrm{det} D_{2}}{\mathrm{det} D}
=
-
\frac{2p}{N(p-2)(1-s)}\cdot
k\cdot
\frac{N(p-2)(N-2)}{2p}
=
-k\frac{N-2}{1-s}<0
\end{aligned}
\end{equation*}
and
\begin{equation*}
\begin{aligned}
x_{3}=\frac{\mathrm{det} D_{1}}{\mathrm{det} D}
=
0
\end{aligned}
\end{equation*}
Note that $N>2s$, $1>s$ and $p>2$. Then $x_{2}<0$ and $x_{3}=0$,  what are not possible.
Hence, one has $P'(u)\not=0$ for all $u\in \mathcal{M}$.
\end{proof}

We recall the $(C)_{c}$  sequence as follows.
\begin{definition}
If sequence $\{u_{n}\}\subset E$ satisfies the condition
\begin{equation*}
\begin{aligned}
I(u_{n})\rightarrow c
~\mathrm{and}~\|I'(u_{n})\|_{E^{-1}}(1+\|u_{n}\|_{E})\rightarrow 0,
~{\rm as}~n\to\infty.
\end{aligned}
\end{equation*}
Then $\{u_{n}\}$ is called the Cerami sequence of $I$ with respect to $c$, short for $(C)_{c}$ sequence.
\end{definition}

We recall a general minimax principle
\cite[Proposition 2.8]{LiGB-WangCH2011AFM},
which is a somewhat stronger variant of \cite[Theorem 2.8]{Willem1996Book}.
\begin{lemma}\label{Lemma5.6}
Let $X$ be a Banach space and $M$ a metric space. Let $M_{0}$ be a closed subspace of $M$ and $\Gamma_{0}\subset C(M_{0},X)$.
Define
\begin{equation*}
\begin{aligned}
\Gamma:=\{\gamma\in C(M,X):\gamma|_{M_{0}}\in\Gamma_{0}\}.
\end{aligned}
\end{equation*}
If $\varphi\in C^{1}(X,\mathbb{R})$ satisfies
\begin{equation*}
\begin{aligned}
a:=\sup\limits_{\gamma_{0}\in \Gamma_{0}}\sup\limits_{u\in M_{0}}I(\gamma (t))
<
c:=\inf\limits_{\gamma\in \Gamma}\sup\limits_{u\in M}I(\gamma (t))<\infty,
\end{aligned}
\end{equation*}
then, for every $\varepsilon\in(0,\frac{c-a}{2})$,
$\delta>0$
and
$\gamma\in\Gamma$ such that
\begin{equation*}
\begin{aligned}
\sup\limits_{M}\varphi\circ\gamma\leqslant c+\varepsilon,
\end{aligned}
\end{equation*}
there exists $u\in X$ such that
\begin{enumerate}
\item [(i)] $c-2\varepsilon \leqslant \varphi(u)\leqslant c+2\varepsilon$;

\item [(ii)]
$\mathrm{dist}(u,\gamma(M))\leqslant 2\delta$;

\item [(iii)]
$(1+\|u_{n}\|_{X})\|\varphi'(u)\|_{X^{-1}}<\frac{8\varepsilon}{\delta}$, where $X^{-1}$ is the dual space of $X$.
\end{enumerate}
\end{lemma}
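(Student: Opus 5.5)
Lemma \ref{Lemma5.6} is taken from \cite[Proposition 2.8]{LiGB-WangCH2011AFM}, so in the paper it suffices to quote it. If I had to prove it, I would mimic the proof of the quantitative deformation lemma \cite[Lemma 2.3, Theorem 2.8]{Willem1996Book}, but with the Cerami weight $(1+\|u\|_{X})$ built into the vector field. I read the sup in the definition of $a$ as $\sup_{u\in M_{0}}\varphi(\gamma_{0}(u))$, with $I$ replaced by $\varphi$ and $I(\gamma(t))$ by $\varphi(\gamma(u))$ throughout.

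\textbf{Setup.} Argue by contradiction. Fix $\varepsilon\in(0,\frac{c-a}{2})$, $\delta>0$ and $\gamma\in\Gamma$ with $\sup_{M}\varphi\circ\gamma\leqslant c+\varepsilon$. Suppose every $u$ in
\[
S:=\{u\in X:\ |\varphi(u)-c|\leqslant 2\varepsilon,\ \mathrm{dist}(u,\gamma(M))\leqslant 2\delta\}
\]
satisfies $(1+\|u\|_{X})\|\varphi'(u)\|_{X^{-1}}\geqslant 8\varepsilon/\delta$.

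\textbf{Deformation.} On $\{\varphi'\neq0\}$ take a locally Lipschitz pseudo-gradient field $v$ with $\|v(u)\|\leqslant 2\|\varphi'(u)\|$ and $\langle\varphi'(u),v(u)\rangle\geqslant\|\varphi'(u)\|^{2}$. Take a locally Lipschitz cut-off $\psi$ with $0\leqslant\psi\leqslant 1$, $\psi=1$ on $S$, and $\psi=0$ off a slightly larger neighbourhood $S'\subset\{|\varphi-c|\leqslant 3\varepsilon\}$ that still lies where the hypothesis holds. Set
\[
W(u)=-\psi(u)\frac{v(u)}{\|v(u)\|^{2}},
\]
and let $\eta(t,u)$ be its flow. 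Along the flow, $\frac{d}{dt}\varphi(\eta)\leqslant -\psi/4$. The speed is at most $\psi/\|\varphi'(\eta)\|\leqslant \psi\,\delta(1+\|\eta\|_{X})/(8\varepsilon)$.

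\textbf{Conclusion.} Run the flow up to time $T=8\varepsilon$. Any point of $\gamma(M)$ that stays in $S$ throughout has its energy pushed from $\leqslant c+\varepsilon$ below $c-\varepsilon$, provided it does not leave the $2\delta$-neighbourhood of $\gamma(M)$. Points in $\gamma(M_{0})$ have $\varphi\leqslant a<c-2\varepsilon$, so $\psi=0$ there and they do not move. Hence $\tilde\gamma:=\eta(T,\gamma(\cdot))$ is continuous, satisfies $\tilde\gamma|_{M_{0}}=\gamma|_{M_{0}}\in\Gamma_{0}$, so $\tilde\gamma\in\Gamma$, and $\sup_{M}\varphi\circ\tilde\gamma\leqslant c-\varepsilon$. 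This contradicts the definition of $c$, so some $u\in S$ satisfies (iii), and (i), (ii) hold because $u\in S$.

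\textbf{Main obstacle.} The hard step is controlling the displacement, because of the weight $(1+\|\eta\|_{X})$. The crude bound gives $\|\eta(t,u)-u\|\leqslant \delta(1+\sup_{\tau\leqslant t}\|\eta(\tau,u)\|)$, which is not $\leqslant 2\delta$ uniformly.

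My first attempt is a Gronwall estimate on $\log(1+\|\eta(t,u)\|_{X})$, whose derivative is bounded by $\psi\delta/(8\varepsilon)$. This gives $1+\|\eta(t,u)\|\leqslant(1+\|u\|)e^{t\delta/(8\varepsilon)}$ and so controls how the norm grows. If that is not enough to obtain the \emph{absolute} distance bound $2\delta$, I would switch, as Li--Wang and Cerami do, to the Riemannian-type distance with density $(1+\|u\|_{X})^{-1}$. In that distance the speed is $\leqslant\delta/(8\varepsilon)$, so the displacement over time $8\varepsilon$ is $\leqslant\delta$ outright. I would then check that the statement of (ii), as used in Section 5, only needs closeness in this sense, or in the original norm on bounded sets of $\gamma(M)$. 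Reconciling these two notions of distance is the step I expect to require the most care.
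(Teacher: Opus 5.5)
The paper does not prove this lemma; it only quotes \cite[Proposition 2.8]{LiGB-WangCH2011AFM}, so your first sentence matches what the paper does. Your reading of the typos ($\varphi$ for $I$, $\varphi(\gamma_{0}(u))$ for $I(\gamma(t))$, $\|u\|_{X}$ for $\|u_{n}\|_{X}$) is also the right one.

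Your sketch, however, has a genuine gap exactly where you flag it, and the gap cannot be closed. With the norm distance in (ii) and the Cerami weight in (iii), the statement as printed is false. Here is a counterexample.
\begin{itemize}
\item Take $X=\mathbb{R}^{2}$, $\varphi(x,y)=10^{-4}y^{2}-x^{2}$, $M=[0,1]$, $M_{0}=\{0,1\}$, and $\Gamma_{0}=\{\gamma_{0}\}$ with $\gamma_{0}(0)=(-10,100)$, $\gamma_{0}(1)=(10,100)$.
\item Every admissible path crosses $\{x=0\}$, where $\varphi\geqslant0$, and a path through the origin has maximum $0$. So $c=0$, while $a=-99$.
\item With $\varepsilon=1$, $\delta=10$ and $\gamma(t)=(20t-10,100)$, we have $\sup_{M}\varphi\circ\gamma=1=c+\varepsilon$.
\item Every $u=(x,y)$ with $\mathrm{dist}(u,\gamma(M))\leqslant20$ has $y\geqslant80$. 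Hence $\|u\|\geqslant80$ and $\|\varphi'(u)\|\geqslant2\cdot10^{-4}y\geqslant0.016$.
\item Therefore $(1+\|u\|)\|\varphi'(u)\|\geqslant81\cdot0.016>1>8\varepsilon/\delta$, so no $u$ satisfies (ii) and (iii) together.
\end{itemize}
This is precisely your obstruction. Near points of norm $R$, the contradiction hypothesis only guarantees $\|\varphi'\|\geqslant 8\varepsilon/(\delta(1+R))$. A deformation staying within $2\delta$ of $\gamma(M)$ is then only guaranteed to lower $\varphi$ by about $16\varepsilon/(1+R)$, not by $2\varepsilon$. Your Gronwall bound can therefore give at best a displacement of order $\delta(1+\sup_{M}\|\gamma\|)$. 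The weighted-metric fallback proves a different statement, not this one.

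What is true, and is all that Lemma \ref{Lemma5.7} needs, is the Palais--Smale version \cite[Theorem 2.8]{Willem1996Book}: (i), (ii) with the absolute bound $2\delta$, and $\|\varphi'(u)\|_{X^{-1}}\leqslant8\varepsilon/\delta$ without the weight. A Cerami conclusion follows from it only with a size-dependent distance. Let $K=\sup_{M}\|\gamma\|$ and $\delta<\frac12$, and apply it with $\delta(1+K)/(1-2\delta)$ in place of $\delta$. This gives $(1+\|u\|)\|\varphi'(u)\|\leqslant8\varepsilon/\delta$ at distance $\leqslant2\delta(1+K)/(1-2\delta)$.

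Applied to $J$, Willem's version still yields $|z_{n}|\leqslant2/n$, $I(u_{n})\to c$, $P(u_{n})\to0$ and $\|I'(u_{n})\|_{E^{-1}}\to0$. Boundedness, and hence the Cerami property, then comes for free from
\[
I(u_{n})-\frac{1}{N}P(u_{n})=\frac{1}{N}\|u_{n}\|_{D^{1,2}(\mathbb{R}^{N})}^{2}+\frac{s}{N}\|u_{n}\|_{D^{s,2}(\mathbb{R}^{N})}^{2}\geqslant\frac{s}{N}\|u_{n}\|_{E}^{2}.
\]
So the right fix is to replace the lemma by this version, rather than to try to prove it as stated.
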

In the following, we apply Lemma \ref{Lemma5.6} to obtain a Cerami sequence for the functional $I$ with $P(u_{n})\to0$.

\begin{lemma}\label{Lemma5.7}
Let $N\geqslant3$ and $0<s<1$.
Then there exists a bounded sequence $\{u_{n}\}\subset E$ such that
\begin{equation*}
\begin{aligned}
I(u_{n})\rightarrow c,\ \
P(u_{n})\rightarrow 0,
\ \,\mathrm{and}\ \,
\|I'(u_{n})\|_{E^{-1}}(1+\|u_{n}\|_{E})\rightarrow 0,\ \mathrm{as}\ n\rightarrow \infty.
\end{aligned}
\end{equation*}
\end{lemma}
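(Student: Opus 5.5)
The plan is the Jeanjean-type augmented functional, as in Li--Wang \cite{LiGB-WangCH2011AFM}. On $X:=\mathbb{R}\times E$ with norm $\|(\theta,v)\|=(|\theta|^{2}+\|v\|_{E}^{2})^{1/2}$ I define
\begin{equation*}
\tilde{I}(\theta,v):=I\bigl(v(e^{-\theta}x)\bigr)=\frac{e^{(N-2)\theta}}{2}\|v\|_{D^{1,2}(\mathbb{R}^{N})}^{2}+\frac{e^{(N-2s)\theta}}{2}\|v\|_{D^{s,2}(\mathbb{R}^{N})}^{2}-\frac{e^{N\theta}}{p}\int_{\mathbb{R}^{N}}|v|^{p}\mathrm{d}x .
\end{equation*}
This functional is $C^{1}$ on $X$. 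A direct computation gives $\partial_{\theta}\tilde{I}(\theta,v)=P(v(e^{-\theta}x))$, which is the same identity as \eqref{5.1}. I then apply Lemma \ref{Lemma5.6} with $M=[0,1]$, $M_{0}=\{0,1\}$ and $\Gamma_{0}=\{\tilde\gamma_{0}\}$, where $\tilde\gamma_{0}(0)=(0,0)$ and $\tilde\gamma_{0}(1)=(0,e)$ for a fixed $e$ with $I(e)<0$. The class $\tilde{\Gamma}$ consists of paths in $X$ with these endpoints, and I set $\tilde{c}=\inf_{\tilde\gamma\in\tilde\Gamma}\sup_{t}\tilde{I}(\tilde\gamma(t))$.

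\textbf{Step 1: $\tilde c=c$.} Embedding $\gamma\mapsto(0,\gamma)$ gives $\tilde c\le c$. Conversely, $h(\theta,v)=v(e^{-\theta}\cdot)$ maps paths in $\tilde\Gamma$ to paths in $\Gamma$ and satisfies $\tilde I=I\circ h$, so $\tilde c\ge c$. The endpoint level is $a=\max\{0,I(e)\}=0<c$, using Lemma \ref{Lemma5.3}(ii) for $c>0$.

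\textbf{Step 2: choose good almost-optimal paths.} By Lemma \ref{Lemma5.3}(ii) and the mountain pass characterization, for each $n$ I pick $\gamma_{n}\in\Gamma$ with $\sup I\circ\gamma_{n}\le c+1/n$. I would take $\gamma_{n}(t)=tT u_{n}^{0}$ with $u_{n}^{0}$ nearly optimal for $\bar{\bar c}$ from Lemma \ref{Lemma4.1}(4), so the paths are bounded in $E$. Applying Lemma \ref{Lemma5.6} to $\tilde\gamma_{n}=(0,\gamma_{n})$ with $\varepsilon_{n}=1/n$ and $\delta_{n}=1/\sqrt n$ yields $(\theta_{n},v_{n})\in X$ such that:
\begin{itemize}
\item[(a)] $\tilde I(\theta_{n},v_{n})\to c$;
\item[(b)] $|\theta_{n}|\le 2\delta_{n}\to0$, because the path lies in $\{0\}\times E$;
\item[(c)] $(1+\|(\theta_{n},v_{n})\|)\|\tilde I'(\theta_{n},v_{n})\|\to0$.
\end{itemize}

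\textbf{Step 3: transfer to $u_{n}=v_{n}(e^{-\theta_{n}}x)$.}
\begin{itemize}
\item From (a), $I(u_{n})=\tilde I(\theta_{n},v_{n})\to c$.
\item The $\theta$-component of (c) gives $P(u_{n})=\partial_{\theta}\tilde I(\theta_{n},v_{n})\to0$.
\item For $\varphi\in E$, set $\psi(x)=\varphi(e^{\theta_{n}}x)$. Then $\langle I'(u_{n}),\varphi\rangle=\langle\partial_{v}\tilde I(\theta_{n},v_{n}),\psi\rangle$. Since $\theta_{n}\to0$, the norms $\|\psi\|_{E}$ and $\|\varphi\|_{E}$ are comparable with constants tending to $1$, and likewise $\|u_{n}\|_{E}$ and $\|v_{n}\|_{E}$. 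Hence $(1+\|u_{n}\|_{E})\|I'(u_{n})\|_{E^{-1}}\to0$.
\end{itemize}

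\textbf{Step 4: boundedness, the delicate point.} Lemma \ref{Lemma5.6} does not directly bound $v_{n}$. I would combine the previous limits:
\begin{equation*}
I(u_{n})-\frac1N P(u_{n})=\frac1N\|u_{n}\|_{D^{1,2}(\mathbb{R}^{N})}^{2}+\frac sN\|u_{n}\|_{D^{s,2}(\mathbb{R}^{N})}^{2}\ge\frac sN\|u_{n}\|_{E}^{2},
\end{equation*}
which is the identity of Lemma \ref{Lemma5.4} at $t=1$. The left side tends to $c$, so $\|u_{n}\|_{E}^{2}\le \frac{N}{s}(c+o(1))$, and $\{u_{n}\}$ is bounded. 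The main care in the whole argument is verifying that $\tilde I$ is $C^{1}$ in $\theta$ jointly with $v$ and justifying the test-function rescaling in Step 3, since $\theta_{n}\to0$ controls all the scaling constants.
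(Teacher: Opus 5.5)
Your proof follows the paper's: the augmented functional $I(v(e^{-\theta}x))$ on $\mathbb{R}\times E$, Lemma \ref{Lemma5.6} applied to almost-optimal paths $(0,\gamma_n)$, the bound $|\theta_n|\le 2\delta_n$ from the distance estimate, and the rescaled test functions $\varphi(e^{\theta_n}x)$. Two of your steps improve on the paper. Your Step 4, using $I(u_n)-\frac1N P(u_n)\geqslant\frac sN\|u_n\|_E^2$, proves the boundedness asserted in the statement, which the paper's proof never checks (it only reappears in Lemma \ref{Lemma5.8}). Your Step 3 also handles the comparison of $1+\|u_n\|_E$ with $1+\|(\theta_n,v_n)\|$, which the paper skips.

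There is one inconsistency you need to repair. You take $\Gamma_0=\{\tilde\gamma_0\}$ with the fixed endpoint $\tilde\gamma_0(1)=(0,e)$, so $\tilde\Gamma$ contains only paths from $(0,0)$ to $(0,e)$. This breaks two steps.
\begin{itemize}
\item In Step 1, the claim that embedding $\gamma\mapsto(0,\gamma)$ gives $\tilde c\le c$ is unjustified. A path in $\Gamma$ ends at an arbitrary point of $\{I<0\}$, not at $e$. With a fixed endpoint you only get $\tilde c=c_e:=\inf\{\max I\circ\gamma:\gamma(0)=0,\ \gamma(1)=e\}\geqslant c$.
\item In Step 2, your paths $\gamma_n(t)=tTu_n^0$ end at $(0,Tu_n^0)\neq(0,e)$. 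So $(0,\gamma_n)\notin\tilde\Gamma$, and Lemma \ref{Lemma5.6} cannot be applied to them.
\end{itemize}
The simplest fix is the paper's choice. Let $\Gamma_0$ be all maps $\gamma_0:\{0,1\}\to\mathbb{R}\times E$ with $\gamma_0(0)=(0,0)$ and $\tilde I(\gamma_0(1))<0$. Then $a=0$, $(0,\gamma)\in\tilde\Gamma$ for every $\gamma\in\Gamma$, and $h\circ\tilde\gamma\in\Gamma$ for every $\tilde\gamma\in\tilde\Gamma$. This gives $\tilde c=c$ immediately and makes your $\gamma_n$ admissible.

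Alternatively, keep the fixed endpoint and prove $c_e=c$ by showing that $\{I<0\}$ is path-connected. This holds here: $E\setminus\{0\}$ is path-connected, and any compact path in it has $\|\cdot\|_{L^p}$ bounded below and $\|\cdot\|_E$ bounded above, so it can be scaled into $\{I<0\}$. You would then concatenate each $\gamma_n$ with a path inside $\{I<0\}$ ending at $e$. With either change, the rest of your argument goes through.
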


\begin{proof}
Let us define the continuous map $h:\mathbb{R}\times E\to E$ as
\begin{equation*}
\begin{aligned}
h(z,v)=v(e^{-z}x),~~z\in\mathbb{R},~~v\in E,~~x\in\mathbb{R}^{N},
\end{aligned}
\end{equation*}
where $\mathbb{R}\times E$ is a Banach space, with the product norm $\|(z,v)\|_{\mathbb{R}\times E}^{2}=|z|^{2}+\|v\|_{E}^{2}$.
We consider the auxiliary functional
\begin{equation*}
\begin{aligned}
J(z,v)=I(h(z,v))=
\frac{e^{(N-2)z}}{2}
\|v\|^{2}_{D^{1,2}(\mathbb{R}^{N})}
+
\frac{e^{(N-2s)z}}{2}
\|v\|^{2}_{D^{s,2}(\mathbb{R}^{N})}
-\frac{e^{Nz}}{p}\int_{\mathbb{R}^{N}}|v|^{p}\mathrm{d}x.
\end{aligned}
\end{equation*}
Clearly,
\begin{equation*}
\begin{aligned}
\frac{d}{\mathrm{d}z}J(z,v)=P(h(z,v))
\end{aligned}
\end{equation*}
and
\begin{equation*}
\begin{aligned}
\langle\frac{d}{\mathrm{d}v}J(z_{1},v),(z_{2},v)\rangle
=
\langle\frac{d}{\mathrm{d}v}I(h(z_{1},v)),h(z_{2},w)\rangle
=
\langle I'(h(z_{1},v)),h(z_{2},w)\rangle
\end{aligned}
\end{equation*}
and
\begin{equation}\label{5.5}
\begin{aligned}
\langle J'(z_{1},v),(z_{2},w)\rangle
=&
\left.\frac{\mathrm{d}}{\mathrm{d}t}\right|_{t=0}J((z_{1}+tz_{2},v+tw))\\
=&
\left.\frac{\mathrm{d}}{\mathrm{d}t}\right|_{t=0}I(h(z_{1}+tz_{2},v+tw))\\
=&
\langle\frac{d}{\mathrm{d}v}I(h(z_{1},v)),h(z_{2},w)\rangle
+
P(h(z_{1},v))z_{2}\\
=&
\langle I'(h(z_{1},v)),h(z_{2},w)\rangle
+
P(h(z_{1},v))z_{2}.
\end{aligned}
\end{equation}
for all $z,z_{1},z_{2}\in \mathbb{R}$ and $v,w\in E$.

Set the minimax value
\begin{equation*}
\begin{aligned}
\tilde{c}:=\inf\limits_{\tilde{\gamma}\in \tilde{\Gamma}}\sup\limits_{t\in [0,1]}J(\tilde{\gamma}(t)),
\end{aligned}
\end{equation*}
where
\begin{equation*}
\begin{aligned}
\tilde{\Gamma}=\{\tilde{\gamma}\in C\left([0, 1], \mathbb{R}\times E\right)| \tilde{\gamma}(0)=(0,0), J(\tilde{\gamma}(1))<0\}.
\end{aligned}
\end{equation*}
Note that $\Gamma:=\{h\circ \tilde{\gamma}:\tilde{\gamma}\in \tilde{\Gamma}\}$. Then
\begin{equation*}
\begin{aligned}
c=\tilde{c}.
\end{aligned}
\end{equation*}
By the definition of $c$, for every $n\in \mathbb{N}$, there exists $\gamma_{n}\in \Gamma$ such that
\begin{equation*}
\begin{aligned}
\sup\limits_{t\in [0,1]}J(0,\gamma(t))
=
\sup\limits_{t\in [0,1]}I(\gamma(t))
\leqslant c+\frac{1}{n^{2}}.
\end{aligned}
\end{equation*}
We apply Lemma \ref{Lemma5.6} with $X=\mathbb{R}\times E$, $M=[0,1]$, $M_{0}=\{0,1\}$ and $\varphi=J$.
Let $\varepsilon_{n}=\frac{1}{n^{2}}$, $\delta_{n}=\frac{1}{n}$ and $\tilde{\gamma}(t)=(0,\gamma(t))$.
Clearly, $\varepsilon_{n}=\frac{1}{n^{2}}\in(0,\frac{c}{2})$ for $n$ large.
Then Lemma \ref{Lemma5.6} yields the existence of $(z_{n},v_{n})\in \mathbb{R}\times E$ such that
\begin{equation*}
\begin{aligned}
(i)&J(z_{n},v_{n})\to c,\\
(ii)&\|J'(z_{n},v_{n})\|_{(\mathbb{R}\times E)^{-1}}(1+\|(z_{n},v_{n})\|_{\mathbb{R}\times E})< \frac{8}{n},\\
(iii)&\mathrm{dist}((z_{n},v_{n}),\{0\}\times \gamma_{n}([0,1]))\leqslant \frac{2}{n}.
\end{aligned}
\end{equation*}
By using (iii), one has  $z_{n}\to0$ as
\begin{equation*}
\begin{aligned}
|z_{n}|=|z_{n}-0|\leqslant \mathrm{dist}((z_{n},v_{n}),\{0\}\times \gamma_{n}([0,1]))\leqslant \frac{2}{n}.
\end{aligned}
\end{equation*}
From (ii) we know
\begin{equation*}
\begin{aligned}
\|J'(z_{n},v_{n})\|_{(\mathbb{R}\times E)^{-1}}< \frac{8}{n}
\end{aligned}
\end{equation*}
Furthermore, taking $z_{2}=1$ and $w=0$ in \eqref{5.5}, we have
\begin{equation*}
\begin{aligned}
P(h(z_{n},v_{n}))\to 0, ~~\mathrm{as}~~n\to\infty.
\end{aligned}
\end{equation*}
Let $u_{n}:=h(z_{n},v_{n})$.
We have $I(u_{n})\to c$ and $P(u_{n})\to 0$ as $n\to \infty$. We now claim that
\begin{equation*}
\begin{aligned}
\|I'(u_{n})\|_{E^{-1}}(1+\|u_{n}\|_{E})\to 0,~~\mathrm{as}~~n\to\infty.
\end{aligned}
\end{equation*}
For any $\phi\in E$, we set $\tilde{\phi}=h(-z_{n},\phi)$. Then
\begin{equation*}
\begin{aligned}
\|(0,\tilde{\phi})\|_{\mathbb{R}\times E}^{2}
=\|\tilde{\phi}\|_{E}^{2}
=e^{-(N-2)z_{n}}\|\phi\|^{2}_{D^{1,2}(\mathbb{R}^{N})}
+e^{-(N-2s)z_{n}}\|\phi\|^{2}_{D^{s,2}(\mathbb{R}^{N})}
\leqslant C\|\phi\|_{E}^{2}.
\end{aligned}
\end{equation*}
Then
\begin{equation*}
\begin{aligned}
|
\langle
I'(u_{n}),\phi
\rangle
|
(1+\|u_{n}\|_{E})
=&
|
\langle
J'(z_{n},v_{n}),(0,\tilde{\phi})
\rangle
|
(1+\|(z_{n},v_{n})\|_{\mathbb{R}\times E})\\
\leqslant&
\|J'(z_{n},v_{n})\|_{(\mathbb{R}\times E)^{-1}}
\|(0,\tilde{\phi})\|_{\mathbb{R}\times E}
(1+\|(z_{n},v_{n})\|_{\mathbb{R}\times E})\\
<&\frac{8}{n}C\|\phi\|_{E}^{2}.
\end{aligned}
\end{equation*}

\end{proof}

\begin{lemma}\label{Lemma5.8}
Let $\{u_{n}\}$
be a sequence of
$c>0$ as follows:
\begin{equation*}
\begin{aligned}
I(u_{n})\to c,~~
P(u_{n})\to 0,~~\mathrm{and}~~ \|I'(u_{n})\|_{E^{-1}}(1+\|u_{n}\|_{E})\to 0,\ \mathrm{as}\ n\rightarrow \infty.
\end{aligned}
\end{equation*}
Then it is a bounded sequence in $E$.
If
$u_{n}\rightharpoonup u$
in
$E$,
then
$\|I'(u)\|_{E^{-1}}=0$ and $P(u)=0$. Moreover, if $u\not\equiv0$, then $I(u)=c$.
\end{lemma}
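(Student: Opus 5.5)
The plan has three parts: boundedness from the combination $I-\frac{1}{N}P$, the Euler–Lagrange equation for the weak limit by weak convergence, and then the Pohožaev identity and energy level for that limit.

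\textbf{Boundedness.} We use the combination already computed in Lemma \ref{Lemma5.4}, namely
\begin{equation*}
I(u_{n})-\frac{1}{N}P(u_{n})=\frac{1}{N}\|u_{n}\|_{D^{1,2}(\mathbb{R}^{N})}^{2}+\frac{s}{N}\|u_{n}\|_{D^{s,2}(\mathbb{R}^{N})}^{2}\geqslant\frac{s}{N}\|u_{n}\|_{E}^{2}.
\end{equation*}
The left-hand side tends to $c$, since $I(u_{n})\to c$ and $P(u_{n})\to0$. Hence $\{u_{n}\}$ is bounded in $E$.

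\textbf{The limit solves the equation.} Assume $u_{n}\rightharpoonup u$ in $E$. After passing to a subsequence, $u_{n}\to u$ a.e. and in $L^{p}_{loc}(\mathbb{R}^{N})$, by the local compact embedding used in Lemma \ref{Lemma2.5}. Fix $\varphi\in C_{0}^{\infty}(\mathbb{R}^{N})$.
\begin{itemize}
\item The two quadratic terms of $\langle I'(u_{n}),\varphi\rangle$ converge by weak convergence in $D^{1,2}$ and $D^{s,2}$ (Lemma \ref{Lemma2.1}).
\item The sequence $|u_{n}|^{p-2}u_{n}$ is bounded in $L^{p/(p-1)}$ and converges a.e., so it converges weakly in $L^{p/(p-1)}$. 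This gives $\int|u_{n}|^{p-2}u_{n}\varphi\to\int|u|^{p-2}u\varphi$.
\end{itemize}
Since $\langle I'(u_{n}),\varphi\rangle\to0$, we get $\langle I'(u),\varphi\rangle=0$, and density gives $I'(u)=0$ in $E^{-1}$. In particular $\langle I'(u),u\rangle=0$.

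\textbf{Pohožaev identity and energy level.} This is the main obstacle: no regularity is available, so $P(u)=0$ cannot come from a Pohožaev argument applied to the equation. If $u\equiv0$, then $P(u)=0$ holds trivially, so assume $u\not\equiv0$. The plan is to pin down the three quantities
\begin{equation*}
x_{1}=\|u\|_{D^{1,2}(\mathbb{R}^{N})}^{2},\qquad x_{2}=\|u\|_{D^{s,2}(\mathbb{R}^{N})}^{2},\qquad x_{3}=\|u\|_{L^{p}(\mathbb{R}^{N})}^{p}.
\end{equation*}

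First I would get the upper bound $I(u)\leqslant c$. Weak lower semicontinuity gives
\begin{equation*}
I(u)-\frac{1}{N}P(u)=\frac{1}{N}x_{1}+\frac{s}{N}x_{2}\leqslant\liminf_{n\to\infty}\Big(I(u_{n})-\frac{1}{N}P(u_{n})\Big)=c .
\end{equation*}
Next I would compare with the Nehari–Pohožaev manifold. Lemma \ref{Lemma5.2} gives a unique $t_{u}>0$ with $u(\cdot/t_{u})\in\mathcal{M}$. Then Lemma \ref{Lemma5.3}(ii) and Lemma \ref{Lemma5.4} (with $t=t_{u}$) give
\begin{equation*}
c\leqslant I\Big(u\Big(\frac{x}{t_{u}}\Big)\Big)\leqslant I(u)-\frac{1}{N}P(u)+\frac{t_{u}^{N}}{N}P(u)\leqslant c+\frac{t_{u}^{N}-1}{N}P(u).
\end{equation*}
This controls the sign of $P(u)$: if $P(u)\neq0$, then $t_{u}$ lies on the side of $1$ for which $(t_{u}^{N}-1)P(u)\geqslant0$. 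The sign information by itself does not give $P(u)=0$, so more is needed.

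To close the argument I would bring in the Nehari identity $\langle I'(u),u\rangle=0$, i.e. $x_{1}+x_{2}=x_{3}$. Together with $\frac{1}{N}x_{1}+\frac{s}{N}x_{2}\leqslant c$ this gives a linear system in $(x_{1},x_{2},x_{3})$ like the one in Lemma \ref{Lemma5.5}. I would then try to use the strict inequality $g_{3},g_{4}>0$ for $t\neq1$ from Lemma \ref{Lemma5.4} to force $t_{u}=1$. If that fails, I would fall back on the Brezis–Lieb splitting for $u_{n}-u$ in all three terms. The remainder $w_{n}=u_{n}-u$ then satisfies
\begin{equation*}
I(w_{n})\to c-I(u),\qquad P(w_{n})\to-P(u),\qquad \langle I'(w_{n}),w_{n}\rangle\to0 .
\end{equation*}
Applying the Nehari/Pohožaev minimality (Lemma \ref{Lemma4.1}, Lemma \ref{Lemma5.3}) to $w_{n}$, either $w_{n}\to0$ or its energy is at least $c$, and each alternative is incompatible with $u\not\equiv0$ unless $P(u)=0$ and $I(u)=c$.

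Once $P(u)=0$ holds, we have $u\in\mathcal{M}$. Then Lemma \ref{Lemma5.3}(ii) gives $I(u)\geqslant c$, the bound above gives $I(u)=I(u)-\frac{1}{N}P(u)\leqslant c$, and so $I(u)=c$.
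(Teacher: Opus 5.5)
Your boundedness step (via $I-\frac1N P$ instead of the paper's $I-\frac1p\langle I'(u_n),u_n\rangle$) and your Euler--Lagrange step are fine. In the Poho\v{z}aev step, however, the last inequality of your chain is wrong. What you bounded by $c$ is $I(u)-\frac1N P(u)$, not $I(u)$, so the chain ends in $c+\frac{t_u^N}{N}P(u)$ rather than $c+\frac{t_u^N-1}{N}P(u)$. The corrected chain gives $P(u)\geqslant 0$ at once, and this is exactly the paper's first case. Your version carries no information: $P(u)=g_2'(1)$, with $g_2'>0$ on $(0,t_u)$ and $g_2'<0$ on $(t_u,\infty)$, so $(t_u^N-1)P(u)\geqslant 0$ holds for every $u\neq 0$. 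Your next idea, forcing $t_u=1$ through $g_3,g_4>0$, cannot succeed. When $P(u)>0$ one has $t_u>1$, and the constraints $x_1+x_2=x_3$ and $\frac1N x_1+\frac sN x_2\leqslant c\leqslant I(u(x/t_u))$ are mutually compatible. So information on the remainder $u_n-u$ is indispensable.

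Your fallback does supply it, and it is a correct route that differs from the paper's. With $w_n=u_n-u$ you get $\langle I'(w_n),w_n\rangle\to 0$ (from the Cerami condition, $I'(u)=0$ and Br\'ezis--Lieb) and $I(w_n)\to c-I(u)$. Suppose $\|w_n\|_E\geqslant\delta>0$ along a subsequence. Then $\|w_n\|_{L^p}^p\geqslant \delta^2/2$ eventually, and the Nehari projections $t_n w_n\in\mathcal N$ satisfy $t_n\to 1$. Lemma \ref{Lemma4.1}(4) then gives $\lim I(w_n)\geqslant c$, i.e.\ $I(u)\leqslant 0$, contradicting $I(u)=\frac{p-2}{2p}\|u\|_E^2>0$. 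Hence $u_n\to u$ strongly in $E$, and $P(u)=0$, $I(u)=c$ follow by continuity. Your sketch only asserts this dichotomy and the resulting contradiction, so write both out.

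The paper instead stays on the Poho\v{z}aev manifold and splits into two cases. It excludes $P(u)<0$ by scaling $u$ into $\mathcal M$ with $\bar t<1$. It excludes $P(u)>0$ by scaling the remainder $v_n$ into $\mathcal M$ with $\bar t_n\leqslant 1$ and using $I(u)-\frac1N P(u)>0$. That argument never uses $I'(u)=0$. Your route avoids the case split and produces strong convergence along the way. The price is that it relies on the equation for $u$ and on the Nehari characterization $c=\inf_{\mathcal N}I$.
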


\begin{proof}
From $\|I'(u_{n})\|_{E^{-1}}(1+\|u_{n}\|_{E})\to 0$, we have
$\|I'(u_{n})\|_{E^{-1}}\to 0$ as $n\to\infty$.
Then
\begin{equation*}
\begin{aligned}
c+o_{n}(1)=&
\frac{1}{2}\|u_{n}\|_{D^{1,2}(\mathbb{R}^{N})}^{2}
+\frac{1}{2}\|u_{n}\|_{D^{s,2}(\mathbb{R}^{N})}^{2}
-\frac{1}{p}\|u_{n}\|_{L^{p}(\mathbb{R}^{N})}^{p}\\
&-\frac{1}{p}
\left(\|u_{n}\|_{D^{1,2}(\mathbb{R}^{N})}^{2}
+\|u_{n}\|_{D^{s,2}(\mathbb{R}^{N})}^{2}
-\|u_{n}\|_{L^{p}(\mathbb{R}^{N})}^{p}\right)\\
=&
\frac{p-2}{2p}\|u_{n}\|_{E}^{2}.
\end{aligned}
\end{equation*}
This shows that $\{u_{n}\}$ is bounded in $E$.
Up to a subsequence,
there exists
$u$
such that
\begin{equation*}
\begin{aligned}
u_{n}\rightharpoonup u
\;
\mathrm{in}
~
E,
\;\;
u_{n}\rightarrow u
~
\mathrm{a.e. ~in}
~
\mathbb{R}^{N}.
\end{aligned}
\end{equation*}
From $\|I'(u_{n})\|_{E^{-1}}\to 0$, we have
$\|I'(u)\|_{E^{-1}}=0$.

For
$u\equiv0$,
we have
$P(u)=0$.

From now on, we suppose $u\not\equiv0$.
Firstly, we show
$P(u)\geqslant0$.
By the way of contradiction, we suppose that
$P(u)<0$.
In terms of Lemma
\ref{Lemma5.2},
there exists a unique
$\bar{t}>0$
such that
\begin{equation*}
\begin{aligned}
u\left(\frac{x}{\bar{t}}\right)\in\mathcal{M}
~~\mathrm{and}~~
P\left(u\left(\frac{x}{\bar{t}}\right)\right)=0.
\end{aligned}
\end{equation*}
In view of
\eqref{5.1}
and
$P(u)<0$,
we obtain
\begin{equation*}
\begin{aligned}
g_{2}'(1)=P(u)<0.
\end{aligned}
\end{equation*}
By the expression of $g_{2}'(\cdot)$,
one can choose
$t_{0}>0$ small such that
\begin{equation*}
g_{2}'(t_{0})>0.
\end{equation*}
Using the intermediate value theorem, we know there exists
$\bar{\bar{t}}\in(t_{0},1)$
such that
\begin{equation*}
g_{2}'(\bar{\bar{t}})=0.
\end{equation*}
By the uniqueness of
$\bar{t}$,
one has
\begin{equation}\label{5.6}
\begin{aligned}
\bar{t}
=
\bar{\bar{t}}
\in(t_{0},1).
\end{aligned}
\end{equation}
Making use of \eqref{5.6},
there holds
\begin{equation*}
\begin{aligned}
c
\leqslant&
I\left(u\left(\frac{x}{\bar{t}}\right)\right)
=
I\left(u\left(\frac{x}{\bar{t}}\right)\right)
-
\frac{1}{N}
P\left(u\left(\frac{x}{\bar{t}}\right)\right)\\
=&
\frac{1}{N}
\bar{t}^{N-2}
\|u\|_{D^{1,2}(\mathbb{R}^{N})}^{2}
+
\frac{s}{N}
\bar{t}^{N-2s}
\|u\|_{D^{s,2}(\mathbb{R}^{N})}^{2}
\\
<&
\frac{1}{N}
\|u\|_{D^{1,2}(\mathbb{R}^{N})}^{2}
+
\frac{s}{N}
\|u\|_{D^{s,2}(\mathbb{R}^{N})}^{2}\\
\leqslant&
\frac{1}{N}
\lim_{n\rightarrow\infty}
\|u_{n}\|_{D^{1,2}(\mathbb{R}^{N})}^{2}
+
\frac{s}{N}
\lim_{n\rightarrow\infty}
\|u_{n}\|_{D^{s,2}(\mathbb{R}^{N})}^{2}\\
=&
\lim_{n\rightarrow\infty}
\bigg[
I(u_{n})
-
\frac{1}{N}
P(u_{n})
\bigg]\\
=&
\lim_{n\rightarrow\infty}
I(u_{n})
=
c.
\end{aligned}
\end{equation*}
This is a contradiction.
Hence,
$P(u)\geqslant0$.

We now prove that
$P(u)=0$.
Suppose that
$P(u)>0$.
From
$u\not\equiv0$,
we know
\begin{equation}\label{5.7}
\begin{aligned}
I(u)
-
\frac{1}{N}
P(u)>0.
\end{aligned}
\end{equation}
Set $v_{n}=u_{n}-u$.
According to Br\'{e}zis-Lieb lemma
\cite{Brezis-Lieb1983PAMS},
we know
\begin{equation*}
c
=
\lim_{n\rightarrow\infty} I(u_{n})=\lim_{n\rightarrow\infty} I(v_{n})+I(u)
\end{equation*}
and
\begin{equation*}
0=\lim_{n\rightarrow\infty}P(u_{n})=\lim_{n\rightarrow\infty}P(v_{n})+P(u),
\end{equation*}
which further implies that:
\begin{equation}\label{5.8}
\begin{aligned}
c=&
\lim_{n\rightarrow\infty}
I(u_{n})-
\frac{1}{N}
\lim_{n\rightarrow\infty}P(u_{n})\\
=&
\lim_{n\rightarrow\infty}
\left[
I(v_{n})
-
\frac{1}{N}
P(v_{n})
\right]
+
\left[
I(u)-\frac{1}{N}P(u)
\right]
\end{aligned}
\end{equation}
and
\begin{equation*}
\lim_{n\rightarrow\infty}
P(v_{n})=-P(u)<0.
\end{equation*}
Similar to \eqref{5.6}, there exists a unique sequence $\bar{t}_{n}$ such that
\begin{equation}\label{5.9}
v_{n}\left(\frac{x}{\bar{t}_{n}}\right)\in \mathcal{M}
~~\mathrm{and}~~
P\left(v_{n}\left(\frac{x}{\bar{t}_{n}}\right)\right)=0
\ \,
\mathrm{and}
\ \,
\bar{t}_{n}\in(0,1].
\end{equation}
From
\eqref{5.7}-\eqref{5.9} and $\bar{t}_{n}\in(0,1]$,
we obtain
\begin{equation*}
\begin{aligned}
c
\leqslant&
\lim_{n\rightarrow\infty}
I\left(v_{n}\left(\frac{x}{\bar{t}_{n}}\right)\right)\\
=&
\lim_{n\rightarrow\infty}
\left[
I\left(v_{n}\left(\frac{x}{\bar{t}_{n}}\right)\right)
-
\frac{1}{N}
P\left(v_{n}\left(\frac{x}{\bar{t}_{n}}\right)\right)
\right]\\
<&
\lim_{n\rightarrow\infty}
\left[
I(v_{n})
-
\frac{1}{N}
P(v_{n})
\right]\\
<&
\lim_{n\rightarrow\infty}
\left[
I(v_{n})
-
\frac{1}{N}
P(v_{n})
\right]
+
\left[
I(u)-\frac{1}{N}P(u)
\right]\\
=&
\lim_{n\rightarrow\infty}
I(u_{n})
-
\frac{1}{N}
\lim_{n\rightarrow\infty}
P(u_{n})
=
c.
\end{aligned}
\end{equation*}
This is a contradiction. Hence, $P(u)=0$.

Moreover, if $u\not\equiv0$, then
\begin{equation*}
\begin{aligned}
c
\leqslant
I(u)
=I(u)
-\frac{1}{N}P(u)
\leqslant
\lim_{n\rightarrow\infty}I(u_{n})
-\frac{1}{N}
\lim_{n\rightarrow\infty}
P(u_{n})
=c,
\end{aligned}
\end{equation*}
which implies
$I(u)=c$.
\end{proof}

\begin{proof}[The proof of Theorem \ref{Theorem1.3}]
Let
$\{u_{n}\}$
be a bounded sequence of
$c>0$ which is in Lemma \ref{Lemma5.7}.
If $\lim\limits_{n\to\infty}\int_{\mathbb{R}^{N}}
|u_{n}|^{p}
\mathrm{d}x=0$,
then
\begin{equation*}
\begin{aligned}
c=\frac{1}{2}\lim_{n\rightarrow\infty}\|u_{n}\|_{D^{1,2}(\mathbb{R}^{N})}^{2}
+
\frac{1}{2}\lim_{n\rightarrow\infty}\|u_{n}\|_{D^{s,2}(\mathbb{R}^{N})}^{2},
\end{aligned}
\end{equation*}
and
\begin{equation*}
\begin{aligned}
0=\frac{N-2}{2}
\lim_{n\rightarrow\infty}
\|u_{n}\|_{D^{1,2}(\mathbb{R}^{N})}^{2}
+
\frac{N-2s}{2}
\lim_{n\rightarrow\infty}
\|u_{n}\|_{D^{s,2}(\mathbb{R}^{N})}^{2},
\end{aligned}
\end{equation*}
which further gives
\begin{equation*}
\begin{aligned}
0=\lim_{n\rightarrow\infty}\|u_{n}\|_{D^{1,2}(\mathbb{R}^{N})}^{2},
\end{aligned}
\end{equation*}
\begin{equation*}
\begin{aligned}
0=
\lim_{n\rightarrow\infty}
\|u_{n}\|_{D^{s,2}(\mathbb{R}^{N})}^{2},
\end{aligned}
\end{equation*}
and
\begin{equation*}
\begin{aligned}
c=0.
\end{aligned}
\end{equation*}
This is a contradiction.
Hence,
$\lim\limits_{n\to\infty}\int_{\mathbb{R}^{N}}
|u_{n}|^{p}
\mathrm{d}x>0$.

By Lemma \ref{Lemma2.5} and $\lim\limits_{n\to\infty}\int_{\mathbb{R}^{N}}
|u_{n}|^{p}
\mathrm{d}x>0$,
we know that there exists $\{x_{n}\}\subset \mathbb{R}^{N}$ such that $\{\bar{u}_{n}:=u_{n}(x+x_{n})\}$ convergence strongly and a.e. to $\bar{u}\not\equiv0$ in $L^{p}_{loc}(\mathbb{R}^{N})$.
Clearly, $\{\bar{u}_{n}\}$ satisfies
\begin{equation*}
\begin{aligned}
I(\bar{u}_{n})\to c,~~
P(\bar{u}_{n})\to 0,~~\mathrm{and}~~ \|I'(\bar{u}_{n})\|_{E^{-1}}(1+\|\bar{u}_{n}\|_{E})\to 0,\ \mathrm{as}\ n\rightarrow \infty.
\end{aligned}
\end{equation*}
Up to a subsequence,
there exists
$u$
such that
\begin{equation*}
\begin{aligned}
\bar{u}_{n}\rightharpoonup \bar{u}
\;
\mathrm{in}
~
E,
\;\;
\bar{u}_{n}\rightarrow \bar{u}
~
\mathrm{a.e. ~in}
~
\mathbb{R}^{N}.
\end{aligned}
\end{equation*}
Moreover, from Lemma \ref{Lemma5.8}, we get
\begin{equation*}
\begin{aligned}
I(\bar{u})=c,~~P(\bar{u})=0~\mathrm{and}~\|I'(\bar{u})\|_{E^{-1}}=0.
\end{aligned}
\end{equation*}
\end{proof}

\section{Proof of Theorem \ref{Theorem1.4}}
In this section, we show the best constant based on Theorems \ref{Theorem1.1}-\ref{Theorem1.3}.

From Theorem \ref{Theorem1.3},
we know that the ground sate solutions of equation \eqref{P} satisfy Pohov{z}aev identity.
Hence, equation \eqref{P} has a ground sate solution $\phi$, which satisfies $P(\phi)=0$ and $I(\phi)=c$.
\begin{lemma}\label{Lemma6.1}
Let $u$ be a weak solution of equation \eqref{P} with $P(u)=0$ and $I(u)\geqslant c$.
Then
\begin{equation}\label{6.1}
\begin{aligned}
\|u\|_{D^{1,2}(\mathbb{R}^{N})}^{2}
=
\frac{p(N-2s)-2N}{2N-p(N-2)}
\|u\|_{D^{s,2}(\mathbb{R}^{N})}^{2}
=
\frac{p(N-2s)-2N}{2p(1-s)}
\|u\|_{L^{p}(\mathbb{R}^{N})}^{p},
\end{aligned}
\end{equation}
and
\begin{equation*}
\begin{aligned}
I(u)=\frac{p-2}{2p}
\|u\|_{L^{p}(\mathbb{R}^{N})}^{p}.
\end{aligned}
\end{equation*}
Moreover, let $\phi$ be a weak solution of equation \eqref{P} with $P(\phi)=0$ and $I(\phi)=c$. Then
\begin{equation*}
\begin{aligned}
\|\phi\|_{L^{p}(\mathbb{R}^{N})}^{p}
\leqslant
\|u\|_{L^{p}(\mathbb{R}^{N})}^{p}.
\end{aligned}
\end{equation*}
\end{lemma}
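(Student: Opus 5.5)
The plan is to treat the three scalar quantities
\[
x_{1}=\|u\|_{D^{1,2}(\mathbb{R}^{N})}^{2},\qquad x_{2}=\|u\|_{D^{s,2}(\mathbb{R}^{N})}^{2},\qquad x_{3}=\|u\|_{L^{p}(\mathbb{R}^{N})}^{p}
\]
as unknowns of a linear system, in the same spirit as the proof of Lemma \ref{Lemma5.5}. Since $u$ is a weak solution, testing \eqref{P} with $u$ gives the Nehari identity $x_{1}+x_{2}=x_{3}$. The hypothesis $P(u)=0$ gives the Poho\v{z}aev identity $\frac{N-2}{2}x_{1}+\frac{N-2s}{2}x_{2}=\frac{N}{p}x_{3}$. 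These are two linear equations in three unknowns, so they determine the ratios $x_{1}:x_{2}:x_{3}$.

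First, I eliminate $x_{2}$. Multiplying the Nehari identity by $\frac{N-2s}{2}$ and subtracting the Poho\v{z}aev identity gives
\[
(1-s)x_{1}=\Big(\tfrac{N-2s}{2}-\tfrac{N}{p}\Big)x_{3}=\frac{p(N-2s)-2N}{2p}\,x_{3},
\]
which is the second equality in \eqref{6.1}. Then I eliminate $x_{1}$ in the same way: multiplying the Nehari identity by $\frac{N-2}{2}$ and subtracting the Poho\v{z}aev identity gives $(1-s)x_{2}=\frac{2N-p(N-2)}{2p}x_{3}$. Dividing the two relations yields the first equality in \eqref{6.1}. Both coefficients are positive because $p\in(2^{*}_{s},2^{*})$. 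Finally, $I(u)=\frac12(x_{1}+x_{2})-\frac1p x_{3}=\frac{p-2}{2p}x_{3}$, again using the Nehari identity.

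For the comparison claim, I apply the identity just proved to both $u$ and $\phi$:
\[
\frac{p-2}{2p}\|\phi\|_{L^{p}(\mathbb{R}^{N})}^{p}=I(\phi)=c\leqslant I(u)=\frac{p-2}{2p}\|u\|_{L^{p}(\mathbb{R}^{N})}^{p}.
\]
Since $p>2$, dividing by $\frac{p-2}{2p}>0$ gives the desired inequality. Note that $\phi$ satisfies the same hypotheses as $u$, with $I(\phi)=c$.

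There is no real obstacle here; the argument is pure algebra. The only point needing care is that the Nehari identity requires $u$ to be an actual weak solution (testing with $\varphi=u\in E$ is legitimate), and that the Poho\v{z}aev identity is assumed rather than derived, which is exactly why the hypothesis $P(u)=0$ is imposed. The assumption $I(u)\geqslant c$ is used only in the final comparison.
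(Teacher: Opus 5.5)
Your proposal is correct and follows essentially the same route as the paper. You combine the Nehari identity $\langle I'(u),u\rangle=0$ with $P(u)=0$ to fix the ratios of $\|u\|_{D^{1,2}(\mathbb{R}^{N})}^{2}$, $\|u\|_{D^{s,2}(\mathbb{R}^{N})}^{2}$ and $\|u\|_{L^{p}(\mathbb{R}^{N})}^{p}$, read off $I(u)=\frac{p-2}{2p}\|u\|_{L^{p}(\mathbb{R}^{N})}^{p}$, and compare with $\phi$ via $I(\phi)=c\leqslant I(u)$. The only cosmetic difference is that the paper eliminates the $L^{p}$ term first to get the first equality of \eqref{6.1} directly, whereas you eliminate the two seminorms in turn; obtaining the first equality by substitution rather than division also avoids any worry about $x_{2}=0$.
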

\begin{proof}
It follows from $\langle I'(u),u\rangle=0$ and $P(u)=0$ that
\begin{equation*}
\begin{aligned}
\|u\|_{D^{1,2}(\mathbb{R}^{N})}^{2}
+
\|u\|_{D^{s,2}(\mathbb{R}^{N})}^{2}
-
\|u\|_{L^{p}(\mathbb{R}^{N})}^{p}
=0
\end{aligned}
\end{equation*}
and
\begin{equation*}
\begin{aligned}
\frac{N-2}{2}
\|u\|^{2}_{D^{1,2}(\mathbb{R}^{N})}
+
\frac{N-2s}{2}
\|u\|^{2}_{D^{s,2}(\mathbb{R}^{N})}
=\frac{N}{p}\|u\|_{L^{p}(\mathbb{R}^{N})}^{p}.
\end{aligned}
\end{equation*}
Then
\begin{equation}\label{6.2}
\begin{aligned}
\left(
\frac{N-2}{2}
-\frac{N}{p}
\right)
\|u\|^{2}_{D^{1,2}(\mathbb{R}^{N})}
+
\left(
\frac{N-2s}{2}
-\frac{N}{p}
\right)
\|u\|^{2}_{D^{s,2}(\mathbb{R}^{N})}
=0.
\end{aligned}
\end{equation}
This is the first equation of \eqref{6.1}.
Putting this result back, we get the second equation of \eqref{6.1}.
Furthermore, one deduces
\begin{equation*}
\begin{aligned}
I(u)
=&
\frac{1}{2}
\|u\|_{D^{1,2}(\mathbb{R}^{N})}^{2}
+
\frac{1}{2}
\|u\|_{D^{s,2}(\mathbb{R}^{N})}^{2}
-
\frac{1}{p}
\|u\|_{L^{p}(\mathbb{R}^{N})}^{p}\\
=&
\frac{p-2}{2p}
\int_{\mathbb{R}^{N}}
|u|^{p}
\mathrm{d}x.
\end{aligned}
\end{equation*}
Then
\begin{equation*}
\begin{aligned}
\frac{p-2}{2p}
\|\phi\|_{L^{p}(\mathbb{R}^{N})}^{p}
=
I(\phi)=c
\leqslant
I(u)
=
\frac{p-2}{2p}
\|u\|_{L^{p}(\mathbb{R}^{N})}^{p},
\end{aligned}
\end{equation*}
which gives
\begin{equation*}
\begin{aligned}
\|\phi\|_{L^{p}(\mathbb{R}^{N})}^{p}
\leqslant
\|u\|_{L^{p}(\mathbb{R}^{N})}^{p}.
\end{aligned}
\end{equation*}
\end{proof}

\begin{lemma}\label{Lemma6.2}
Let $\phi$ be a weak solution of equation \eqref{P} with $P(\phi)=0$ and $I(\phi)=c$.
Then
\begin{equation*}
\begin{aligned}
C_{N,p,s}^{-1}\leqslant
\left(
\frac{2N-p(N-2)}{2p(1-s)}
\right)
^{\frac{2N-p(N-2)}{4(1-s)}}
\left(
\frac{p(N-2s)-2N}{2p(1-s)}
\right)
^{\frac{p(N-2s)-2N}{4(1-s)}}
\|\phi\|_{L^{p}(\mathbb{R}^{N})}^{\frac{p(p-2)}{p}}.
\end{aligned}
\end{equation*}
\end{lemma}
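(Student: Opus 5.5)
The plan is to plug $\phi$ itself into the Weinstein functional. Since $C_{N,p,s}^{-1}=\inf_{u\in E\setminus\{0\}}W(u)$ and $\phi\in E\setminus\{0\}$, we have $C_{N,p,s}^{-1}\leqslant W(\phi)$. So it suffices to compute $W(\phi)$ explicitly and check that it equals the right-hand side. (The exponent printed as $\frac{p(p-2)}{p}$ should match the $(\int|\phi|^p)^{\frac{p-2}{2}}$ form of Lemma \ref{Lemma3.5}, i.e. $\|\phi\|_{L^p}^{\frac{p(p-2)}{2}}$. I will aim for that form.)

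\textbf{Step 1: express the two seminorms through the $L^p$ norm.} Since $\phi$ is a weak solution with $P(\phi)=0$ and $I(\phi)=c$, Lemma \ref{Lemma6.1} applies with $u=\phi$ and gives \eqref{6.1}. Write $a=\frac{2N-p(N-2)}{2p(1-s)}$ and $b=\frac{p(N-2s)-2N}{2p(1-s)}$, and note $a+b=1$. Then \eqref{6.1} yields
$$\|\phi\|_{D^{1,2}(\mathbb{R}^N)}^2=b\,\|\phi\|_{L^p(\mathbb{R}^N)}^p, \qquad \|\phi\|_{D^{s,2}(\mathbb{R}^N)}^2=\frac{2N-p(N-2)}{p(N-2s)-2N}\,b\,\|\phi\|_{L^p(\mathbb{R}^N)}^p=a\,\|\phi\|_{L^p(\mathbb{R}^N)}^p.$$
Both coefficients are positive because $p\in(2_s^*,2^*)$. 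This also shows that neither seminorm vanishes, so $W(\phi)$ is well defined.

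\textbf{Step 2: substitute into $W$.} Substituting into the definition of $W$ gives
$$W(\phi)=a^{\frac{2N-p(N-2)}{4(1-s)}}\,b^{\frac{p(N-2s)-2N}{4(1-s)}}\,\|\phi\|_{L^p}^{p\left(\frac{2N-p(N-2)}{4(1-s)}+\frac{p(N-2s)-2N}{4(1-s)}-1\right)}.$$
The exponent sum is $\frac{2p-2ps}{4(1-s)}=\frac p2$. The power of $\|\phi\|_{L^p}$ is therefore $p(\frac p2-1)=\frac{p(p-2)}{2}$. This is exactly the computation already carried out for $Q$ in Lemma \ref{Lemma3.5}, so I can cite that algebra verbatim.

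\textbf{Main point to check.} There is no real obstacle here, since everything reduces to Lemma \ref{Lemma6.1} plus arithmetic. The only thing to verify carefully is that the coefficient in \eqref{6.1} linking $D^{s,2}$ to $L^p$ is indeed $a$. From \eqref{6.2} together with the Nehari identity $\|\phi\|_{D^{1,2}}^2+\|\phi\|_{D^{s,2}}^2=\|\phi\|_{L^p}^p$, the two coefficients must sum to $1$, and this confirms it.
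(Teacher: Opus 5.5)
Your proposal is correct and follows the paper's own proof: you bound $C_{N,p,s}^{-1}$ by $W(\phi)$ and evaluate $W(\phi)$ using \eqref{6.1}, where the exponents sum to $\frac{p}{2}$. You are also right that the printed exponent $\frac{p(p-2)}{p}$ is a typo for $\frac{p(p-2)}{2}$; that is what the computation gives, and it is the form used in \eqref{1.7} and Theorem \ref{Theorem1.4}.
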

\begin{proof}
It follows from \eqref{6.1} that
\begin{equation*}
\begin{aligned}
C_{N,p,s}^{-1}\leqslant& W(\phi)\\
=&
\frac
{\left(
\|\phi\|_{D^{s,2}(\mathbb{R}^{N})}^{2}
\right)
^{\frac{2N-p(N-2)}{4(1-s)}}
\left(
\|\phi\|_{D^{1,2}(\mathbb{R}^{N})}^{2}
\right)
^{\frac{p(N-2s)-2N}{4(1-s)}}}
{\int_{\mathbb{R}^{N}}
|\phi|^{p}
\mathrm{d}x}\\
=&\left(
\frac{2N-p(N-2)}{2p(1-s)}
\right)
^{\frac{2N-p(N-2)}{4(1-s)}}
\left(
\frac{p(N-2s)-2N}{2p(1-s)}
\right)
^{\frac{p(N-2s)-2N}{4(1-s)}}
\|\phi\|_{L^{p}(\mathbb{R}^{N})}^{\frac{p(p-2)}{p}}.
\end{aligned}
\end{equation*}
\end{proof}

\begin{proof}[Proof of Theorem \ref{Theorem1.4}]
It follows from Theorem \ref{Theorem1.1} that
\begin{equation*}
\begin{aligned}
I(Q)\geqslant c,
~~\langle I'(Q),Q\rangle=0, ~~\mathrm{and}~~P(Q)=0.
\end{aligned}
\end{equation*}
From  Theorem \ref{Theorem1.1} and Lemma \ref{Lemma6.1} that
\begin{equation*}
\begin{aligned}
C_{N,p,s}^{-1}
=&
\left(
\frac{2N-p(N-2)}{2p(1-s)}
\right)
^{\frac{2N-p(N-2)}{4(1-s)}}
\left(
\frac{p(N-2s)-2N}{2p(1-s)}
\right)
^{\frac{p(N-2s)-2N}{4(1-s)}}
\|Q\|_{L^{p}(\mathbb{R}^{N})}^{\frac{p(p-2)}{p}}\\
\geqslant&
\left(
\frac{2N-p(N-2)}{2p(1-s)}
\right)
^{\frac{2N-p(N-2)}{4(1-s)}}
\left(
\frac{p(N-2s)-2N}{2p(1-s)}
\right)
^{\frac{p(N-2s)-2N}{4(1-s)}}
\|\phi\|_{L^{p}(\mathbb{R}^{N})}^{\frac{p(p-2)}{p}}.
\end{aligned}
\end{equation*}
On the other hand, by using Lemma \ref{Lemma6.2}, one has
\begin{equation*}
\begin{aligned}
C_{N,p,s}^{-1}
\leqslant
\left(
\frac{2N-p(N-2)}{2p(1-s)}
\right)
^{\frac{2N-p(N-2)}{4(1-s)}}
\left(
\frac{p(N-2s)-2N}{2p(1-s)}
\right)
^{\frac{p(N-2s)-2N}{4(1-s)}}
\|\phi\|_{L^{p}(\mathbb{R}^{N})}^{\frac{p(p-2)}{p}}.
\end{aligned}
\end{equation*}
Hence $\|Q\|_{L^{p}(\mathbb{R}^{N})}=\|\phi\|_{L^{p}(\mathbb{R}^{N})}$, and
\begin{equation*}
\begin{aligned}
C_{N,p,s}^{-1}
=
\left(
\frac{2N-p(N-2)}{2p(1-s)}
\right)
^{\frac{2N-p(N-2)}{4(1-s)}}
\left(
\frac{p(N-2s)-2N}{2p(1-s)}
\right)
^{\frac{p(N-2s)-2N}{4(1-s)}}
\|\phi\|_{L^{p}(\mathbb{R}^{N})}^{\frac{p(p-2)}{p}}.
\end{aligned}
\end{equation*}
Moreover, one deduces that $I(Q)=c$.

Applying Lemma \ref{Lemma6.1} again, we know
\begin{equation*}
\begin{aligned}
c
=&I(\phi)\\
=&\frac{p-2}{2p}\int_{\mathbb{R}^{N}}|\phi|^{p}\mathrm{d}x,
\end{aligned}
\end{equation*}
which shows that
\begin{equation*}
\begin{aligned}
\|\phi\|_{L^{p}(\mathbb{R}^{N})}
=\left[
\frac{2p}{p-2}
c
\right]^{\frac{1}{p}}.
\end{aligned}
\end{equation*}
Then
\begin{equation*}
\begin{aligned}
C_{N,p,s}^{-1}
=\left(
\frac{2N-p(N-2)}{2p(1-s)}
\right)
^{\frac{2N-p(N-2)}{4(1-s)}}
\left(
\frac{p(N-2s)-2N}{2p(1-s)}
\right)
^{\frac{p(N-2s)-2N}{4(1-s)}}
\left[
\frac{2p}{p-2}
c
\right]^{\frac{p-2}{2}}.
\end{aligned}
\end{equation*}
\end{proof}

\section*{Appendix A}
In this appendix, we prove the details of Lemma \ref{Lemma4.1}.
\begin{proof}[Proof of Lemma \ref{Lemma4.1}]
(1) Using Lemma \ref{Lemma2.2}, one has
\begin{equation*}
\begin{aligned}
I(u)
\geqslant \|u\|_{E}^{2}
-C\|u\|_{E}^{p}.
\end{aligned}
\end{equation*}
We keep in mind that $2<p$. Then there exists a sufficiently small positive number $\rho$ such that
\begin{equation*}
\begin{aligned}
\varsigma :=
\inf_{\|u\|_{E}=\rho}
I(u)>0=I(0).
\end{aligned}
\end{equation*}
For $u\in E\setminus\{0\}$, we have
\begin{equation*}
\begin{aligned}
I(tu)
=
\frac{t^{2}}{2}
\|u\|_{E}^{2}
-\frac{t^{p}}{p}
\int_{\mathbb{R}^{N}}|u|^{p}\mathrm{d}x.
\end{aligned}
\end{equation*}
From $2<p$, it follows that $I(tu)<0$ for $t$ large enough.

From above,
we can choose $t_{1}>0$ corresponding to $u$ such that $I(t_{1}u)<0$ for $t>t_{1}$ and $\|t_{1}u\|_{E}>\rho$.

(2)
For any $u\in E\setminus\{0\}$ and $t\in (0,\infty)$, we define
\begin{equation*}
\begin{aligned}
f_{1}(t)=I(tu)=&\frac{t^{2}}{2}\|u\|_{E}^{2}-\frac{t^{p}}{p}\int_{\mathbb{R}^{N}}|u|^{p}\mathrm{d}x.
\end{aligned}
\end{equation*}
We compute
\begin{equation*}
\begin{aligned}
f_{1}'(t)
=&t\|u\|_{E}^{2}
-t^{p-1}
\int_{\mathbb{R}^{N}}|u|^{p}\mathrm{d}x.
\end{aligned}
\end{equation*}
We know that $f_{1}'(\cdot)=0$ iff
\begin{equation*}
\begin{aligned}
\|u\|_{E}^{2}
=t^{p-2}
\int_{\mathbb{R}^{N}}|u|^{p}\mathrm{d}x.
\end{aligned}
\end{equation*}
Let
\begin{equation*}
\begin{aligned}
f_{2}(t)
=t^{p-2}
\int_{\mathbb{R}^{N}}|u|^{p}\mathrm{d}x.
\end{aligned}
\end{equation*}
Clearly, $\lim\limits_{t\rightarrow0}f_{2}(t)\rightarrow0$, $\lim\limits_{t\rightarrow+\infty}f_{2}(t)\rightarrow+\infty$ .
By using the intermediate value theorem, we know that there exists a $0<t_{u}<\infty$ such that
\begin{equation*}
\begin{aligned}
f_{2}(t_{u})=\|u\|_{E}^{2}.
\end{aligned}
\end{equation*}
Furthermore, it is easy to see that $f_{2}(\cdot)$ is strictly increasing on $(0,\infty)$.
Then we get the uniqueness of $t_{u}$.
And then,
\begin{equation*}
\begin{aligned}
\|u\|_{E}^{2}
=t_{u}^{p-2}
\int_{\mathbb{R}^{N}}|u|^{p}\mathrm{d}x,
\end{aligned}
\end{equation*}
which gives
\begin{equation*}
\begin{aligned}
\|t_{u}u\|_{E}^{2}
=\beta\int_{\mathbb{R}^{N}}|t_{u}u|^{p}\mathrm{d}x
+\int_{\mathbb{R}^{N}}|t_{u}u|^{2^{*}}\mathrm{d}x.
\end{aligned}
\end{equation*}
This implies that $t_{u}u\in \mathcal{N}$.

(3)
By applying $\langle I'(u),u\rangle=0$, we know
\begin{equation*}
\begin{aligned}
0
=\langle I'(u),u\rangle\geqslant \|u\|_{E}^{2}
-C\|u\|_{E}^{p},
\end{aligned}
\end{equation*}
which implies
\begin{equation*}
\begin{aligned}
C\|u\|_{E}^{p-2}
\geqslant
1,
\end{aligned}
\end{equation*}
and
\begin{equation*}
\begin{aligned}
\|u\|_{E}^{2}
\geqslant C.
\end{aligned}
\end{equation*}
Then, for $u\in \mathcal{N}$, we get
\begin{equation*}\label{36}
\begin{aligned}
I(u)
=&
I(u)-\frac{1}{p}\langle I'(u),u\rangle\\
=&\frac{1}{2}\|u\|_{E}^{2}
-\beta\frac{1}{p}\int_{\mathbb{R}^{N}}|u|^{p}\mathrm{d}x
-\frac{1}{p}
\left(
\|u\|_{E}^{2}
-\beta\int_{\mathbb{R}^{N}}|u|^{p}\mathrm{d}x
\right)\\
=&
\left(\frac{1}{2}-\frac{1}{ p}\right)
\|u\|_{E}^{2}
\geqslant
C.
\end{aligned}
\end{equation*}
Hence, $I$ is bounded from below on $\mathcal{N}$. And then $\bar{c}>0$.

(4)
By virtue of Lemma \ref{Lemma4.1} (2), we have the following result directly
\begin{equation*}
\begin{aligned}
\bar{c}=\bar{\bar{c}}.
\end{aligned}
\end{equation*}
For any $u\in E\setminus\{0\}$,
there exists some $\tilde{t}>0$ large,
such that $I(\tilde{t}u)<0$.
Define a path $\gamma:[0, 1]\to E$ by $\gamma(t) = t\tilde{t}u$.
Clearly, $\gamma\in\Gamma$ and
\begin{equation*}
\begin{aligned}
c\leqslant \bar{\bar{c}}.
\end{aligned}
\end{equation*}
On the other hand, for each path $\gamma\in\Gamma$,
let $g(t):=\langle I'(\gamma(t)),\gamma(t)\rangle$.
Then, $g(0)=0$ and $g(t)>0$ for $t>0$ small. A direct calculation gives
\begin{equation*}
\begin{aligned}
I(\gamma(1))
-\frac{1}{p}\langle I'(\gamma(1)),\gamma(1)\rangle
\geqslant\left(\frac{1}{2}-\frac{1}{p}\right)\|\gamma(1)\|_{E}^{2}
\geqslant 0,
\end{aligned}
\end{equation*}
which implies
\begin{equation*}
\begin{aligned}
\langle I'(\gamma(1)),\gamma(1)\rangle
\leqslant p\cdot I(\gamma(1))
=p\cdot I(\tilde{t}u)<0.
\end{aligned}
\end{equation*}
Thus, there exists $\tilde{\tilde{t}} \in(0,1)$ such that $g(\tilde{\tilde{t}}) = 0$, i.e. $\gamma(\tilde{\tilde{t}})\in \mathcal{N}$ and $c\geqslant \bar{c}$.
This deduces
$c=\bar{c}=\bar{\bar{c}}$.

(5)
For $u\in \mathcal{N}$, it follows from \eqref{4.1} and \eqref{4.2} that
\begin{equation*}
\begin{aligned}
\langle\Phi'(u),u\rangle
=&\langle\Phi'(u),u\rangle-p\Phi(u)\\
=&\left(2\|u\|_{E}^{2}-p\int_{\mathbb{R}^{N}}|u|^{p}\mathrm{d}x\right)
-p\left(\|u\|_{E}^{2}
-\int_{\mathbb{R}^{N}}|u|^{p}\mathrm{d}x\right)\\
=&(2-p)\|u\|_{E}^{2}
<0.
\end{aligned}
\end{equation*}
Thus, $\Phi'(u)\not=0$ for $u\in \mathcal{N}$.

If $u\in \mathcal{N}$ and $I(u)=\bar{c}$,
from $\bar{c}$ is the minimum of $I$ on $\mathcal{N}$,
and then using the Lagrange multiplier theorem, we know that there exists $\lambda\in \mathbb{R}$ such that $I'(u)=\lambda \Phi'(u)$. So
\begin{equation*}
\begin{aligned}
\langle \lambda \Phi'(u),u\rangle =\langle I'(u),u\rangle=\Phi(u)=0.
\end{aligned}
\end{equation*}
This shows $\lambda=0$ and $I'(u)=0$. Thus, $u$ is a ground state solution for equation \eqref{P}.
\end{proof}

\section*{Acknowledgment}
Yu Su is supported by the Natural Science Research Project of Anhui Educational Committee (2023AH040155).

\section*{Data Availability}
Data sharing is not applicable to this article as no datasets were generated or analysed during the current study.

\section*{Conflict of interest}
On behalf of all authors,
the corresponding author states that there is no conflict of interest.

\end{document}